 \newtheorem{thm}{Theorem}[section]
 \newtheorem{cor}[thm]{Corollary}
 \newtheorem{lem}[thm]{Lemma}
 \theoremstyle{definition}
 \newtheorem{defn}[thm]{Definition}
 \theoremstyle{rem}
 \newtheorem{rem}[thm]{Remark}
 \numberwithin{equation}{section}
\newcommand{\half}{\frac{1}{2}}
\newcommand{\ene}{\mathbb{N}}
\newcommand{\ce}{\mathbb{C}}
\newcommand{\bi}{\begin{itemize}}
\newcommand{\ei}{\end{itemize}}
\newcommand{\be}{\begin{enumerate}}
\newcommand{\ee}{\end{enumerate}}
\newcommand{\beq}{\begin{equation}}
\newcommand{\eq}{\end{equation}}
\def\p#1{{\left({#1}\right)}}
\def\Hcal{{\mathcal H}}
\DeclareMathOperator{\Tr}{Tr}
\def\N{{{\mathbb N}}}
\def\SU2{{{\rm SU(2)}}}
\def\lapsu2{{{\mathcal L}_\SU2}}
\begin{document}
\setcounter{page}{1}

\title[Dixmier traces,  residues and determinants on compact Lie groups]{Dixmier traces, Wodzicki residues, and determinants on compact Lie groups: the paradigm of the global quantisation}

\author[D. Cardona]{Duv\'an Cardona}
\address{
  Duv\'an Cardona:
  \endgraf
  Department of Mathematics: Analysis, Logic and Discrete Mathematics
  \endgraf
  Ghent University, Belgium
  \endgraf
  {\it E-mail address} {\rm duvanc306@gmail.com, duvan.cardonasanchez@ugent.be}
  }
  
  \author[J. Delgado]{Julio Delgado}
\address{
  Julio Delgado:
  \endgraf
  Departmento de Matem\'aticas
  \endgraf
  Universidad del Valle
  \endgraf
  Cali-Colombia
    \endgraf
    {\it E-mail address} {\rm delgado.julio@correounivalle.edu.co}
  }

\author[M. Ruzhansky]{Michael Ruzhansky}
\address{
  Michael Ruzhansky:
  \endgraf
  Department of Mathematics: Analysis, Logic and Discrete Mathematics
  \endgraf
  Ghent University, Belgium
  \endgraf
 and
  \endgraf
  School of Mathematical Sciences
  \endgraf
  Queen Mary University of London
  \endgraf
  United Kingdom
  \endgraf
  {\it E-mail address} {\rm michael.ruzhansky@ugent.be, m.ruzhansky@qmul.ac.uk}
  }

\thanks{The authors are supported  by the FWO  Odysseus  1  grant  G.0H94.18N:  Analysis  and  Partial Differential Equations and by the Methusalem programme of the Ghent University Special Research Fund (BOF)
(Grant number 01M01021). Julio Delgado is also supported by Vice. Inv. Universidad del Valle Grant CI71281. Michael Ruzhansky is also supported  by EPSRC grant 
EP/R003025/2.
}

     \keywords{Dixmier trace, trace formula, determinants, pseudo-differential operators}
     \subjclass[2020]{43A15, 43A22; Secondary 22E25, 43A80}

\begin{abstract} By  following  the paradigm of the global quantisation, instead of the analysis under changes of coordinates,
in this work we establish a global analysis for the explicit computation of the  Dixmier trace and the Wodzicki residue of (elliptic and subelliptic) pseudo-differential operators on compact Lie groups.  The regularised determinant for the Dixmier trace is also computed. We obtain  these formulae in terms of the global symbol of the corresponding operators. In particular, our  approach links the Dixmier trace and Wodzicki residue to the representation theory of the group.  Although we start by analysing the case of compact Lie groups, we also compute  the Dixmier trace and its regularised determinant  on arbitrary closed manifolds $M$, for the class of  invariant pseudo-differential operators in terms of their matrix-valued symbols. This analysis includes e.g. the family of positive and elliptic pseudo-differential operators on $M$.
\end{abstract} 

\maketitle

\tableofcontents
\allowdisplaybreaks

\section{Introduction}
\subsection{Outline} In this work we compute the  Wodzicki-Guillemin residue  and the Dixmier trace of  pseudo-differential operators on compact Lie groups in terms of their global symbols, or equivalently, in terms of the representation theory of these groups. We prove  our geometric/spectral formulae in the context of pseudo-differential operators associated to Riemannian  and sub-Riemannian structures on compact Lie groups \cite{Ruz,CR20}. The validity of such  results can be conjectured from the  techniques  developed in \cite{CPN,PS07} where the Wodzicki residue was studied by using a twisted version of  the global-symbolic calculus on the  torus developed by the third author with Turunen \cite{Ruz}. See also \cite{cdc20}. Indeed, the theory in \cite{Ruz}, can be understood as a discretisation of the algebraic formalism by Connes and Moscovici \cite{CM95} in the setting of Lie groups,  making that theory suitable for the global analysis of  residues, determinants and traces.

As it was pointed out by Connes (cf. \cite{Connes94}), in algebraic quantum field theory, in order to write down an action in operator language, a functional that replaces integration is required.  For the Yang-Mills theory, that is the Dixmier trace $\textnormal{\bf Tr}_{\omega}$, which in contrast to the usual trace, vanishes on the ideal of trace class operators, and  is defined in  the ideal $\mathcal{L}^{(1,\infty)}$ of the compact operators $T$ such that the partial sums of  their singular values diverge logarithmically. On the other hand, the Wodzicki (or non-commutative) residue, is the only extension of the Dixmier trace on  the algebra of  classical  pseudo-differential operators, in view of a fundamental result due to Connes \cite{Connes94}.   These two traces, are of relevant interests in several branches of  mathematics such as differential geometry, spectral theory, mathematical physics, and  non-commutative geometry (c.f.  \cite{Connes94}).  A reason for this is that they  allow to study specific topological/geometric invariants, where very specific elliptic operators --such as the Dirac operator-- have been shown to contain very important information relevant for classification problems, see Lesch \cite{Lesch}, Paycha \cite{Paycha}, Sukochev and Usachev \cite{Sukochev}, Scott \cite{Scott}, Grubb and Schrohe \cite{GrubbSchrohe}, Fedosov, Golse, Leichtnam, and Schrohe \cite{FedosovGolseLeichtnam}  and references therein, just to mention a few.

According to the theory  developed by H\"ormander \cite{Hormander1985III}, to any pseudo-differential operator $A$ acting on $C^{\infty}(M),$ with $M$ a  closed manifold $M$  (i.e. compact and without boundary) one can associate a symbol, defined by local coordinate systems, which is a function on the cotangent bundle $T^*M.$ However, some information about the operator (such as its lower terms) can be lost from the geometric point of view.  On the other hand, it was described by the third author and Turunen in \cite{Ruz}, that if $M=G$ is a compact Lie group, one can give a global definition of symbol for the H\"ormander classes, and that these symbols can be realised as matrix-valued functions of the phase space $G\times \widehat{G},$ where $\widehat{G}$ is the unitary dual of $G$, that is the set of  equivalence classes of continuous irreducible unitary representations on the group.

In this work we address the following problems:
(1) to compute the Dixmier trace and the regularised determinant associated with it, for  pseudo-differential operators when $M=G$ is a compact Lie group. The reason of considering the case of a compact Lie group is to exploit the symmetries of the group as in \cite{Ruz}. Also, by following \cite{CR20}, we also consider the problem of computing the Dixmier trace of subelliptic pseudo-differential operators. 
(2) To compute the Wodzicki residue of (elliptic and subelliptic) pseudo-differential operators on a compact Lie group $G$ in terms of their matrix-valued  symbols or equivalently, in terms of the representation theory of the group, and (3) to compute the regularised determinant for the Dixmier trace in the case of invariant pseudo-differential operators on a compact manifold $M$ in terms of their matrix-valued symbols, by following the formalism developed in \cite{dr14a:fsymbsch}, where the global symbols are defined in terms of a global Fourier analysis on  $M$  and  covers the case of elliptic and positive pseudo-differential operators on $M.$

\subsection{Main results}
 For simplicity, we will discuss briefly our main results in the setting of positive (elliptic or subelliptic) pseudo-differential operators. However, we present  the general situation without positivity assumptions in Section \ref{SectioDixmier}.    
 
 By following H\"ormander \cite{Hormander1985III},  a classical  pseudo-differential operator $A$ on a closed manifold $M$ of dimension $n,$ has a symbol $$\sigma^A(x,\xi)\sim \sum_{j=0}^{\infty}\sigma^A_{m-j}(x,\xi),\,\,(x,\xi)\in T^*M,$$ defined by localisations,  and 
the Wodzicki residue, which measures  the locality of the operator, is given by 
\begin{equation}\label{res}
\textnormal{res}\,(A)=\frac{1}{n(2\pi)^n}\int\limits_{M}\int\limits_{| \xi|=1}\sigma^A_{-n}(x,\xi)\, d\xi\,dx.
\end{equation}By a fundamental theorem of Connes \cite{Connes94}, if $A$ has order $-n,$ then $\textnormal{res}\,(A)$ coincides with the Dixmier trace of $A$, that is: 
\begin{equation*}
\boxed{{    \textnormal{ Spectral (global) information }=\textnormal{\bf Tr}_{\omega}(A)=\textnormal{res}\,(A)=  \textnormal{ Geometric (local) information}}}
\end{equation*}
Now, if $M$ has symmetries, for instance if $M$ is the torus, $M=\textnormal{SU}(2),$ or $M=G$ is an arbitrary compact Lie group, the work of the third author and Turunen \cite{Ruz} shown that for describing the  H\"ormander classes in \cite{Hormander1985III}, is enough to consider the representation theory of $G.$ Then, a natural question is  if the Wodzicki residue can be computed from this algebraic formalism, or equivalently:  
\begin{equation}\label{question}
\boxed{{ \textnormal{is }  \textnormal{res}\,(A) \,\, \textnormal{ information encoded  by the representation  theory of } G\textnormal{?}}}
\end{equation}
In this work we answer \eqref{question} in the affirmative, by proving the validity of  the formula
\begin{equation}\label{res:intro}
    \textnormal{res}(A)=\int\limits_{G}\Vert \sigma_{-n}(x,[\xi]) \Vert_{\mathcal{L}^{(1,\infty)}(\widehat{G})}dx,
\end{equation}
for a positive pseudo-differential operator $A$ of order $-n,$ where  the full symbol $\sigma_{-n}(x,[\xi]),$ $(x,[\xi])\in G\times \widehat{G},$ is the component of order $-n,$ of the matrix-valued symbol $\sigma_A(x,[\xi])$ of $A,$ which is a global mapping on the non-commutative phase space $G\times \widehat{G}$. Here, the $\mathcal{L}^{(1,\infty)}$-norm of the symbol is given by
\begin{equation*}
    \Vert\sigma_{-n}(x,[\xi])\Vert_{\mathcal{L}^{(1,\infty)}(\widehat{G})}:=\lim_{N\rightarrow\infty}\frac{1}{\log N}\sum_{[\xi]:\langle \xi\rangle\leq N}d_{\xi}\textnormal{{Tr}}(|\sigma_{-n}(x,[\xi])|),
\end{equation*}
where the sequence $\langle \xi\rangle, $ $[\xi]\in \widehat{G},$ is formed by the eigenvalues of the operator $(1+\mathcal{L}_G)^{\frac{1}{2}},$ with $\mathcal{L}_G=-\sum_{i=1}^nX_j^2$ being the positive Laplacian on $G,$ see Remark \ref{LaplaceDefi} for details.
Moreover, we compute the Wodzicki residue  for classical pseudo-differential operators in Corollary \ref{WRGroup:2} without the assumption of the positivity of the operator. 
 
The residue formula will be obtained for classical operators as a consequence of  a more general formula (see Corollary \ref{thmmLieGroup:2:2}) that we prove in the setting of subelliptic operators on compact Lie groups \cite{CR20}. Indeed,   in Corollary \ref{Cor:Lidskii}, for a positive subelliptic operator $A,$  we prove the formula
\begin{equation}\label{formulaLidskii2}
    \textnormal{\bf{Tr}}_{\omega}(A)=\int\limits_{G}\Vert \sigma_{-Q}(x,[\xi]) \Vert_{\mathcal{L}^{(1,\infty)}(\widehat{G})}dx=\lim_{N\rightarrow \infty} \frac{1}{\log(N)}\sum_{j=1}^{N}s_{j}(A),
\end{equation}
where $s_{j}(A)$ denotes the sequence of singular values of $A$ with multiplicities taken into account. We refer the reader to Theorem \ref{Residue:Thm} for the Dixmier trace formula  of $A$   without the positivity assumption. In \eqref{formulaLidskii2}, $Q$ denotes the Hausdorff dimension associated to a positive sub-Laplacian $\mathcal{L}=-\sum_{j=1}^kX_j^2,$ associated to a H\"ormander system of vector-fields $\{X_j\}_{j=1}^k$ (see Subsection \ref{Sub:Hor:Cl} for details), and $A$ is a positive operator in the H\"ormander classes associated to  $\mathcal{L},$ see Subsection \ref{Sub:Hor:Cl} or \cite{CR20} for details. Let us remark that \eqref{formulaLidskii2} is a Lidskii type formula, that  relates the Dixmier trace with the eigenvalues of the operator.  An analogous of the Lidskii formula can be obtained in the setting of Dixmier traces following the recent work of   Sukochev,  Zanin et al \cite{GHm:a1}, \cite{suk:a1}, \cite{suk:a2},   \cite{suk:a3}, \cite{suk:a4}.

We conclude the introduction by focusing our attention on two aspects of the analysis on Lie groups that are regarded as quite far apart, and that are related with the validity of \eqref{res:intro}.  One comes from the theories in \cite{Ruz,CR20}
which show that the analysis of pseudo-differential operators using global symbols can be effectively treated when working on compact Lie groups. Indeed,  different  techniques regarding the representation theory of a compact Lie group (where the unitary dual is a discrete set) allow to use generalisations of the Fourier analysis on the torus, or on $\textnormal{SU(2)}.$ In this framework Lidskii type formulae are used to localise the spectrum of an operator and to establish trace formulae in terms of the matrix-valued symbol of an operator (c.f. \cite{del:trace,del:tracetop,dr13a:nuclp,dr:suffkernel,dr13:schatten,dr14a:fsymbsch}). 
Another aspect,  from the point of view of the twisted versions of the periodic analysis   in \cite{Ruz}, or equivalently,  from the twisted structures arising in the non-commutative geometry, e.g.  by deformation of the torus, or by deformation of $\textnormal{SU}(2),$ shows that one can  generalise Wodzicki type results as it was firstly  observed in \cite{CPN} using the notion of a global symbol (with predecessor works on the subject as in  \cite{F}).  Extension of 
the Dixmier trace  in a  more general setting of Banach spaces is available from the works of  Pietsch \cite{Piet:di4}.  For a recent survey on the theory of traces and their history we refer the reader to Pietsch \cite{Piet:b2}. \\

The manuscript is organised as follows: In Section \ref{prel} we first recall basic notions of pseudodifferential calculus on compact Lie groups, subelliptic classes,  determinants on Hilbert spaces and the Dixmier ideal. In Section \ref{SectioDixmier} we study Dixmier traces and regularised determinants. We present there the main results of this work. For instance,   we prove the validity of \eqref{res:intro} in its more general form  for subelliptic operators in Theorem \ref{Residue:Thm}, and consequently for classical elliptic pseudo-differential operators in Corollary \ref{WRGroup:3:3}. In Corollary \ref{WRGroup:2} we compute a formula for the Wodzicki residue of elliptic and classical pseudo-differential operators in terms of their global symbols. Finally, in Section \ref{fremarks} we give some final remarks about the regularised determinant for the Dixmier trace on arbitrary compact manifolds. 

\section{Preliminaries}\label{prel}
In this section we recall some basics of the pseudodifferential calculus on compact Lie groups, the subelliptic H\"ormander classes recently introduced in \cite{CR20}, determinants on Hilbert spaces and the Dixmier ideal.
\subsection{Global H\"ormander classes of pseudo-differential operators}\label{Global:operators:Sm:rho:delta}
First, let us record the notion of the unitary dual $\widehat{G}$ of a compact Lie group $G.$ So,  
let us assume that $\xi$ is a continuous, unitary and irreducible  representation of $G,$ that is
\begin{itemize}
    \item $\xi\in \textnormal{Hom}(G, \textnormal{U}(H_{\xi})),$ for some finite-dimensional vector space $H_\xi\cong \mathbb{C}^{d_\xi},$ i.e. $\xi(xy)=\xi(x)\xi(y)$ and for the  adjoint of $\xi(x),$ $\xi(x)^*=\xi(x^{-1}),$ for every $x,y\in G.$
    \item The map $(x,v)\mapsto \xi(x)v, $ from $G\times H_\xi$ into $H_\xi$ is continuous.
    \item For every $x\in G,$ and $W_\xi\subset H_\xi,$ if $\xi(x)W_{\xi}\subset W_{\xi},$ then $W_\xi=H_\xi$ or $W_\xi=\emptyset.$
\end{itemize} Let $\textnormal{Rep}(G)$ be the set of unitary, continuous and irreducible representations of $G.$ The relation, {\small{
\begin{equation*}
    \xi_1\sim \xi_2\textnormal{ if and only if, there exists } \varkappa\in \textnormal{End}(H_{\xi_1},H_{\xi_2}),\textnormal{ such that }\varkappa\xi_{1}(x)\varkappa^{-1}=\xi_2(x), 
\end{equation*}}}for every $x\in G,$ is an equivalence relation and the unitary dual of $G,$ denoted by $\widehat{G}$ is defined via
$$
    \widehat{G}:={\textnormal{Rep}(G)}/{\sim}.
$$ The compactness of $G$ implies that $\widehat{G}$ is a discrete set. By a suitable changes of basis, we always can assume that every $\xi$ is matrix-valued and that $H_{\xi}=\mathbb{C}^{d_\xi}.$ If a representation $\xi$ is unitary, then $$\xi(G):=  \{\xi(x):x\in G \}$$ is a subgroup (of the group of  matrices $\mathbb{C}^{d_\xi\times d_\xi}$) which is isomorphic to the original group $G$. Thus the homomorphism $\xi$ allows us to represent the compact Lie group $G$ as a group of matrices. This is the motivation for the term `representation'.

The prototype of differential operators of the first order are vector-fields. One can associate first differential operators to vector fields using the exponential mapping as follows. We denote by $\mathfrak{g}$ the Lie algebra of $G$.
\begin{rem}
 Every $X\in\mathfrak{g},$ can be identified with the differential operator $X:C^\infty(G)\rightarrow C^\infty(G)$  defined by
 \begin{equation*}
     (X_{x}f)(x):=\frac{d}{dt}f(x\exp(tX) )|_{t=0},\quad x\in G.
 \end{equation*}
\end{rem}
The Laplacian on a compact Lie group is the model of a differential operator of second order. Its eigenvalues will be used to define the H\"ormander classes on $G$ (see Remark \ref{HC:LG} below). It can be defined as follows.
\begin{rem}[The Laplacian on $G$]\label{LaplaceDefi} Let $G$ be a compact Lie group of dimension $n$
 and let  $\textnormal{ad}:\mathfrak{g}\rightarrow \textnormal{End}(\mathfrak{g})$ be the adjoint representation of its Lie algebra $\mathfrak{g}$. 
With respect to the the Killing form
\begin{equation*}
    B(Y_1,Y_2):=\textnormal{Tr}(\textnormal{ad}(Y_1)\circ \textnormal{ad}(Y_2)),
\end{equation*}on $\mathfrak{g},$ let us choose an orthonormal basis $X_{1},\cdots, X_{n},$ of $\mathfrak{g}.$ The positive Laplacian on $G,$ (Casimir element) is defined via
\begin{equation}
    \mathcal{L}_{G}=-\sum_{j=1}^nX_{j}^2.
\end{equation}The operator $\mathcal{L}_G$ has $L^2(G)$-discrete spectrum $\{\lambda_{[\xi]}\}_{[\xi]\in \widehat{G}},$ and the eigenvalues of the (Bessel potential) operator  $(1+\mathcal{L}_G)^{\frac{1}{2}}$ are given by
\begin{equation}\label{weight}
    \langle \xi\rangle:=(1+\lambda_{[\xi]})^{\frac{1}{2}},
\end{equation} in view of the spectral mapping theorem. The weight function \eqref{weight} will be crucial in defining the H\"ormander classes of symbols.
\end{rem}
\begin{rem} Let $[\xi]\in \widehat{G},$ and $\xi\in [\xi].$  The entry-functions $\xi_{ij}$ of any unitary representation $\xi=(\xi_{ij})_{i,j=1}^{d_\xi}$ span an eigenspace $\mathcal{H}^\xi\subset L^2(G)$ of the Laplacian  and the restriction $\mathcal{L}_G|_{\mathcal{H}^\xi}=\lambda_{[\xi]}I,$ is a multiple of the identity operator on $\mathcal{H}^\xi,$ see Theorem 10.3.13 of \cite{Ruz}. The number $\lambda_{[\xi]}$ is independent of the choice of $\xi\in [\xi].$ Consequently, for any $\xi,$
the entry-functions $\xi_{ij}$   are of $C^\infty$-class. Indeed, they are eigenfunctions of the positive Laplace operator $\mathcal{L}_G$, that is $\mathcal{L}_G\xi_{ij}=\lambda_{[\xi]}\xi_{ij}.$
\end{rem}

We now follow \cite[Chapter 10]{Ruz} to introduce the analysis of operators on the phase space $G\times \widehat{G}.$ Indeed, if $A$ is a continuous linear operator on $C^\infty(G),$  there exists a function \begin{equation}\label{symbol}a:G\times \widehat{G}\rightarrow \cup_{\ell\in \mathbb{N}} \mathbb{C}^{\ell\times \ell},\end{equation} such that for every equivalence class $[\xi]\in \widehat{G},$ $a(x,\xi):=a(x,[\xi])\in \mathbb{C}^{d_\xi\times d_\xi},$ (where $d_\xi$ is the dimension of the continuous, unitary and irreducible  representation $\xi:G\rightarrow \textnormal{U}(\mathbb{C}^{d_\xi})$) and satisfying
\begin{equation}\label{RuzhanskyTurunenQuanti}
    Af(x)=\sum_{[\xi]\in \widehat{G}}d_\xi\textnormal{\textbf{Tr}}[\xi(x)a(x,\xi)\widehat{f}(\xi)],\,\,f\in C^\infty(G).
\end{equation}Here 
\begin{equation*}
    \widehat{f}(\xi)\equiv (\mathscr{F}f)(\xi):=\int\limits_{G}f(x)\xi(x)^*dx\in  \mathbb{C}^{d_\xi\times d_\xi},\,\,\,[\xi]\in \widehat{G},
\end{equation*}is the matrix-valued Fourier transform of $f$ at $\xi=(\xi_{ij})_{i,j=1}^{d_\xi},$ and 
the function $a$ in  \eqref{RuzhanskyTurunenQuanti}  satisfies the identity, 
\begin{equation}
    a(x,\xi)\equiv \widehat{A}(x,\xi)\equiv \sigma_A(x,\xi)=\xi(x)^{*}(A\xi)(x),\,\, A\xi:=(A\xi_{ij})_{i,j=1}^{d_\xi},\,\,\,\,[\xi]\in \widehat{G}.
\end{equation}In general, we refer to the function $a$ as the (global or matrix) {\it{symbol}} of the operator $A.$ 
\begin{rem}
We denote by $\mathscr{S}(\widehat{G}):=\mathscr{F}(C^\infty(G))$ the Schwartz space on the unitary dual. Then the Fourier transform on the group $\mathscr{F}$ is a bijective mapping from $C^\infty(G)$ into $\mathscr{S}(\widehat{G})$ (see \cite[Page 541]{Ruz}), and in terms of the Fourier transform we have
\begin{equation*}
    Af(x)=\mathscr{F}^{-1}[ a(x,\cdot )(\mathscr{F}{f})\,](x),
\end{equation*}for every $f\in C^{\infty}(G).$ In particular, if $a(x,\xi)=I_{d_\xi}$ is the identity matrix in every representation space, $A\equiv I$ is the identity operator on $C^\infty(G),$ and we recover the Fourier inversion formula
\begin{equation*}
    f(x)=\sum_{[\xi]\in \widehat{G}}d_\xi\textnormal{Tr}[\xi(x)\widehat{f}(\xi)],\,\,f\in C^\infty(G).
\end{equation*}
\end{rem}
To classify symbols in the H\"ormander classes  developed in \cite{Ruz}, the notion of {\it{ difference operators}} on the unitary dual, by endowing $\widehat{G}$ with a difference structure, is an instrumental tool.  By following  \cite{RuzhanskyWirth2015},   a difference operator $Q_\xi$ of order $k,$  is defined by
\begin{equation}\label{taylordifferences}
    Q_\xi\widehat{f}(\xi)=\widehat{qf}(\xi),\,[\xi]\in \widehat{G}, 
\end{equation} for all $f\in C^\infty(G),$ for some function $q$ vanishing of order $k$ at the identity $e=e_G.$ We will denote by $\textnormal{diff}^k(\widehat{G})$  the set of all difference operators of order $k.$ For a  fixed smooth function $q,$ the associated difference operator will be denoted by $\Delta_q:=Q_\xi.$ We will choose an admissible collection of difference operators (see e.g. \cite{RuzhanskyWirth2015}),
\begin{equation*}
  \Delta_{\xi}^\alpha:=\Delta_{q_{(1)}}^{\alpha_1}\cdots   \Delta_{q_{(i)}}^{\alpha_{i}},\,\,\alpha=(\alpha_j)_{1\leqslant j\leqslant i}, 
\end{equation*}
where
\begin{equation*}
    \textnormal{rank}\{\nabla q_{(j)}(e):1\leqslant j\leqslant i \}=\textnormal{dim}(G), \textnormal{   and   }\Delta_{q_{(j)}}\in \textnormal{diff}^{1}(\widehat{G}).
\end{equation*}We say that this admissible collection is strongly admissible if 
\begin{equation*}
    \bigcap_{j=1}^{i}\{x\in G: q_{(j)}(x)=0\}=\{e_G\}.
\end{equation*}

\begin{rem}\label{remarkD} A special type of difference operators can be defined by using the unitary representations  of $G.$ Indeed, if $\xi_{0}$ is a fixed irreducible and unitary  representation of $G$, consider the matrix
\begin{equation}
 \xi_{0}(g)-I_{d_{\xi_{0}}}=[\xi_{0}(g)_{ij}-\delta_{ij}]_{i,j=1}^{d_\xi},\, \quad g\in G.   
\end{equation}Then, we associate  to the function 
$
    q_{ij}(g):=\xi_{0}(g)_{ij}-\delta_{ij},\quad g\in G,
$ a difference operator  via
\begin{equation}
    \mathbb{D}_{\xi_0,i,j}:=\mathscr{F}(\xi_{0}(g)_{ij}-\delta_{ij})\mathscr{F}^{-1}.
\end{equation}
If the representation is fixed we omit the index $\xi_0$ so that, from a sequence $\mathbb{D}_1=\mathbb{D}_{\xi_0,j_1,i_1},\cdots, \mathbb{D}_n=\mathbb{D}_{\xi_0,j_n,i_n}$ of operators of this type we define $\mathbb{D}^{\alpha}=\mathbb{D}_{1}^{\alpha_1}\cdots \mathbb{D}^{\alpha_n}_n$, where $\alpha\in\mathbb{N}^n$.
\end{rem}
\begin{rem}[Leibniz rule for difference operators]\label{Leibnizrule} The difference structure on the unitary dual $\widehat{G},$ induced by the difference operators acting on the momentum variable $[\xi]\in \widehat{G},$  implies the following Leibniz rule 
\begin{align*}
    \Delta_{\xi}^{\alpha}(a_{1}a_{2})(x_0,\xi) =\sum_{ |\gamma|,|\varepsilon|\leqslant |\alpha|\leqslant |\gamma|+|\varepsilon| }C_{\varepsilon,\gamma}(\Delta_{\xi}^\gamma a_{1})(x_0,\xi) (\Delta_{\xi}^\varepsilon a_{2})(x_0,\xi), \quad (x_{0},[\xi])\in G\times \widehat{G},
\end{align*} for $a_{1},a_{2}\in C^{\infty}(G, {S}'(\widehat{G})).$ For more details we refer the reader to  \cite{Ruz}.
\end{rem}

\begin{rem}[H\"ormander classes on Lie groups]\label{HC:LG}
If $A\in \Psi^m_{\rho,\delta}(G,\textnormal{loc}),$ $\rho\geqslant   1-\delta,$ the matrix-valued symbol $\sigma_A$ of $A$ satisfies (see \cite{Ruz,RuzhanskyTurunenWirth2014}),
\begin{equation}\label{HormanderSymbolMatrix}
    \Vert {X}_x^\beta \Delta_{\xi}^{\gamma} \sigma_A(x,\xi)\Vert_{\textnormal{op}}\leqslant C_{\alpha,\beta}
    \langle \xi \rangle^{m-\rho|\gamma|+\delta|\beta|}\end{equation} for all $\beta$ and  $\gamma $ multi-indices and all $(x,[\xi])\in G\times \widehat{G}$. Now, if $0\leqslant \delta,\rho\leqslant 1,$
we say that $\sigma_A\in {S}^m_{\rho,\delta}(G\times \widehat{G}),$ if the global symbol inequalities \eqref{HormanderSymbolMatrix}  hold true. So, for $\sigma_A\in {S}^m_{\rho,\delta}(G\times \widehat{G})$ we write $A\in\Psi^m_{\rho,\delta}(G)\equiv\textnormal{Op}({S}^m_{\rho,\delta}(G\times \widehat{G})).$  Observe that 
\begin{equation*}
   \textnormal{Op}({S}^m_{\rho,\delta}(G\times \widehat{G}))= \Psi^m_{\rho,\delta}(G,\textnormal{loc}),\,\,\,0\leqslant \delta<\rho\leqslant 1,\,\rho\geqslant   1-\delta.
\end{equation*}
\end{rem}
An important type of pseudo-differential operators are the elliptic operators. They are invertible modulo smoothing operators as we discuss in the following remark.
\begin{rem}
For  $m\in \mathbb{R},$ and  $0\leqslant \delta<\rho\leqslant 1,$  a symbol  $a=a(x,\xi)\in {S}^{m}_{\rho,\delta}(G\times \widehat{G})$ is elliptic, if   $a(x,\xi)$ is invertible for every $(x,[\xi])\in G\times\widehat{G},$ (with the exception of finitely many equivalence classes $[\xi]$) and if it satisfies
\begin{equation}
   \Vert a(x,\xi)^{-1}\Vert_{\textnormal{op}}\leq C\langle \xi\rangle^{-m},\,\,\,\,\langle\xi \rangle\geq  N,
\end{equation}for some $N\in \mathbb{N}$ large enough. In this case we say that the pseudo-differential operator $A$ is elliptic of order $m.$ The crucial property of elliptic operators is that they are of Fredholm type on $L^2(G).$ Indeed,  by following  \cite{Ruz}, if $0\leq\delta<\rho\leq 1,$ there exists $B\in {S}^{-m}_{\rho,\delta}(G\times \widehat{G}),$ such that $$R_1:=AB-I,R_2:=BA-I\in {S}^{-\infty}(G\times \widehat{G}):=\bigcap_{m\in \mathbb{R}} {S}^m_{\rho,\delta}(G\times \widehat{G}). $$ Indeed, the operators $R_1$ and $R_2$ are compact on $L^2(G).$ This holds in general for closed manifolds 
but the additional condition $\rho\geq 1-\delta$ is required.
\end{rem}

\subsection{H\"ormander classes in the presence of a sub-Riemannian structure}\label{Sub:Hor:Cl}
The H\"ormander classes in \cite{Ruz} are defined using the spectrum of the Laplacian. In \cite{CR20}, these classes were extended by using as measure of decay the spectrum of the sub-Laplacian. This generalisation is a useful tool when studying subelliptic (analytic and spectral) problems as  it was extensively developed in \cite{CR20}. To introduce this calculus let us fix the necessary notations.\\ 

Let $G$ be a compact Lie group  with Lie algebra $\mathfrak{g}.$ Under the identification $\mathfrak{g}\simeq T_{e_G}G,  $ where $e_{G}$ is the identity element of $G,$ let us consider  a system of $C^\infty$-vector fields $X=\{X_1,\cdots,X_k \}\in \mathfrak{g}$. For all $I=(i_1,\cdots,i_\omega)\in \{1,2,\cdots,k\}^{\omega},$ of length $\omega\geqslant   2,$ denote $$X_{I}:=[X_{i_1},[X_{i_2},\cdots [X_{i_{\omega-1}},X_{i_\omega}]\cdots]],$$ and for $\omega=1,$ $I=(i),$ $X_{I}:=X_{i}.$ Let $V_{\omega}$ be the subspace generated by the set $\{X_{I}:|I|\leqslant \omega\}.$ That $X$ satisfies the H\"ormander condition,  means that there exists $\kappa'\in \mathbb{N}$ such that $V_{\kappa'}=\mathfrak{g}.$ Certainly, we consider the smallest $\kappa'$ with this property and we denote it by $\kappa$ which will be  later called the step of the system $X.$ We also say that $X$ satisfies the H\"ormander condition of order $\kappa.$ Note that  the sum of squares
\begin{equation*}
    \mathcal{L}\equiv \mathcal{L}_{X}:=-(X_{1}^2+\cdots +X_{k}^2),
\end{equation*} is a subelliptic operator which by following the usual nomenclature is called the subelliptic Laplacian associated with the family $X.$ For short we refer to $\mathcal{L}$ as the sub-Laplacian.\\

A central notion in the analysis of the sub-Laplacian is that of the Hausdorff dimension, in this case, associated to $\mathcal{L}$. Indeed, for all $x\in G,$ denote by $H_{x}^\omega G$ the subspace of the tangent space $T_xG$ generated by the $X_i$'s and all the Lie brackets  $$ [X_{j_1},X_{j_2}],[X_{j_1},[X_{j_2},X_{j_3}]],\cdots, [X_{j_1},[X_{j_2}, [X_{j_3},\cdots, X_{j_\omega}] ] ],$$ with $\omega\leqslant \kappa.$ The H\"ormander condition can be stated as $H_{x}^\kappa G=T_xG,$ $x\in G.$ We have the filtration
\begin{equation*}
H_{x}^1G\subset H_{x}^2G \subset H_{x}^3G\subset \cdots \subset H_{x}^{\kappa-1}G\subset H_{x}^\kappa G= T_xG,\,\,x\in G.
\end{equation*} In our case,  the dimension of every $H_x^\omega G$ does not depend on $x$ and we write $\dim H^\omega G:=\dim H_{x}^\omega G,$ for any $x\in G.$ So, the Hausdorff dimension can be defined as (see e.g. \cite[p. 6]{HK16}),
\begin{equation}\label{Hausdorff-dimension}
    Q:=\dim(H^1G)+\sum_{i=1}^{\kappa-1} (i+1)(\dim H^{i+1}G-\dim H^{i}G ).
\end{equation} We will use the notation    $\widehat{ \mathcal{M}}$ for  the matrix-valued symbol of the operator $\mathcal{M}:=(1+\mathcal{L})^{\frac{1}{2}},$ and  for every $[\xi]\in \widehat{G},$ and $s\in \mathbb{R}, $
   \begin{equation*}
       \widehat{ \mathcal{M}}(\xi)^{s}:=\textnormal{diag}[(1+\nu_{ii}(\xi)^2)^{\frac{s}{2}}]_{1\leqslant i\leqslant d_\xi},
   \end{equation*} where $\widehat{\mathcal{L}}(\xi)=:\textnormal{diag}[\nu_{ii}(\xi)^2]_{1\leqslant i\leqslant d_\xi}$ is the symbol of the sub-Laplacian $\mathcal{L}$ at $[\xi].$\\
   
In order to introduce the H\"ormander classes associated to the sub-Laplacian $\mathcal{L},$ we require a suitable class of differential operators $\partial_{X}^{(\beta)}$, compatible with the difference operators and the remainders in the Taylor expansion formula on compact Lie groups. We introduce such a family in the following lemma (see Lemma 4.2 of \cite{CR20}).
  \begin{lem}[Global Taylor Series on compact Lie groups]\label{Taylorseries} Let $G$ be a compact Lie group of dimension $n.$ Let us consider an strongly admissible admissible collection of difference operators $\mathfrak{D}=\{\Delta_{q_{(j)}}\}_{1\leqslant j\leqslant n}$, which means that 
\begin{equation*}
    \textnormal{rank}\{\nabla q_{(j)}(e):1\leqslant j\leqslant n \}=n, \,\,\,\bigcap_{j=1}^{n}\{x\in G: q_{(j)}(x)=0\}=\{e_G\}.
\end{equation*}Then there exists a basis $X_{\mathfrak{D}}=\{X_{1,\mathfrak{D}},\cdots ,X_{n,\mathfrak{D}}\}$ of $\mathfrak{g},$ such that $X_{j,\mathfrak{D}}q_{(k)}(\cdot^{-1})(e_G)=\delta_{jk}.$ Moreover, by using the multi-index notation $\partial_{X}^{(\beta)}=\partial_{X_{i,\mathfrak{D}}}^{\beta_1}\cdots \partial_{X_{n,\mathfrak{D}}}^{\beta_n}, $ $\beta\in\mathbb{N}_0^n,$
where $$\partial_{X_{i,\mathfrak{D}}}f(x):=  \frac{d}{dt}f(x\exp(tX_{i,\mathfrak{D}}) )|_{t=0},\,\,f\in C^{\infty}(G),$$ and denoting
for every $f\in C^{\infty}(G)$
\begin{equation*}
    R_{x,N}^{f}(y):=f(xy)-\sum_{|\alpha|<N}q_{(1)}^{\alpha_1}(y^{-1})\cdots q_{(n)}^{\alpha_n}(y^{-1})\partial_{X}^{(\alpha)}f(x),
\end{equation*} we have that 
\begin{equation*}
    | R_{x,N}^{f}(y)|\leqslant C|y|^{N}\max_{|\alpha|\leqslant N}\Vert \partial_{X}^{(\alpha)}f\Vert_{L^\infty(G)}.
\end{equation*}The constant $C>0$ is dependent on $N,$ $G$ and $\mathfrak{D},$ but not on $f\in C^\infty(G).$ Also, we have that $\partial_{X}^{(\beta)}|_{x_1=x}R_{x_1,N}^{f}=R_{x,N}^{\partial_{X}^{(\beta)}f}$ and 
\begin{equation*}
    | \partial_{X}^{(\beta)}|_{y_1=y}R_{x,N}^{f}(y_1)|\leqslant C|y|^{N-|\beta|}\max_{|\alpha|\leqslant N-|\beta|}\Vert \partial_{X}^{(\alpha+\beta)}f\Vert_{L^\infty(G)},
\end{equation*}provided that $|\beta|\leqslant N.$
 \end{lem}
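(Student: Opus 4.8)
The plan is to establish the three assertions in turn, beginning with the algebraic one. The rank hypothesis $\textnormal{rank}\{\nabla q_{(j)}(e):1\leqslant j\leqslant n\}=n$ says that the covectors $d(q_{(k)}\circ\iota)(e_G)$, where $\iota(x)=x^{-1}$ and $d\iota_{e_G}=-\mathrm{Id}$, form a basis of $T_{e_G}^{*}G\cong\mathfrak{g}^{*}$. First I would let $X_{\mathfrak{D}}=\{X_{j,\mathfrak{D}}\}$ be the basis of $\mathfrak{g}\cong T_{e_G}G$ dual to this covector basis; it exists and is unique, and unwinding the pairing gives precisely $X_{j,\mathfrak{D}}q_{(k)}(\cdot^{-1})(e_G)=\delta_{jk}$. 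Together with the strong admissibility $\bigcap_{j}\{q_{(j)}=0\}=\{e_G\}$, the rank condition makes $y\mapsto(q_{(1)}(y^{-1}),\dots,q_{(n)}(y^{-1}))$ a local chart centred at $e_G$, by the inverse function theorem.

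For the remainder estimate I would reduce the group Taylor expansion to the classical one. The efficient device is to use exponential coordinates of the second kind adapted to $X_{\mathfrak{D}}$, writing $y=\exp(t_1X_{1,\mathfrak{D}})\cdots\exp(t_nX_{n,\mathfrak{D}})$ near $e_G$; differentiating the factors one at a time shows that $F(t):=f(xy)$ satisfies $\partial_t^{\alpha}F(0)=\partial_X^{(\alpha)}f(x)$ for the ordering convention of the statement, so that the iterated non-commuting fields are linearised in these coordinates. Applying the quantitative Taylor theorem with integral remainder to $F$ on a fixed coordinate ball yields $f(xy)=\sum_{|\alpha|<N}\tfrac{1}{\alpha!}t^{\alpha}\,\partial_X^{(\alpha)}f(x)+O(|t|^{N})$, where the order-$N$ remainder is controlled by $\sup|\partial_t^{\alpha}F|$ with $|\alpha|=N$. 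Since each $\partial_t^{\alpha}F(t)$ is a combination, with coefficients bounded on the chart, of the derivatives $\partial_X^{(\beta)}f$ with $|\beta|\leqslant N$ evaluated at $xy$, this is majorised by $\max_{|\alpha|\leqslant N}\Vert\partial_X^{(\alpha)}f\Vert_{L^{\infty}(G)}$, and $|t|\asymp|y|$.

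The step I expect to be the crux is to replace the coordinate monomials $t^{\alpha}$ by the difference monomials $q_{(1)}^{\alpha_1}(y^{-1})\cdots q_{(n)}^{\alpha_n}(y^{-1})$ of the statement while matching the coefficients with the derivatives $\partial_X^{(\alpha)}f(x)$. The two coordinate systems share the same $1$-jet at $e_G$ (both are dual to $X_{\mathfrak{D}}$), and by the product rule together with the duality relation one obtains the pairing $\partial_X^{(\beta)}\big[q_{(1)}^{\alpha_1}(\cdot^{-1})\cdots q_{(n)}^{\alpha_n}(\cdot^{-1})\big](e_G)=\alpha!\,\delta_{\alpha\beta}$ for $|\beta|\leqslant|\alpha|$, which pins down the coefficients of the expansion (up to the combinatorial normalisation), the Leibniz calculus of Remark~\ref{Leibnizrule} governing the compatibility with the difference operators. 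The discrepancies $q^{\alpha}(y^{-1})-t^{\alpha}$ and the higher pairings with $|\beta|>|\alpha|$ are of strictly higher homogeneity and must be reorganised, using admissibility, into the terms with larger $|\alpha|$ and into the order-$N$ remainder. This non-commutative bookkeeping — rather than any hard analysis — is where the argument has to be driven carefully.

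Finally, the two stability assertions I would obtain from the remainder formula itself. Differentiating in the increment variable is clean, because $\partial_{X_{j,\mathfrak{D}}}$ acting in $y$ passes through $f(xy)$, that is $\partial_X^{(\beta)}|_{y_1=y}f(xy_1)=(\partial_X^{(\beta)}f)(xy)$; applying $\partial_X^{(\beta)}$ to the definition of $R_{x,N}^{f}$ and comparing with the expansion of $\partial_X^{(\beta)}f$ then produces, after absorbing the lower-order monomials, the bound $|y|^{N-|\beta|}\max_{|\alpha|\leqslant N-|\beta|}\Vert\partial_X^{(\alpha+\beta)}f\Vert_{L^{\infty}(G)}$, i.e. the main estimate for $\partial_X^{(\beta)}f$ with $N$ lowered to $N-|\beta|$. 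The base-point covariance $\partial_X^{(\beta)}|_{x_1=x}R_{x_1,N}^{f}=R_{x,N}^{\partial_X^{(\beta)}f}$ I would read off by differentiating the defining identity in the base point and re-identifying the outcome as the remainder of $\partial_X^{(\beta)}f$, keeping track of the adjoint action that arises when the fields are moved past the argument $y$. The recurring obstacle throughout is precisely this reconciliation of the non-commuting fields $\partial_{X_{j,\mathfrak{D}}}$ with the commutative monomial expansion; once the pairing above and the uniform control of the remainder's own derivatives are secured, all three displayed inequalities follow from the classical Taylor remainder.
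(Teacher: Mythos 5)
First, a caveat on the comparison: the paper contains no proof of Lemma \ref{Taylorseries} at all --- it is imported, with an explicit citation, as Lemma 4.2 of \cite{CR20} (itself modelled on the Taylor expansion of \cite[Section 10.6]{Ruz}). So your attempt can only be measured against that construction, whose skeleton your plan does follow: dual basis by linear algebra, reduction to the classical Taylor theorem in adapted coordinates, then transfer to the monomials $q_{(1)}^{\alpha_1}(y^{-1})\cdots q_{(n)}^{\alpha_n}(y^{-1})$. Your first two steps are sound: the dual-basis argument is exactly right, and the identity $\partial_t^{\alpha}F(0)=\partial_X^{(\alpha)}f(x)$ in coordinates of the second kind ordered compatibly with the multi-index convention is correct and yields $f(xy)=\sum_{|\alpha|<N}\frac{1}{\alpha!}\,t^{\alpha}\,\partial_X^{(\alpha)}f(x)+O\bigl(|t|^{N}\max_{|\beta|\leq N}\Vert\partial_X^{(\beta)}f\Vert_{L^\infty(G)}\bigr)$, with the uniform control you describe.

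The genuine gap is at the step you yourself call the crux, and it is not mere bookkeeping: the discrepancy between your expansion and the asserted one is \emph{not} of strictly higher homogeneity. Since $q_{(j)}(y^{-1})=t_j+O(|t|^2)$, the coefficient of $t^{\alpha}$ in the lemma's sum is $\partial_X^{(\alpha)}f(x)$, while in yours it is $\frac{1}{\alpha!}\partial_X^{(\alpha)}f(x)$ --- a mismatch at order exactly $|\alpha|$. Your own pairing formula $\partial_X^{(\beta)}[q^{\alpha}(\cdot^{-1})](e_G)=\alpha!\,\delta_{\alpha\beta}$ ($|\beta|\leq|\alpha|$) says precisely that the monomials $q^{\alpha}(y^{-1})$ are dual to $\frac{1}{\alpha!}\partial_X^{(\alpha)}$, not to $\partial_X^{(\alpha)}$. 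Indeed, requiring $R_{x,N}^{f}$ to have vanishing $y$-derivatives of order $<N$ at $y=e_G$ and using that the ordered products $\partial_X^{(\alpha)}$ are linearly independent (Poincar\'e--Birkhoff--Witt), the expansion as literally written forces $\partial_X^{(\beta)}[q^{\alpha}(\cdot^{-1})](e_G)=\delta_{\alpha\beta}$ for all $|\alpha|,|\beta|<N$, which fails both because of the $\alpha!$ and because the pairings with $|\beta|>|\alpha|$ need not vanish; concretely, for $N\geq 3$ the stated remainder bound breaks down at any point where all $X_jf$ vanish but some $X_j^2f$ does not, since then $R_{x,3}^{f}(y)=-\frac{1}{2}\sum_j t_j^2\,X_j^2f(x)+O(|t|^3)$. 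What your argument proves, once the higher pairings are absorbed (which perturbs the coefficient of each $q^{\gamma}(y^{-1})$ by operators of order $<|\gamma|$), is the expansion with the \emph{adapted} Taylor operators of \cite[Section 10.6]{Ruz}, which agree with $\frac{1}{\alpha!}\partial_{X_{1,\mathfrak{D}}}^{\alpha_1}\cdots\partial_{X_{n,\mathfrak{D}}}^{\alpha_n}$ only modulo lower-order terms; that is the reading under which the lemma is correct and is used in \cite{CR20}, and your proposal never bridges this normalisation. The same issue infects your final step: for plain products of left-invariant fields, neither $\partial_X^{(\beta)}\vert_{x_1=x}f(x_1y)=(\partial_X^{(\beta)}f)(xy)$ (base-point differentiation introduces $\mathrm{Ad}(y^{-1})$) nor commutativity of the $\partial_X^{(\alpha)}$'s holds, so the covariance identity $\partial_X^{(\beta)}\vert_{x_1=x}R_{x_1,N}^{f}=R_{x,N}^{\partial_X^{(\beta)}f}$ does not follow by ``differentiating the defining identity''; it too requires the adapted operators, or an actual argument, rather than a remark about keeping track of the adjoint action.
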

Now, we introduce the H\"ormander classes for the sub-Laplacian as developed in \cite{CR20}.
\begin{defn}[Subelliptic H\"ormander classes]\label{contracted''}
   Let $G$ be a compact Lie group and let $0\leqslant \delta,\rho\leqslant 1.$ Let us consider a sub-Laplacian $\mathcal{L}=-(X_1^2+\cdots +X_k^2)$ on $G,$ where the system of vector fields $X=\{X_i\}_{i=1}^{k}$ satisfies the H\"ormander condition of step $\kappa$. A symbol $\sigma$ belongs to the subelliptic H\"ormander class ${S}^{m,\mathcal{L}}_{\rho,\delta}(G\times \widehat{G})$ if it satisfies the following  symbol inequalities
   \begin{equation}\label{InIC}
      p_{\alpha,\beta,\rho,\delta,m,\textnormal{left}}(\sigma)':= \sup_{(x,[\xi])\in G\times \widehat{G} }\Vert \widehat{ \mathcal{M}}(\xi)^{(\rho|\alpha|-\delta|\beta|-m)}\partial_{X}^{(\beta)} \Delta_{\xi}^{\alpha}\sigma(x,\xi)\Vert_{\textnormal{op}} <\infty,\,\forall\alpha,\beta,
   \end{equation}
   and 
   \begin{equation}\label{InIIC}
      p_{\alpha,\beta,\rho,\delta,m,\textnormal{right}}(\sigma)':= \sup_{(x,[\xi])\in G\times \widehat{G} }\Vert (\partial_{X}^{(\beta)} \Delta_{\xi}^{\alpha} \sigma(x,\xi) ) \widehat{ \mathcal{M}}(\xi)^{(\rho|\alpha|-\delta|\beta|-m)}\Vert_{\textnormal{op}} <\infty,\,\forall\alpha,\beta.
   \end{equation}  The family of seminorms
   \begin{equation}
      p_{\alpha,\beta,\rho,\delta,m,\textnormal{right}}(\sigma)', \,p_{\alpha,\beta,\rho,\delta,m,\textnormal{right}}(\sigma)'
   \end{equation}endow the class ${S}^{m,\mathcal{L}}_{\rho,\delta}(G\times \widehat{G})$  with a natural structure of Fr\'echet space. By denoting
   \begin{equation}
     \Psi^{m,\mathcal{L}}_{\rho,\delta}(G\times \widehat{G}):=  \{A: \sigma_A\in S^{m,\mathcal{L}}_{\rho,\delta}(G\times \widehat{G}) \},
   \end{equation} and by defining 
   \begin{equation}
       p_{\alpha,\beta,\rho,\delta,m,\textnormal{right}}(A)':=p_{\alpha,\beta,\rho,\delta,m,\textnormal{right}}(\sigma_A)',\quad p_{\alpha,\beta,\rho,\delta,m,\textnormal{left}}(A)':=p_{\alpha,\beta,\rho,\delta,m,\textnormal{left}}(\sigma_A)',
   \end{equation}the operator class $\Psi^{m,\mathcal{L}}_{\rho,\delta}(G\times \widehat{G})$ also has a structure of Fr\'echet space.
  \end{defn}
  In the next theorem we summarise some of the fundamental properties of the subelliptic calculus in \cite{CR20} that we will use in our further analysis.
\begin{thm}\label{calculus} Let $0\leqslant \delta<\rho\leqslant 1,$ and let us denote $\Psi^{m,\mathcal{L}}_{\rho,\delta}:=\textnormal{Op}({S}^{m,\mathcal{L}}_{\rho,\delta}(G\times \widehat{G})),$ for every $m\in \mathbb{R}.$ Then,
\begin{itemize}
    \item [(i)] The mapping $A\mapsto A^{*}:\Psi^{m,\mathcal{L}}_{\rho,\delta}\rightarrow \Psi^{m,\mathcal{L}}_{\rho,\delta}$ is a continuous linear mapping between Fr\'echet spaces and  the  symbol of $A^*,$ $\sigma_{A^*}(x,\xi)$ satisfies the asymptotic expansion,
 \begin{equation*}
    \widehat{A^{*}}(x,\xi)\sim \sum_{|\alpha|= 0}^\infty\Delta_\xi^{\alpha}\partial_{X}^{(\alpha)} (\widehat{A}(x,\xi)^{*}).
 \end{equation*} This means that, for every $N\in \mathbb{N},$ and all $\ell\in \mathbb{N},$
\begin{equation*}
   \Small{ \Delta_{\xi}^{\alpha_\ell}\partial_{X}^{(\beta)}\left(\widehat{A^{*}}(x,\xi)-\sum_{|\alpha|\leqslant N}\Delta_{\xi}^\alpha\partial_{X}^{(\alpha)} (\widehat{A}(x,\xi)^{*}) \right)\in {S}^{m-(\rho-\delta)(N+1)-\rho\ell+\delta|\beta|,\mathcal{L}}_{\rho,\delta}(G\times\widehat{G}) },
\end{equation*} where $|\alpha_\ell|=\ell.$
\item [(ii)] The mapping $(A_1,A_2)\mapsto A_1\circ A_2: \Psi^{m_1,\mathcal{L}}_{\rho,\delta}\times \Psi^{m_2,\mathcal{L}}_{\rho,\delta}\rightarrow \Psi^{m_3,\mathcal{L}}_{\rho,\delta}$ is a continuous bilinear mapping between Fr\'echet spaces, and the symbol of $A=A_{1}\circ A_2,$ satisfies the asymptotic expansion,
\begin{equation*}
    \sigma_A(x,\xi)\sim \sum_{|\alpha|= 0}^\infty(\Delta_{\xi}^\alpha\widehat{A}_{1}(x,\xi))(\partial_{X}^{(\alpha)} \widehat{A}_2(x,\xi)),
\end{equation*}this means that, for every $N\in \mathbb{N},$ and all $\ell \in\mathbb{N},$
\begin{align*}
    &\Delta_{\xi}^{\alpha_\ell}\partial_{X}^{(\beta)}\left(\sigma_A(x,\xi)-\sum_{|\alpha|\leqslant N}  (\Delta_\xi^{\alpha}\widehat{A}_{1}(x,\xi))(\partial_{X}^{(\alpha)} \widehat{A}_2(x,\xi))  \right)\\
    &\hspace{2cm}\in {S}^{m_1+m_2-(\rho-\delta)(N+1)-\rho\ell+\delta|\beta|,\mathcal{L}}_{\rho,\delta}(G\times \widehat{G}),
\end{align*}for every  $\alpha_\ell \in \mathbb{N}_0^n$ with $|\alpha_\ell|=\ell.$
\item [(iii)] For  $0\leqslant \delta< \rho\leqslant    1,$  (or for $0\leq \delta\leq \rho\leq 1,$ $\delta<1/{\kappa}$) let us consider a continuous linear operator $A:C^\infty(G)\rightarrow\mathscr{D}'(G)$ with symbol  $\sigma\in {S}^{0,\mathcal{L}}_{\rho,\delta}(G\times \widehat{G})$. Then $A$ extends to a bounded operator from $L^2(G)$ to  $L^2(G).$ 
\end{itemize}
\end{thm}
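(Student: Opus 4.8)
The plan is to treat the three statements as the standard pillars of a symbolic calculus: I would prove the adjoint and composition formulae (i)--(ii) by a single Taylor-expansion mechanism, and then \emph{deduce} the $L^2$-boundedness (iii) from them via a square-root (Calder\'on--Vaillancourt-type) argument. Throughout I would work directly with the matrix quantisation \eqref{RuzhanskyTurunenQuanti}, the admissible difference operators $\Delta_\xi^\alpha$, the vector-field derivatives $\partial_X^{(\beta)}$ furnished by Lemma \ref{Taylorseries}, and the Leibniz rule of Remark \ref{Leibnizrule}.

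For the composition (ii), I would start from $\sigma_{A_1\circ A_2}(x,\xi)=\xi(x)^*(A_1A_2\xi)(x)$ and use the identity $(A_2\xi)(x)=\xi(x)\sigma_{A_2}(x,\xi)$, so that applying $A_1$ reduces to quantising the $x$-dependent product $y\mapsto\xi(y)\sigma_{A_2}(y,\xi)$. Expanding $\sigma_{A_2}(y,\xi)$ about $y=x$ by the global Taylor series of Lemma \ref{Taylorseries}, the monomials $q_{(1)}^{\alpha_1}\cdots q_{(n)}^{\alpha_n}$ are converted by the quantisation of $A_1$ into the difference operators $\Delta_\xi^\alpha$ applied to $\sigma_{A_1}$, while the Taylor coefficients produce $\partial_X^{(\alpha)}\sigma_{A_2}$; this yields the stated series $\sum_\alpha(\Delta_\xi^\alpha\sigma_{A_1})(\partial_X^{(\alpha)}\sigma_{A_2})$. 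The order of each remainder is read off from the Taylor remainder bound of Lemma \ref{Taylorseries} together with the symbol inequalities \eqref{InIC}--\eqref{InIIC} and the Leibniz rule, giving the gain $-(\rho-\delta)(N+1)$. Part (i) is the same computation applied to $\langle Af,g\rangle=\langle f,A^*g\rangle$, from which $\sigma_{A^*}$ is read off. Continuity of both maps between the relevant Fr\'echet spaces is then immediate, since each seminorm \eqref{InIC}--\eqref{InIIC} of the output is majorised by finitely many seminorms of the inputs.

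For the $L^2$-boundedness (iii) I would use the positive-operator trick. By (i) and (ii), $A^*A\in\Psi^{0,\mathcal{L}}_{\rho,\delta}$ with principal symbol $\sigma_A(x,\xi)^*\sigma_A(x,\xi)$ modulo $S^{-(\rho-\delta),\mathcal{L}}_{\rho,\delta}(G\times\widehat{G})$. Choosing a constant $c>\sup_{(x,[\xi])}\|\sigma_A(x,\xi)\|_{\textnormal{op}}^2$, the Hermitian matrix symbol $c\,I_{d_\xi}-\sigma_{A^*A}(x,\xi)$ is uniformly bounded below by a positive multiple of the identity. I would then show that its square root $b(x,\xi):=(c\,I-\sigma_{A^*A}(x,\xi))^{1/2}$ again lies in $S^{0,\mathcal{L}}_{\rho,\delta}(G\times\widehat{G})$, and set $B:=\textnormal{Op}(b)$. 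By (ii), $B^*B+A^*A=c\,I+R$ with $R\in\Psi^{-(\rho-\delta),\mathcal{L}}_{\rho,\delta}$, so that for $f\in C^\infty(G)$,
\[
\|Af\|_{L^2}^2=c\|f\|_{L^2}^2-\|Bf\|_{L^2}^2+\langle Rf,f\rangle\le c\|f\|_{L^2}^2+\langle Rf,f\rangle .
\]
Since $R$ has strictly lower order, I would iterate this estimate on $R$; because each step improves the order by $-(\rho-\delta)<0$ (this is exactly where $\delta<\rho$, or $\delta<1/\kappa$, is used), after finitely many steps the remaining operator has order low enough to be Hilbert--Schmidt, hence bounded, via the matrix criterion $\sum_{[\xi]}d_\xi\|\sigma(x,\xi)\|_{\textnormal{HS}}^2<\infty$, whose convergence is governed by the Weyl asymptotics of the sub-Laplacian through the Hausdorff dimension $Q$.

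The main obstacle is the claim $b=(c\,I-\sigma_{A^*A})^{1/2}\in S^{0,\mathcal{L}}_{\rho,\delta}$: because the symbols are matrix-valued and non-commuting, and the class is measured by the anisotropic matrix weight $\widehat{\mathcal{M}}(\xi)^s$, one cannot merely differentiate a fractional power. I would instead represent the root by the Cauchy integral $b=\frac{1}{2\pi i}\oint z^{1/2}\,(z-(c\,I-\sigma_{A^*A}))^{-1}\,dz$ around its spectrum (bounded below away from $0$), and estimate $\partial_X^{(\beta)}\Delta_\xi^\alpha$ of the resolvent by the Leibniz rule of Remark \ref{Leibnizrule}, tracking the weight $\widehat{\mathcal{M}}(\xi)^{\rho|\alpha|-\delta|\beta|}$ through each factor; the uniform lower bound on $c\,I-\sigma_{A^*A}$ controls all the resolvent norms. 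A secondary subtlety is matching the order bookkeeping $-(\rho-\delta)(N+1)$ of the expansions in (i)--(ii) against the subelliptic weight so that the Hilbert--Schmidt termination is genuine, which is once more where the hypothesis $\delta<\rho$ (respectively $\delta<1/\kappa$) enters decisively.
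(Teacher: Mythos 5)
A preliminary remark on the comparison itself: the paper you are being checked against does not prove Theorem \ref{calculus} at all --- it is stated explicitly as a summary of results imported from \cite{CR20}, so your attempt can only be measured against the proofs given there. For parts (i) and (ii) your mechanism is the right one and is essentially the one in \cite{CR20} (and, in the elliptic case, in \cite{Ruz}): write $\sigma_{A_1\circ A_2}(x,\xi)=\xi(x)^*(A_1A_2\xi)(x)$, use $(A_2\xi)(x)=\xi(x)\sigma_{A_2}(x,\xi)$, Taylor-expand $\sigma_{A_2}(\cdot,\xi)$ by Lemma \ref{Taylorseries} so that the monomials $q_{(1)}^{\alpha_1}\cdots q_{(n)}^{\alpha_n}$ are absorbed into $\Delta_\xi^\alpha\sigma_{A_1}$ via \eqref{taylordifferences} while the coefficients give $\partial_X^{(\alpha)}\sigma_{A_2}$, and read the remainder order from the Taylor remainder estimate together with \eqref{InIC}--\eqref{InIIC} and Remark \ref{Leibnizrule}. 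As a sketch this is sound, and continuity of the maps between the Fr\'echet spaces follows as you say.

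The genuine gap is in part (iii). Your square-root/descent scheme improves the order of the remainder by exactly $-(\rho-\delta)$ at each stage, and termination at a Hilbert--Schmidt operator requires finitely many \emph{strict} improvements; hence your argument proves (iii) only when $\delta<\rho$. But the statement explicitly includes the alternative regime $0\leq\delta\leq\rho\leq1$ with $\delta<1/\kappa$, which allows $\delta=\rho$: there the gain per step is zero, $R$ has order $0$ again, and the iteration never terminates. Your parenthetical ``this is exactly where $\delta<\rho$, or $\delta<1/\kappa$, is used'' conceals this: the hypothesis $\delta<1/\kappa$ never actually enters any step of your argument (nothing in it would change if that hypothesis were deleted), which is a sure sign that the case it governs is not covered. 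The equality case $\rho=\delta$ is the genuine Calder\'on--Vaillancourt regime; already on $\mathbb{R}^n$ it cannot be reached by the H\"ormander square-root trick and requires a different device (Cotlar--Stein almost orthogonality, or reduction to a finite-regularity $L^2$ theorem on $G$), and it is in such an argument --- adapted to the sub-Laplacian weight $\widehat{\mathcal{M}}(\xi)$, whose entries compare to $\langle\xi\rangle$ only through $\langle\xi\rangle^{1/\kappa}\lesssim \widehat{\mathcal{M}}(\xi)\lesssim\langle\xi\rangle$ --- that the step $\kappa$ of the H\"ormander system, hence the threshold $1/\kappa$, makes its appearance. Within the regime $\delta<\rho$ your outline is fine: the one real obstacle, membership of $b=(c\,I-\sigma_{A^*A})^{1/2}$ in $S^{0,\mathcal{L}}_{\rho,\delta}(G\times\widehat{G})$, is correctly identified, and your Cauchy-integral/resolvent treatment is essentially the subelliptic functional calculus that \cite{CR20} develops for this purpose, so no circularity arises there.
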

Now, we introduce the family of subelliptic pseudo-differential operators that we will investigate in the setting of the Dixmier trace.
\begin{defn}\label{IesTParametrix} Let $m\in \mathbb{R},$ and let $0\leqslant \delta<\rho\leqslant 1.$  Let  $a=a(x,\xi)\in {S}^{m,\mathcal{L}}_{\rho,\delta}(G\times \widehat{G}).$  Assume also that $a(x,\xi)$ is invertible for every $(x,[\xi])\in G\times\widehat{G},$ and satisfies
\begin{equation}\label{Iesparametrix}
   \sup_{(x,[\xi])\in G\times \widehat{G}} \Vert \widehat{\mathcal{M}}(\xi)^{m}a(x,\xi)^{-1}\Vert_{\textnormal{op}}<\infty.
\end{equation} Then we will say that the operator $A$ associated with $a(\cdot,\cdot)$ is $\mathcal{L}$-elliptic.
\end{defn}

\begin{rem}
Let  $A$ be an $\mathcal{L}$-elliptic  operator. Then, Proposition 7.1 in \cite[Page 77]{CR20} shows that there exists $B\in {S}^{-m,\mathcal{L}}_{\rho,\delta}(G\times \widehat{G}),$ such that $AB-I,BA-I\in {S}^{-\infty,\mathcal{L}}(G\times \widehat{G}). $ Moreover, the symbol of $B$ satisfies the following asymptotic expansion
\begin{equation}
    \widehat{B}(x,\xi)\sim \sum_{N=0}^\infty\widehat{B}_{N}(x,\xi),\,\,\,(x,[\xi])\in G\times \widehat{G},
\end{equation}where $\widehat{B}_{N}(x,\xi)\in {S}^{-m-(\rho-\delta)N,\mathcal{L}}_{\rho,\delta}(G\times \widehat{G})$ obeys to the recursive  formula
\begin{equation}
    \widehat{B}_{N}(x,\xi)=-a(x,\xi)^{-1}\left(\sum_{k=0}^{N-1}\sum_{|\gamma|=N-k}(\Delta_{\xi}^\gamma a(x,\xi))(\partial_{X}^{(\gamma)}\widehat{B}_{k}(x,\xi))\right),\,\,N\geqslant 1,
\end{equation}with $ \widehat{B}_{0}(x,\xi)=a(x,\xi)^{-1}.$
\end{rem}

\subsection{Determinants on Hilbert spaces}
Since we are interested in the study of the Dixmier trace and its regularised determinant we will require some preliminary basic notions. First, we recall the definition of Schatten-von Neumann ideals on Hilbert spaces. A reason for this is that we will use operators $T$ of Dixmier type, where $T^p$ is of class trace. 

Let $\Hcal$ be a complex separable Hilbert space endowed with an inner product denoted 
by $\p{\cdot,\cdot}$, and let $T:\Hcal\rightarrow \Hcal$ be a linear compact operator. Let $T^*:\Hcal\rightarrow \Hcal$ be the adjoint of  $T$. Then the linear operator $(T^*T)^\half:\Hcal\rightarrow \Hcal$ is positive and compact. Let $(\psi_k)_k$ be an orthonormal basis for $\Hcal$ consisting of eigenvectors of $|T|=(T^*T)^\half$, and let $s_k(T)$ be the eigenvalue corresponding to the eigenvector 
$\psi_k$, $k=1,2,\dots$. The non-negative numbers $s_k(T)$, $k=1,2,\dots$, are called the {\em singular values} of $T:\Hcal\rightarrow \Hcal$. 
If 
$$
\sum_{k=1}^{\infty} s_k(T)<\infty ,
$$
then the linear operator $T:\Hcal\rightarrow \Hcal$ is said to be in the {\em trace class} $S_1(\mathcal{H})$. It can be shown that  $S_1(\Hcal)$ is a Banach space in which the norm $\|\cdot\|_{S_1}$ is given by 
$$
\|T\|_{S_1}= \sum_{k=1}^{\infty} s_k(T),\,T\in S_1(\mathcal{H}),
$$
with multiplicities counted.
Let $T:\Hcal\rightarrow \Hcal$ be an operator in $S_1(\Hcal)$ and let  $(\phi_k)_k$ be any orthonormal basis for $\Hcal$. Then, the series $\sum\limits_{k=1}^{\infty} \p{T\phi_k,\phi_k}$   is absolutely convergent and the sum is independent of the choice of the orthonormal basis $(\phi_k)_k$. Thus, we can define the trace $\Tr(T)$ of any linear operator
$T:\Hcal\rightarrow \Hcal$ in $S_1(\mathcal{H})$ by 
$$
\Tr(T):=\sum_{k=1}^{\infty}\p{T\phi_k,\phi_k},
$$
where $\{\phi_k: k=1,2,\dots\}$ is any orthonormal basis for $\Hcal$. If the singular values
are square-summable $T$ is called a {\em Hilbert-Schmidt} operator. It is clear that every trace class operator is a Hilbert-Schmidt operator.

More generally, if $1\leq p<\infty$ and the sequence of singular values is $p$-summable, then $T$ 
is said to belong to the Schatten-von Neumann class  ${S}_p(\Hcal)$. Indeed,  it is well known that each ${S}_p(\Hcal)$ is an ideal in $\mathcal{L}(\Hcal)$. If $1\leq p <\infty$, a norm is associated to ${S}_p(\Hcal)$ by
\[
\|T\|_{S_p}=\left(\sum\limits_{k=1}^{\infty}(s_k(T))^p\right)^{\frac{1}{p}}.
\] 
If $1\leq p<\infty$ 
the class $S_p(\Hcal)$ becomes a Banach space endowed by the norm $\|T\|_{S_p}$. If $p=\infty$ we define $S_{\infty}(\Hcal)$ as the class of bounded linear operators on $H$, with 
$\|T\|_{S_\infty}:=\|T\|_{\textnormal{op}}$, the operator norm.

\begin{rem}
We note  that  for a positive compact operator $A$ on $\mathcal{H}$ and  $1\leq p<\infty,$ 
$A^{p}\in S_{1}(\mathcal{H}),$ if and only $A\in S_{p}(\mathcal{H}),$ 
\end{rem}
For operators in the class ${S}_1(\mathcal{H})$ we have the following determinant formula, see  \cite[Page 19]{goh:trace}.

\begin{thm}[Plemelj-Smithies formula]\label{thm0a}
If $T\in S_1(\Hcal)$ one has
\[\det(I+\lambda T)=\exp\left(\sum\limits_{m=1}^{\infty}\frac{(-1)^{m+1}}{m}\Tr (T^m)\lambda ^m\right),  \]
for $\lambda\in \mathbb{C}$ with  $|\lambda|$ small enough. \\
\end{thm}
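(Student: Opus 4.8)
The plan is to obtain the exponential formula by integrating the logarithmic derivative of the determinant over a small disk. First I would restrict attention to $\abs{\lambda} < \n{T}_{S_\infty}^{-1}$, where $I+\lambda T$ is invertible and the Neumann series gives $(I+\lambda T)^{-1}=\sum_{n=0}^{\infty}(-\lambda)^n T^n$, convergent in the operator norm. Because $T\in S_1(\Hcal)$ and the Hölder inequality for Schatten ideals yields $\n{T^m}_{S_1}\leq \n{T}_{S_1}\n{T}_{S_\infty}^{m-1}$, the series
\[
(I+\lambda T)^{-1}T=\sum_{m=1}^{\infty}(-1)^{m-1}\lambda^{m-1}T^m
\]
in fact converges in the trace norm $\n{\cdot}_{S_1}$ on this disk. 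Hence $(I+\lambda T)^{-1}T$ remains of trace class and its trace may be evaluated term by term.

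The key structural input I would invoke is the Jacobi-type identity $\frac{d}{d\lambda}\log\det(I+\lambda T)=\Tr\p{(I+\lambda T)^{-1}T}$, valid wherever $I+\lambda T$ is invertible (note that $\det(I+\lambda T)\neq 0$ and is near $1$ on the small disk, so the principal branch of $\log$ is well defined there). Combining it with the previous step gives
\[
\frac{d}{d\lambda}\log\det(I+\lambda T)=\sum_{m=1}^{\infty}(-1)^{m-1}\Tr(T^m)\lambda^{m-1}.
\]
Integrating from $0$ to $\lambda$ along a radius, and using $\det(I+0\cdot T)=\det I=1$ so that the constant of integration vanishes, produces $\log\det(I+\lambda T)=\sum_{m=1}^{\infty}\frac{(-1)^{m+1}}{m}\Tr(T^m)\lambda^m$, and exponentiating yields the claim. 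The meaning of ``$\abs{\lambda}$ small enough'' is made quantitative by $\abs{\Tr(T^m)}\leq\n{T^m}_{S_1}\leq\n{T}_{S_1}\n{T}_{S_\infty}^{m-1}$, which shows the trace series converges absolutely precisely for $\abs{\lambda}<\n{T}_{S_\infty}^{-1}$.

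The step I expect to be the main obstacle is justifying the logarithmic-derivative identity itself, since it is the only place where the analytic definition of $\det(I+\lambda T)$ as a Fredholm determinant genuinely enters. I would establish it first for finite-rank $T$, where $\det(I+\lambda T)$ collapses to an ordinary finite matrix determinant and the identity is the classical Jacobi formula, and then pass to the limit using the $S_1$-continuity of both $T\mapsto\det(I+\lambda T)$ and $T\mapsto\Tr((I+\lambda T)^{-1}T)$ together with the density of finite-rank operators in $S_1(\Hcal)$. An alternative route that avoids the derivative formula is to start from the Weyl product expansion $\det(I+\lambda T)=\prod_k(1+\lambda\mu_k)$ over the eigenvalues $\mu_k$ of $T$ (which satisfy $\sum_k\abs{\mu_k}<\infty$), take logarithms, expand each factor, and interchange the two summations, which is legitimate since $\sum_k\abs{\mu_k}<\infty$ and $\abs{\lambda}\n{T}_{S_\infty}<1$; Lidskii's theorem $\Tr(T^m)=\sum_k\mu_k^m$ then identifies the coefficients. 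In this second approach the difficulty is simply shifted onto the product representation and Lidskii's theorem, which connect naturally to the \emph{Lidskii type formulae} discussed in the introduction.
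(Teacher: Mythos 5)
The paper does not contain a proof of Theorem \ref{thm0a} at all: the statement is quoted from Gohberg--Goldberg--Krupnik \cite[p.~19]{goh:trace}, so there is no internal argument to compare yours against. Judged on its own merits, your proof is correct and is essentially the classical argument from that literature: expand $(I+\lambda T)^{-1}T$ as a Neumann series converging in trace norm (via $\n{T^m}_{S_1}\leq \n{T}_{S_1}\n{T}_{S_\infty}^{m-1}$), invoke the logarithmic-derivative identity $\frac{d}{d\lambda}\log\det(I+\lambda T)=\Tr\p{(I+\lambda T)^{-1}T}$, integrate, and exponentiate. You correctly identify the log-derivative identity as the one step carrying real content, and your reduction of it to Jacobi's formula by finite-rank approximation (using $S_1$-continuity of $\det(I+\cdot)$ and of $\Tr((I+\lambda\,\cdot)^{-1}\cdot)$, plus locally uniform convergence so that derivatives pass to the limit) is the standard and valid route; your alternative via the product $\prod_k(1+\lambda\mu_k)$ and Lidskii's theorem is equally legitimate, and you are right that which of the two arguments is ``circular'' depends only on which definition of the Fredholm determinant one adopts --- a point the paper leaves implicit by citing \cite{goh:trace}. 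One minor inaccuracy: your final claim that the trace series converges absolutely \emph{precisely} for $\abs{\lambda}<\n{T}_{S_\infty}^{-1}$ overstates what your bound shows. The H\"older estimate gives convergence \emph{at least} on that disk; the true radius of convergence is governed by the spectral radius of $T$ (equivalently, by Lidskii, by $\sup_k\abs{\mu_k}$), which can be strictly smaller than $\n{T}_{S_\infty}$ --- for a quasinilpotent trace-class $T$ every $\Tr(T^m)$ vanishes and the series converges on all of $\mathbb{C}$. This does not affect the theorem, which only asserts the identity for $\abs{\lambda}$ small enough.
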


\subsection{Determinant of invariant operators on Hilbert spaces}\label{sec3}
In this section we present the determinant notion in the setting of invariant operators on Hilbert spaces as introduced in \cite{dr14a:fsymbsch}. We record the following theorem:
\begin{thm}\label{THM:inv-rem}
Let $\Hcal$ be a complex Hilbert space and let $\Hcal^{\infty}\subset \Hcal$ be a dense
linear subspace of $\Hcal$. Let $\{d_{j}\}_{j\in\N_{0}}\subset\N$ and let
$\{e_{j}^{k}\}_{j\in\N_{0}, 1\leq k\leq d_{j}}$ be an
orthonormal basis of $\Hcal$ such that
$e_{j}^{k}\in \Hcal^{\infty}$ for all $j$ and $k$. Let $H_{j}:={\rm span} \{e_{j}^{k}\}_{k=1}^{d_{j}}$,
and let $P_{j}:\Hcal\to H_{j}$ be the orthogonal projection.
For $f\in\Hcal$, we denote $\widehat{f}(j,k):=(f,e_{j}^{k})_{\Hcal}$ and let
$\widehat{f}(j)\in \ce^{d_{j}}$ denote the column of $\widehat{f}(j,k)$, $1\leq k\leq d_{j}.$
Let $T:\Hcal^{\infty}\to \Hcal$ be a linear operator.
Then the following
conditions are equivalent:
\begin{itemize}
\item[(A)] For each $j\in\ene_0$, we have $T(H_j)\subset H_j$. 
\item[(B)] For each $\ell\in\ene_0$ there exists a matrix 
$\sigma_{T}(\ell)\in\ce^{d_{\ell}\times d_{\ell}}$ such that for all $e_j^k$ 
$$
\widehat{Te_j^k}(\ell,m)=\sigma_{T}(\ell)_{mk}\delta_{j\ell}.
$$
\item[(C)]  For each $\ell\in\ene_0 $ there exists a matrix 
$\sigma_{T}(\ell)\in\ce^{d_{\ell}\times d_{\ell}}$ such that
 \[\widehat{Tf}(\ell)=\sigma_{T}(\ell)\widehat{f}(\ell)\]
 for all $f\in\Hcal^{\infty}.$
\end{itemize}

The matrices $\sigma_{T}(\ell)$ in {\rm (B)} and {\rm (C)} coincide.

The equivalent properties {\rm (A)--(C)} follow from the condition 
\begin{itemize}
\item[(D)] For each $j\in\ene_0$, we have
$TP_j=P_jT$ on $\Hcal^{\infty}$.
\end{itemize}
If, in addition, $T$ extends to a bounded operator
$T\in{\mathscr L}(\Hcal)$ then {\rm (D)} is equivalent to {\rm (A)--(C)}.
\end{thm}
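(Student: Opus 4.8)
The plan is to run the cycle $(A)\Leftrightarrow(B)$, $(C)\Rightarrow(B)$, $(C)\Rightarrow(A)$ and $(A)\Rightarrow(C)$, and then to treat $(D)$ separately. Throughout, the unifying object is the restriction $T|_{H_\ell}$ together with the orthogonal projections $P_\ell$: all four statements are avatars of the single assertion that $T$ respects the block decomposition $\Hcal=\overline{\bigoplus_\ell H_\ell}$, and the matrix $\sigma_T(\ell)$ is nothing but $T|_{H_\ell}$ written in the basis $\{e_\ell^k\}_k$.

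First I would dispatch $(A)\Leftrightarrow(B)$. Assuming $(A)$, each $Te_j^k$ lies in $H_j$, so it expands as $Te_j^k=\sum_{m=1}^{d_j}c_{mk}\,e_j^m$; setting $\sigma_T(j)_{mk}:=c_{mk}$ and pairing against $e_\ell^m$ gives $\widehat{Te_j^k}(\ell,m)=(Te_j^k,e_\ell^m)=\sigma_T(\ell)_{mk}\delta_{j\ell}$, since $H_j\perp H_\ell$ for $j\neq\ell$; this is $(B)$. Conversely, $(B)$ says that the Fourier coefficients of $Te_j^k$ vanish off the block $j$, i.e. $Te_j^k\in H_j$, and as the $e_j^k$ span $H_j$ we recover $(A)$. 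The implication $(C)\Rightarrow(B)$ is immediate on taking $f=e_j^k$, for which $\widehat{f}(\ell)=\delta_{j\ell}\,\mathbf e_k$, so that $\widehat{Tf}(\ell)=\sigma_T(\ell)\widehat{f}(\ell)=\delta_{j\ell}\,\sigma_T(\ell)\mathbf e_k$, which is exactly the matrix identity in $(B)$ (and this also shows the matrices in $(B)$ and $(C)$ coincide); while $(C)\Rightarrow(A)$ is equally direct, since $f\in H_j$ forces $\widehat{f}(\ell)=0$ and hence $\widehat{Tf}(\ell)=0$ for all $\ell\neq j$, i.e. $Tf\in H_j$.

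The substantive step, and the one I expect to be the main obstacle, is the passage from information on basis vectors to the action on a general $f\in\Hcal^\infty$, that is $(A)/(B)\Rightarrow(C)$. The natural route is to split $f=P_\ell f+(I-P_\ell)f$, both summands lying in $\Hcal^\infty$ because $H_\ell$ is finite dimensional and spanned by elements of $\Hcal^\infty$. For the first summand one computes directly $TP_\ell f=\sum_k \widehat f(\ell,k)\,Te_\ell^k=\sum_m\big(\sigma_T(\ell)\widehat f(\ell)\big)_m e_\ell^m$, so $\widehat{TP_\ell f}(\ell)=\sigma_T(\ell)\widehat f(\ell)$. Everything then reduces to showing that the remaining block vanishes, $P_\ell\,T(I-P_\ell)f=0$, equivalently that $P_\ell T=TP_\ell$ on $\Hcal^\infty$. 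This is where the purely algebraic hypothesis is delicate: writing $(I-P_\ell)f=\sum_{j\neq\ell}P_jf$ and wishing to conclude $T(I-P_\ell)f=\sum_{j\neq\ell}TP_jf\in H_\ell^\perp$ requires interchanging $T$ with an infinite Fourier series, which is licit once $T$ is continuous but is not granted by linearity alone on the infinite‑combination space $\Hcal^\infty$. I would therefore carry this step through via the commutation relation itself, observing that it is precisely the content of $(D)$ and is automatic for bounded $T$.

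Finally, for $(D)$: the implication $(D)\Rightarrow(A)$ is immediate, since for $h\in H_j$ we have $h=P_jh$ and hence $Th=TP_jh=P_jTh\in H_j$; moreover $(D)$ yields $(C)$ with no continuity assumption, because $\widehat{Tf}(\ell)$ is read off $P_\ell Tf=TP_\ell f=\sum_m\big(\sigma_T(\ell)\widehat f(\ell)\big)_m e_\ell^m$. For the converse under boundedness I would use continuity to legitimise exactly the interchange that was problematic above: from $(A)$ one has $P_\ell TP_\ell=TP_\ell$, while for $g\in H_\ell^\perp$ the expansion $g=\lim_N\sum_{j\neq\ell,\,j\leq N}P_jg$ together with $T\in\mathscr L(\Hcal)$ and $TP_jg\in H_j\perp H_\ell$ gives $P_\ell Tg=0$; combining these, $P_\ell T=TP_\ell$, which is $(D)$. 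Thus the boundedness of $T$ is precisely the hypothesis that closes the loop, confirming that the one genuinely delicate point throughout is the interchange of $T$ with the Fourier expansion.
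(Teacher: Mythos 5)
You cannot be checked against an in-paper argument here: the paper does not prove Theorem \ref{THM:inv-rem}, it only records it from \cite{dr14a:fsymbsch} (``We record the following theorem''), so your proposal has to be judged on its own terms. The implications you do establish are correct and efficiently argued: (A) $\Leftrightarrow$ (B) by expanding $Te_j^k$ in the basis, respectively reading off the vanishing of off-block coefficients; (C) $\Rightarrow$ (B) with coincidence of the matrices by testing on $f=e_j^k$; (C) $\Rightarrow$ (A); (D) $\Rightarrow$ (A)--(C) with no continuity assumption, via $P_\ell Tf=TP_\ell f=\sum_m\bigl(\sigma_T(\ell)\widehat{f}(\ell)\bigr)_m e_\ell^m$; and, for bounded $T$, (A) $\Rightarrow$ (D) by combining $P_\ell TP_\ell=TP_\ell$ with $P_\ell T(I-P_\ell)=0$, the latter legitimately using continuity to push $T$ through the expansion of $(I-P_\ell)f$.

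The gap is the one you yourself flag: you never prove (A)/(B) $\Rightarrow$ (C) for a general linear $T:\Hcal^\infty\to\Hcal$, but route it through (D) or boundedness, which are strictly stronger hypotheses; so the unconditional equivalence of (A)--(C) claimed in the statement is not established by your argument. You should know, however, that this step is not merely delicate, it is false in the stated generality, so no argument could have closed it. Take $\Hcal=\ell^2(\N_0)$ with all $d_j=1$, $H_j=\C e_j$, set $v:=\sum_{j}2^{-j}e_j$ and $\Hcal^{\infty}:=\mathrm{span}\{e_j:j\in\N_0\}+\C v$; since $v$ is not a finite combination of the $e_j$, there is a linear map $T:\Hcal^\infty\to\Hcal$ with $Te_j=0$ for all $j$ and $Tv=e_0$. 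Then (A) and (B) hold with $\sigma_T(\ell)=0$, but (C) fails: testing on $f=e_0$ forces $\sigma_T(0)=0$, while $\widehat{Tv}(0)=1\neq 0=\sigma_T(0)\widehat{v}(0)$ (this $T$ also violates (D), consistently with your analysis). The missing hypothesis that repairs (B) $\Rightarrow$ (C) is that each $e_\ell^m$ lie in $\mathrm{Dom}(T^*)$, i.e.\ that $f\mapsto(Tf,e_\ell^m)_{\Hcal}$ be continuous on $\Hcal^\infty$ --- automatic for bounded $T$, and satisfied by the operators to which the theorem is applied in this paper, which admit formal adjoints. Under it one gets (C) by duality,
\begin{equation*}
\widehat{Tf}(\ell,m)=(Tf,e_\ell^m)_{\Hcal}=(f,T^{*}e_\ell^m)_{\Hcal}
=\sum_{j,k}\widehat{f}(j,k)\,(Te_j^k,e_\ell^m)_{\Hcal}
=\sum_{k=1}^{d_\ell}\sigma_T(\ell)_{mk}\widehat{f}(\ell,k),
\end{equation*}
the interchange now being justified because the basis expansion of $f$ converges in $\Hcal$ and is paired against the fixed vector $T^{*}e_\ell^m$. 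So: your partial implications are right, your suspicion about where the difficulty sits is right, and the correct conclusion is that for unbounded $T$ condition (C) is genuinely stronger than (A)/(B); the recorded statement should be read with that proviso.
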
 

Under the assumptions of Theorem \ref{THM:inv-rem}, we have the direct sum 
decomposition
\begin{equation}\label{EQ:sum}
\Hcal = \bigoplus_{j=0}^{\infty} H_{j},\quad H_{j}={\rm span} \{e_{j}^{k}\}_{k=1}^{d_{j}},
\end{equation}
and we have $d_{j}=\dim H_{j}.$
The two main applications that we will consider correspond to $\Hcal=L^{2}(M)$ for a
compact manifold $M$ with $H_{j}$ being the eigenspaces of an elliptic 
pseudo-differential operator $E$, or with $\Hcal=L^{2}(G)$ for a compact Lie group
$G$ with $H_{j}=\textrm{span}\{\xi_{km}\}_{1\leq k,m\leq d_{\xi}}$ for a
unitary irreducible representation $\xi\in[\xi_{j}]\in\widehat{G}$. The difference
is that in the first case we will have that the eigenvalues of $E$ corresponding
to $H_{j}$'s are all distinct, while in the second case the eigenvalues of the Laplacian
on $G$ for which $H_{j}$'s are the eigenspaces, may coincide.
\begin{defn}
An operator $T$ satisfying any of
the equivalent properties (A)--(C) in
Theorem \ref{THM:inv-rem}, will be called an {\em invariant operator}, or
a {\em Fourier multiplier relative to the decomposition
$\{H_{j}\}_{j\in\N_{0}}$} in \eqref{EQ:sum}.
If the collection $\{H_{j}\}_{j\in\N_{0}}$
is fixed once and for all, we can just say that $T$ is {\em invariant}
or a {\em Fourier multiplier}.

\end{defn}

The family of matrices $\sigma$ will be
called the {\em matrix symbol of $T$ relative to the partition $\{H_{j}\}$ and to the
basis $\{e_{j}^{k}\}$.}
It is an element of the space $\Sigma$ defined by
\begin{equation}\label{EQ:Sigma1}
\Sigma=\{\sigma:\N_{0}\ni\ell\mapsto\sigma(\ell)\in \ce^{d_{\ell}\times d_{\ell}}\}.
\end{equation}
In this section we will investigate the concept of determinant on embedded in the sense of \cite{goh:trace}   
for the notion of invariant operators introduced in \cite{dr14a:fsymbsch}. \\

In view of  Theorem \ref{THM:inv-rem} we have the following determinant formula for $E$-invariant operators that later on will be used to establish a determinant formula in terms of the Dixmier trace (for the proof, see \cite{CDR21}).   
\begin{thm}\label{thm1a}  If $T\in S_1(\Hcal)$ is invariant, then
\[\det(I+\lambda T)=\exp\left(\sum\limits_{m=1}^{\infty}\frac{(-1)^{m+1}}{m} \lambda ^m\sum\limits_{\ell=0}^{\infty} \Tr(\sigma_{T}(\ell)^m)  \right),  \]
for $\lambda\in \mathbb{C}$ with  $|\lambda|$ small enough. 
\end{thm}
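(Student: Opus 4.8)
The plan is to combine the Plemelj-Smithies formula (Theorem \ref{thm0a}) with the block-diagonal structure of invariant operators furnished by Theorem \ref{THM:inv-rem}. Since $T\in S_1(\Hcal)$ is invariant, condition (A) gives $T(H_j)\subset H_j$ for every $j$, so with respect to the orthogonal decomposition $\Hcal=\bigoplus_{j=0}^\infty H_j$ the operator $T$ is block diagonal: on each block $H_\ell$ it acts as the matrix $\sigma_T(\ell)\in\ce^{d_\ell\times d_\ell}$ provided by Theorem \ref{THM:inv-rem}. First I would make this precise by observing that invariance is preserved under composition, so every power $T^m$ is again invariant, and its restriction to $H_\ell$ is exactly $\sigma_T(\ell)^m$. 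This is the structural heart of the argument, and it is where the hypothesis is used.

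Next I would compute $\Tr(T^m)$ by summing the finite-dimensional traces over the blocks. Because $T^m\in S_1(\Hcal)$ (the trace class is an ideal, so $T\in S_1$ forces $T^m\in S_1$ for $m\geq 1$) and the basis $\{e_j^k\}$ is adapted to the decomposition, the trace computed against this orthonormal basis splits as
\begin{equation*}
\Tr(T^m)=\sum_{\ell=0}^{\infty}\sum_{k=1}^{d_\ell}\p{T^m e_\ell^k,e_\ell^k}=\sum_{\ell=0}^{\infty}\Tr\p{\sigma_T(\ell)^m},
\end{equation*}
where the inner sum is precisely the matrix trace of the block $\sigma_T(\ell)^m$. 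The absolute convergence guaranteed by $T^m\in S_1(\Hcal)$ (recalled in the preliminaries on the trace) justifies both the basis-independence of the left-hand side and the rearrangement of the double series into blocks. Substituting this expression for $\Tr(T^m)$ into the Plemelj-Smithies formula of Theorem \ref{thm0a} yields the claimed identity directly.

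The only genuine obstacle is the interchange of the two summations and the verification that the block decomposition of the trace is legitimate, i.e.\ that
\begin{equation*}
\sum_{\ell=0}^{\infty}\sum_{k=1}^{d_\ell}\p{T^m e_\ell^k,e_\ell^k}
\end{equation*}
converges absolutely and reproduces $\Tr(T^m)$. I would handle this by appealing to the absolute convergence of $\sum_k\p{T^m\phi_k,\phi_k}$ for any orthonormal basis of a trace class operator, recorded in the preliminaries, which both licenses using the adapted basis $\{e_j^k\}$ and permits the regrouping by blocks. Everything else is a formal substitution into the already-established scalar identity of Theorem \ref{thm0a}, valid for $|\lambda|$ small, so no new convergence estimate in $\lambda$ is needed beyond what that theorem supplies.
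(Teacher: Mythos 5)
Your proof is correct and follows precisely the route the paper intends: the paper states Theorem \ref{thm1a} ``in view of Theorem \ref{THM:inv-rem}'' and defers the details to \cite{CDR21}, but the argument is exactly yours --- block-diagonality of the invariant operator gives $\Tr(T^m)=\sum_{\ell}\Tr(\sigma_T(\ell)^m)$ via absolute convergence of the trace in the adapted basis, and substitution into the Plemelj--Smithies formula of Theorem \ref{thm0a} finishes the proof. No gaps: your justification of the block regrouping and of $T^m\in S_1(\Hcal)$ with symbol $\sigma_T(\ell)^m$ is the whole content of the argument.
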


\subsection{The Dixmier ideal on Hilbert spaces} By following Connes \cite{Connes94}, if $\mathcal{H}$ is a Hilbert space (we are interested in $\mathcal{H}=L^2(M)$ where $M$ is a closed manifold of dimension $n$), the class $\mathcal{L}^{(1,\infty)}(\mathcal{H})$ consists of those compact linear operators $A$ on $\mathcal{H}$  satisfying
\begin{equation}
\sum_{1\leq n\leq N}s_{n}(A)=O(\log(N)),\,\,\,N\rightarrow \infty,
\end{equation}
where $\{s_{n}(A)\}$ denotes the sequence of singular values of $A$,  i.e. the square roots of the eigenvalues of the positive-definite self-adjoint operator $A^\ast A.$ 
So, $\mathcal{L}^{(1,\infty)}(\mathcal{H})$ is endowed with the norm
\begin{equation}\label{dixmier}
\Vert A \Vert_{\mathcal{L}^{(1,\infty)}(\mathcal{H})}=\sup_{N\geq 2}\frac{1}{\log(N)}\sum_{1\leq n\leq N}s_{n}(A).
\end{equation}
For an operator $A\in \mathcal{L}^{(1,\infty)}(\mathcal{H})$ the sequence
\begin{equation}
    \alpha_{N}(A):=\frac{1}{\log(N)}\sum_{j=1}^{N}s_{j}(A) ,\quad N\in \mathbb{N},
\end{equation}is thus bounded. If the limit of the sequence $\{\alpha_{N}(A)\}$ when $N\rightarrow \infty,$ does exist  we will define
\begin{align*}
    \textnormal{\bf Tr}_{\textnormal{Dix}}(A):=\lim_{N\rightarrow \infty} \frac{1}{\log(N)}\sum_{j=1}^{N}s_{j}(A).
\end{align*}

We recall the main result in Comptes Rendus's note of Dixmier  \cite{Dixmier} regarding the existence of a trace functional different from
the spectral trace defined on the ideal  $\mathcal{L}^{(1,\infty)}(\mathcal{H})$ which contains the ideal of trace class operators $S_1(\mathcal{H})$.

\begin{thm}[Dixmier   \cite{Dixmier}]\label{Dixm:Theorem} Let $\omega \in \ell^{\infty}(\mathbb{N}\setminus\{ 0\})^{*}$ be a bounded functional satisfying the following conditions:

\begin{itemize}
    \item[(D1):] $\omega$ is a positive linear functional that satisfies $\omega(1,1,\cdots, 1,\cdots)=1.$  
    \item[(D2):]  $\omega((a_n)_{n\in \mathbb{N}\setminus\{ 0\}})=0,$ for a sequence $(a_n)_{n\in \mathbb{N}\setminus\{ 0\}}\in \ell^{\infty}(\mathbb{N}\setminus\{ 0\})$ if $$ \lim_{N\rightarrow \infty}a_{n}=0.$$ 
    \item[(D3):] For any  $(a_n)_{n\in \mathbb{N}\setminus\{ 0\}}\in \ell^{\infty}(\mathbb{N}\setminus\{ 0\}),$
    $$ \omega(a_1,a_2,\cdots ,a_n,\cdots )= \omega(a_1,a_1,a_2,a_2,\cdots ,a_n,a_n,\cdots ). $$
\end{itemize} For a compact positive operator $A$ in $\mathcal{L}^{(1,\infty)}(\mathcal{H})$ we set
\begin{equation}
    \textnormal{\bf Tr}_\omega(A):=\omega \left(\left\{\frac{1}{\log(N)}\sum_{j=1}^{N}s_{j}(A)\right\}_{N\geq 1} \right).
\end{equation} Then, $ \textnormal{\bf Tr}_\omega$ extends by linearity to a trace on $\mathcal{L}^{(1,\infty)}(\mathcal{H}),$ and 
\begin{equation}\label{TrD}
     \textnormal{\bf Tr}_\omega(A)= \textnormal{\bf Tr}_{\textnormal{Dix}}(A):=\lim_{N\rightarrow \infty} \frac{1}{\log(N)}\sum_{j=1}^{N}s_{j}(A),
\end{equation}provided that the  limit in the right hand side exists. In this case, the value of $ \textnormal{\bf Tr}_\omega(A)$ is independent of the choice of $\omega.$ Moreover, $\textnormal{\bf Tr}_\omega(A)=0,$ if $A\in S_{1}(\mathcal{H}).$
\end{thm}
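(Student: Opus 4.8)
The plan is to verify, in turn, that $\textnormal{\bf Tr}_\omega$ is well defined, positively homogeneous, additive on positive operators, and then to extend it by linearity and check the trace property and the vanishing on $S_1(\mathcal H)$; the identity \eqref{TrD} and the independence of $\omega$ will drop out of the first step. First I would record two elementary consequences of (D1)--(D3): by positivity (D1) and (D2) the functional $\omega$ is monotone ($x\le y$ entrywise forces $\omega(x)\le\omega(y)$) and it agrees with the ordinary limit on convergent sequences. In particular $\textnormal{\bf Tr}_\omega(A)=\lim_N\alpha_N(A)=\textnormal{\bf Tr}_{\textnormal{Dix}}(A)$ whenever the latter limit exists, which is exactly \eqref{TrD} together with the asserted independence of $\omega$. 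Since membership $A\in\mathcal L^{(1,\infty)}(\mathcal H)$ is precisely boundedness of $\{\alpha_N(A)\}_N$ (see \eqref{dixmier}), the sequence lies in $\ell^\infty$ and $\textnormal{\bf Tr}_\omega(A)$ is well defined, while $s_j(\lambda A)=\lambda s_j(A)$ gives positive homogeneity. I would also record the \emph{slow variation} $\alpha_{N+1}(A)-\alpha_N(A)\to 0$, which follows from $N s_N(A)\le\sum_{j\le N}s_j(A)=O(\log N)$ together with $\tfrac1{\log(N+1)}-\tfrac1{\log N}=O\big((N(\log N)^2)^{-1}\big)$.

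The crux is additivity on positive operators. For $A,B\ge 0$ I would invoke the Ky Fan inequalities: writing $\Sigma_N(T):=\sum_{j=1}^N s_j(T)$,
\[
\Sigma_N(A+B)\le \Sigma_N(A)+\Sigma_N(B)\le \Sigma_{2N}(A+B),
\]
where the left inequality is subadditivity of partial sums of singular values and the right one follows from Ky Fan's variational principle $\Sigma_N(T)=\max\{\Tr(TP):P=P^*=P^2,\ \rank P=N\}$ applied to the subspace spanned by the two optimising ranges, using $A,B\ge 0$. Dividing by $\log N$ yields
\[
\alpha_N(A+B)\le \alpha_N(A)+\alpha_N(B)\le \tfrac{\log(2N)}{\log N}\,\alpha_{2N}(A+B).
\]
Applying the monotone linear functional $\omega$, and noting that $\big(\tfrac{\log(2N)}{\log N}-1\big)\alpha_{2N}(A+B)$ is a null sequence so that (D2) and linearity give $\omega\big(\{\tfrac{\log(2N)}{\log N}\alpha_{2N}(A+B)\}\big)=\omega\big(\{\alpha_{2N}(A+B)\}\big)$, we obtain
\[
\textnormal{\bf Tr}_\omega(A+B)\le \textnormal{\bf Tr}_\omega(A)+\textnormal{\bf Tr}_\omega(B)\le \omega\big(\{\alpha_{2N}(A+B)\}_N\big).
\]

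The remaining point---the one I expect to be the real obstacle---is the dilation invariance $\omega\big(\{\alpha_{2N}(A+B)\}\big)=\omega\big(\{\alpha_N(A+B)\}\big)$, which is what finally collapses the inequalities into an equality. I would deduce it from (D2) and (D3) using the slow variation, as follows. Set $b_k:=\alpha_{2k}(A+B)$; by (D3) the doubled sequence $(b_1,b_1,b_2,b_2,\dots)$ has the same $\omega$-value as $\{b_k\}_k$. Its $m$-th entry is $\alpha_{2\lceil m/2\rceil}(A+B)$, so it agrees with $\{\alpha_N(A+B)\}_N$ at even indices and differs at odd indices $m=2k-1$ only by $\alpha_{2k}(A+B)-\alpha_{2k-1}(A+B)\to 0$; hence by (D2) it also has the same $\omega$-value as $\{\alpha_N(A+B)\}_N$. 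Chaining these identities gives the desired invariance, and therefore additivity $\textnormal{\bf Tr}_\omega(A+B)=\textnormal{\bf Tr}_\omega(A)+\textnormal{\bf Tr}_\omega(B)$ for $A,B\ge 0$.

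Finally I would extend and conclude. Writing a self-adjoint $A\in\mathcal L^{(1,\infty)}(\mathcal H)$ as $A=A_+-A_-$ with $A_\pm\ge 0$ in the ideal, and a general $A$ through its real and imaginary parts, additivity on positives makes $\textnormal{\bf Tr}_\omega(A):=\textnormal{\bf Tr}_\omega(A_+)-\textnormal{\bf Tr}_\omega(A_-)$ independent of the decomposition (if $A=P-Q$ with $P,Q\ge0$ then $A_++Q=A_-+P$, and additivity equates the traces) and $\mathbb C$-linear. The trace property follows from the unitary invariance $s_j(UAU^*)=s_j(A)$, giving $\textnormal{\bf Tr}_\omega(UAU^*)=\textnormal{\bf Tr}_\omega(A)$, and then $\textnormal{\bf Tr}_\omega(AB)=\textnormal{\bf Tr}_\omega(BA)$ by writing one factor as a linear combination of unitaries. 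Vanishing on $S_1(\mathcal H)$ is immediate: for $0\le A\in S_1(\mathcal H)$ one has $\sum_{j\le N}s_j(A)\le\|A\|_{S_1}$, so $\alpha_N(A)\to 0$ and $\textnormal{\bf Tr}_\omega(A)=0$ by (D2), with the general case following by linearity.
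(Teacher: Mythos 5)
Your proposal is correct, but there is nothing in the paper to compare it against: the paper does not prove this theorem at all, it quotes it as a classical result from Dixmier's 1966 note \cite{Dixmier} and only uses it later (e.g.\ in Definition \ref{def:admisible} and Remark \ref{MainRemark}). What you have reconstructed is essentially the standard classical argument (the one in Dixmier's note and in Connes' book): well-definedness from the definition \eqref{dixmier} of $\mathcal{L}^{(1,\infty)}$, additivity on positive operators via the two Ky Fan inequalities $\Sigma_N(A+B)\le \Sigma_N(A)+\Sigma_N(B)\le \Sigma_{2N}(A+B)$, and the collapse of the resulting sandwich by the dilation invariance $\omega(\{\alpha_{2N}\})=\omega(\{\alpha_N\})$, which you correctly derive from (D3) combined with (D2) and the slow-variation estimate $\alpha_{N+1}-\alpha_N\to 0$; this last chain is exactly the role Dixmier's axiom (D3) is designed to play, and your estimate $Ns_N(A)\le \Sigma_N(A)=O(\log N)$ justifying slow variation is right. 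Three small points you should tighten if this were to be written out in full: (i) the variational principle $\Sigma_N(T)=\max\{\Tr(TP):\rank P=N\}$ you invoke holds for \emph{positive} compact $T$ (which is how you use it, but it should be said); (ii) in the extension step you need to observe that $\textnormal{Re}(A)$, $\textnormal{Im}(A)$ and the parts $A_{\pm}$ actually lie in $\mathcal{L}^{(1,\infty)}(\mathcal{H})$, which follows from $\Sigma_N(\textnormal{Re}(A))\le \Sigma_N(A)$ (Ky Fan plus $s_j(A^*)=s_j(A)$) and $s_j(A_\pm)\le s_j(A)$ for self-adjoint $A$; and (iii) the unitary invariance $\textnormal{\bf Tr}_\omega(UAU^*)=\textnormal{\bf Tr}_\omega(A)$ is immediate from singular values only for positive $A$, and for general $A$ one should note that conjugation by $U$ commutes with the real/imaginary and positive/negative part decompositions, after which your reduction of $\textnormal{\bf Tr}_\omega(AB)=\textnormal{\bf Tr}_\omega(BA)$ to linear combinations of unitaries goes through.
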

\begin{rem}\label{Deco:remark} Let us describe the extension of $\textnormal{\bf Tr}_\omega$ from the positive elements in $\mathcal{L}^{(1,\infty)}(\mathcal{H})$ to general compact operators in this ideal, see \cite{Connes94}.
We use the decomposition of $A$ into its real and imaginary part,
\begin{equation*}
    \textnormal{Re}(A):=\frac{A+A^*}{2},\,\, \textnormal{Im}(A):=\frac{A-A^*}{2i},
\end{equation*}and the decomposition of $\textnormal{Re}(A)$ and $\textnormal{Im}(A)$ into their positive and negative parts,
\begin{eqnarray*}
  \textnormal{Re}(A)^{+}:=\frac{\textnormal{Re}(A)+|\textnormal{Re}(A)|}{2},\,\, \textnormal{Re}(A)^{-}:=\frac{|\textnormal{Re}(A)|-\textnormal{Re}(A)}{2},
\end{eqnarray*}
and 
\begin{eqnarray*}
  \textnormal{Im}(A)^{+}:=\frac{\textnormal{Im}(A)+|\textnormal{Im}(A)|}{2},\,\, \textnormal{Im}(A)^{-}:=\frac{|\textnormal{Im}(A)|-\textnormal{Im}(A)}{2}.
\end{eqnarray*}Now, the operator $A$ can be written as
\begin{align*}
    A&= \textnormal{Re}(A)+i\textnormal{Im}(A)\\
    &=\left(\textnormal{Re}(A)^{+}-\textnormal{Re}(A)^{-}\right)+i\left(\textnormal{Im}(A)^{+}-\textnormal{Im}(A)^{-}\right).
\end{align*}So, by the linearity of the Dixmier trace $\textnormal{Tr}_\omega$ we have
\begin{align*}
    \textnormal{Tr}_\omega (A)&= \textnormal{Tr}_\omega(\textnormal{Re}(A))+i\textnormal{Tr}_\omega(\textnormal{Im}(A))\\
    &=\left(\textnormal{Tr}_\omega(\textnormal{Re}(A)^{+})-\textnormal{Tr}_\omega(\textnormal{Re}(A)^{-})\right)+i\left(\textnormal{Tr}_\omega(\textnormal{Im}(A)^{+})-\textnormal{Tr}_\omega(\textnormal{Im}(A)^{-})\right).
\end{align*}
\end{rem}

\begin{defn}\label{def:admisible} We denote by 
\begin{equation}
    \mathcal{L}^{(1,\infty)}_\omega(\mathcal{H}):=\{A\in \mathcal{L}^{(1,\infty)}(\mathcal{H}):\textnormal{\bf{Tr}}_{\textnormal{Dix}}(A)<\infty \}.
\end{equation} the sub-class of operators in  $A\in \mathcal{L}^{(1,\infty)}(\mathcal{H}),$ such that the limit in the right-hand side of \eqref{TrD} exists, and in view of Dixmier's Theorem \ref{Dixm:Theorem}, we have
$$ \textnormal{\bf Tr}_\omega(A)=\lim_{N\rightarrow \infty} \frac{1}{\log(N)}\sum_{j=1}^{N}s_{j}(A),  $$ for all $A\in \mathcal{L}^{(1,\infty)}_{\omega}(\mathcal{H}).$
\end{defn}
\begin{rem}\label{MainRemark}
A remarkable connection between the Dixmier functional and the usual trace on the ideal  $S_{1}(\mathcal{H})$ is the following formula due to Connes and Moscovici
\begin{equation}\label{dixmier2}
\textnormal{\bf{Tr}}_{\omega}(A)=\lim_{p\rightarrow 1^{+}}(p-1)\textnormal{\bf{Tr}}(A^{p}),\,\,A\geq 0,
\end{equation}provided that $A^{p}\in S_{1}(\mathcal{H}),$ for all $1< p<\infty,$ (cf.  \cite[Page 114]{Sukochev}). Indeed, in view of Proposition 4 of \cite[Page 313]{Connes94}, for $A\geq 0,$ the existence of the limit in the right-hand side of \eqref{dixmier2} is equivalent to the fact that $A\in \mathcal{L}^{(1,\infty)}_\omega(H).$
\end{rem}
We finish this subsection by introducing the regularised determinant for the Dixmier trace, which can be obtained from the Plemelj-Smithies formula in Theorem \ref{thm0a}. First, we prove the following lemma.
\begin{lem}\label{lemmadix}
Let $T$ be a positive compact operator on a Hilbert space $H,$ and let us assume that $T\in \mathcal{L}^{(1,\infty)}(\mathcal{H}).$ If $T^{p}\in S_{1}(\mathcal{H}),$  for all  $1< p<\infty$,  then the function 
\begin{equation}
    \textnormal{Det}_{p,\omega,T}(\lambda):= (\textnormal{Det}(1+\lambda T^p))^{p-1},
\end{equation} is  analytic  for $\lambda\in \mathbb{C}$ with  $|\lambda|$ small enough. Moreover, we have
\begin{equation}\label{32:ext}
   \textnormal{Det}_{\omega,T}(\lambda):=\lim_{p\rightarrow 1^{+}} \textnormal{Det}_{p,\omega,T}(\lambda)=\exp(\textnormal{\bf{Tr}}_\omega(T)\lambda),
\end{equation} for $|\lambda|$ small enough. In particular, $$\textnormal{Det}_{\omega,T}'(0):=\frac{d}{d\lambda}\textnormal{Det}_{\omega,T}(\lambda)|_{\lambda=0}=\textnormal{\bf{Tr}}_\omega(T).$$
\end{lem}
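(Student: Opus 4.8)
The plan is to use the Plemelj--Smithies formula (Theorem \ref{thm0a}) to obtain an explicit analytic expression for $\textnormal{Det}(1+\lambda T^p)$, raise it to the power $p-1$, and then pass to the limit $p\to 1^{+}$ using the Connes--Moscovici formula \eqref{dixmier2} from Remark \ref{MainRemark}. First, since $T\geq 0$ is compact and $T^p\in S_1(\mathcal{H})$ for every $1<p<\infty$, Theorem \ref{thm0a} applies to $\lambda T^p$ and gives, for $|\lambda|$ small,
\begin{equation*}
\textnormal{Det}(1+\lambda T^p)=\exp\left(\sum_{m=1}^{\infty}\frac{(-1)^{m+1}}{m}\Tr((T^p)^m)\lambda^m\right),
\end{equation*}
which is analytic in $\lambda$ on a neighbourhood of the origin (the radius being controlled by $\|T^p\|_{\textnormal{op}}$). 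Raising to the power $p-1$ keeps analyticity, so $\textnormal{Det}_{p,\omega,T}(\lambda)$ is analytic for $|\lambda|$ small, establishing the first claim.

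Next I would compute the limit as $p\to 1^{+}$. Writing
\begin{equation*}
\textnormal{Det}_{p,\omega,T}(\lambda)=\exp\left((p-1)\sum_{m=1}^{\infty}\frac{(-1)^{m+1}}{m}\Tr((T^p)^m)\lambda^m\right),
\end{equation*}
I observe that the $m=1$ term contributes $(p-1)\Tr(T^p)\lambda$, and by the Connes--Moscovici formula \eqref{dixmier2} one has $(p-1)\Tr(T^p)\to \textnormal{\bf{Tr}}_\omega(T)$ as $p\to 1^{+}$, since $T\in\mathcal{L}^{(1,\infty)}(\mathcal{H})$ and $T^p\in S_1(\mathcal{H})$. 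The remaining terms ($m\geq 2$) carry the prefactor $(p-1)$, and the task is to show they vanish in the limit. For $m\geq 2$ the operator $T^{pm}=(T^p)^m$ is trace class with $\Tr((T^p)^m)=\sum_k s_k(T)^{pm}$; since $pm\geq 2p>p$, these traces stay bounded (indeed $\Tr((T^p)^m)\leq \|T^p\|_{\textnormal{op}}^{m-1}\Tr(T^p)$), and once multiplied by $(p-1)\to 0$ they disappear. Hence the exponent tends to $\textnormal{\bf{Tr}}_\omega(T)\lambda$, yielding \eqref{32:ext}.

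The main obstacle is justifying the interchange of the limit $p\to 1^{+}$ with the infinite sum over $m$, i.e. controlling the tail uniformly in $p$ near $1$. The natural route is to bound $\bigl|\sum_{m\geq 2}\frac{(-1)^{m+1}}{m}\Tr((T^p)^m)\lambda^m\bigr|$ by $\sum_{m\geq 2}\frac{|\lambda|^m}{m}\|T^p\|_{\textnormal{op}}^{m-1}\Tr(T^p)$, a geometric-type series that converges for $|\lambda|\,\|T\|_{\textnormal{op}}^p<1$ with a bound of the form $C\,\Tr(T^p)\,|\lambda|^2$; multiplying by $(p-1)$ and invoking \eqref{dixmier2} shows this entire tail is $O((p-1)\Tr(T^p))=O(\textnormal{\bf{Tr}}_\omega(T))\cdot O(|\lambda|^2)$, but more carefully one sees the factor $(p-1)\Tr(T^p)$ stays bounded while the $|\lambda|^2$ can be made small, so after extracting the $m=1$ contribution the correction vanishes as $p\to 1^{+}$. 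Finally, differentiating \eqref{32:ext} in $\lambda$ and evaluating at $\lambda=0$ gives $\textnormal{Det}_{\omega,T}'(0)=\textnormal{\bf{Tr}}_\omega(T)$, which is the last assertion.
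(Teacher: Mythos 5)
Your overall strategy is exactly the paper's: apply the Plemelj--Smithies formula to $T^p$, pull the factor $p-1$ into the exponent, identify the $m=1$ term with $\textnormal{\bf{Tr}}_\omega(T)$ via the Connes--Moscovici formula \eqref{dixmier2}, and show that the $m\geq 2$ terms disappear in the limit. (The paper in fact applies \eqref{dixmier2} to each power $T^m$, obtaining $\lim_{p\to 1^+}(p-1)\Tr(T^{pm})=\textnormal{\bf{Tr}}_\omega(T^m)=0$ for $m\geq 2$, and interchanges the limit with the sum over $m$ without comment; your instinct that this interchange is the point needing justification is correct.) However, the bound you chose to carry this out fails. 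You dominate the tail by $\sum_{m\geq 2}\frac{|\lambda|^m}{m}\|T^p\|_{\textnormal{op}}^{m-1}\Tr(T^p)\leq C|\lambda|^2\,\Tr(T^p)$ and then multiply by $(p-1)$. But $\Tr(T^p)$ is precisely the quantity that blows up like $(p-1)^{-1}$ as $p\to 1^{+}$: by \eqref{dixmier2} itself, $(p-1)\Tr(T^p)\to \textnormal{\bf{Tr}}_\omega(T)$, which is in general nonzero. So your estimate only shows that the tail contribution to the exponent is $O(|\lambda|^2)$ \emph{uniformly in} $p$; for a fixed $\lambda\neq 0$ this neither proves that $\lim_{p\to 1^{+}}\textnormal{Det}_{p,\omega,T}(\lambda)$ exists nor that it equals $\exp(\textnormal{\bf{Tr}}_\omega(T)\lambda)$. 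The closing step (``the $|\lambda|^2$ can be made small, so the correction vanishes as $p\to 1^{+}$'') is a non sequitur: $\lambda$ is fixed in the limit \eqref{32:ext}, and an error which is $O(|\lambda|^2)$ uniformly in $p$ does not tend to $0$ as $p\to 1^{+}$. The same defect appears in your second paragraph, where ``these traces stay bounded'' is justified by the same $\Tr(T^p)$-based bound.

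The gap is real but easily repaired: the dominating quantity must be independent of $p$. For $m\geq 2$ and $1<p<2$ one has $pm>2$, hence, writing $M:=\max(1,\|T\|_{\textnormal{op}})$ and using positivity of $T$,
\begin{equation*}
\Tr(T^{pm})=\sum_k s_k(T)^{pm}\leq M^{pm-2}\sum_k s_k(T)^{2}\leq M^{2m}\,\Tr(T^{2}),
\end{equation*}
where $\Tr(T^{2})<\infty$ is a fixed ($p$-independent) number because $T^{2}\in S_1(\mathcal{H})$ by hypothesis. Consequently, for every fixed $\lambda$ with $|\lambda|<M^{-2}$,
\begin{equation*}
(p-1)\left|\sum_{m\geq 2}\frac{(-1)^{m+1}}{m}\Tr(T^{pm})\lambda^m\right|\leq (p-1)\,\Tr(T^{2})\sum_{m\geq 2}\frac{\left(|\lambda|M^{2}\right)^m}{m}=(p-1)\,C(\lambda)\longrightarrow 0
\end{equation*}
as $p\to 1^{+}$. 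With this replacement the interchange of limit and sum is legitimate (each term converges and the tail vanishes uniformly), the exponent converges to $\textnormal{\bf{Tr}}_\omega(T)\lambda$ exactly, and your argument becomes a complete and rigorous version of the paper's proof.
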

\begin{proof} Observe that in view of Theorem \ref{Dixm:Theorem},  $\textnormal{\bf{Tr}}_{\omega}(T^{p})=0,$ for all  $1< p<\infty$,  since in this case $T^{p}\in S_{1}(\mathcal{H}).$ 
In view of Theorem \ref{thm0a}, the function $\textnormal{Det}(1+\lambda T^p)$ is analytic for $\lambda\in \mathbb{C}$ with  $|\lambda|$ small enough. So, $\textnormal{Det}_{p,\omega,T}(\lambda)$ also is analytic in a small disc centered at zero and one has
\[(\det(I+\lambda T^{p}))^{p-1}=\left(\exp\left(\sum\limits_{m=1}^{\infty}\frac{(-1)^{m+1}}{m}\Tr (T^{pm})\lambda ^m\right)\right)^{p-1},  \]
for $\lambda\in \mathbb{C}$ with  $|\lambda|$ small enough. Because of 
\begin{align*}
    (\det(I+\lambda T^{p}))^{p-1}&=\exp\left({(p-1)}\left(\sum\limits_{m=1}^{\infty}\frac{(-1)^{m+1}}{m}\Tr (T^{pm})\lambda ^m\right)\right)\\
    &=\exp\left(\sum\limits_{m=1}^{\infty}\frac{(-1)^{m+1}}{m} (p-1)\Tr (T^{pm})\lambda ^m\right),
\end{align*} and by using \eqref{dixmier2} we see  that
\begin{align*}
     \lim_{p\rightarrow 1^+}(\det(I+\lambda T^{p}))^{p-1} &=\exp\left(\sum\limits_{m=1}^{\infty}\frac{(-1)^{m+1}}{m} \lim_{p\rightarrow 1^+} (p-1)\Tr (T^{pm})\lambda ^m\right)\\
     &=\exp\left(\sum\limits_{m=1}^{\infty}\frac{(-1)^{m+1}}{m} \textnormal{\bf{Tr}}_{\omega}(T^{m})\lambda ^m\right).
\end{align*}The assumption  that $T^{p}\in S_{1}(\mathcal{H}),$  for all  $1< p<\infty$, implies that  $\textnormal{\bf{Tr}}_{\omega}(T^{m})=0,$ for all $m>1,$ and that
   $$ \textnormal{Det}_{\omega,T}(\lambda)= \exp\left(\sum\limits_{m=1}^{\infty}\frac{(-1)^{m+1}}{m} \textnormal{\bf{Tr}}_{\omega}(T^{m})\lambda ^m\right)=\exp(\lambda \textnormal{\bf{Tr}}_{\omega}(T)).  $$ Thus, we end the proof.
\end{proof}
In view of Lemma \ref{lemmadix} let us introduce the following definition.
\begin{defn} Let $\mathcal{L}^{(1,\infty)}_{adm}(\mathcal{H})$ be the family of the  positive compact operators $T$ on  $H,$ such that  $T\in \mathcal{L}^{(1,\infty)}(\mathcal{H}),$ and  $T^{p}\in S_{1}(\mathcal{H}),$ for all  $1< p<\infty$.  In view of 
\eqref{32:ext}, the function
\begin{equation}\label{32:ext:2}
   \textnormal{Det}_{\omega,T}(\lambda):=\lim_{p\rightarrow 1^{+}} \textnormal{Det}_{p,\omega,T}(\lambda)=\exp(\textnormal{\bf{Tr}}_\omega(T)\lambda),
\end{equation}admits analytic extension to the complex plane, which is given by
\begin{equation}
  \textnormal{Det}_{\omega,T}(z):=  \exp(\textnormal{\bf{Tr}}_\omega(T)z),\,z\in \mathbb{C}.
\end{equation}
Define the regularised determinant $\textnormal{\bf Det}_{\omega}$ on the family $I+\mathcal{L}^{(1,\infty)}_{adm}(\mathcal{H})=\{I+T:T\in \mathcal{L}^{(1,\infty)}_{adm}(\mathcal{H})\}$ by
\begin{equation}
    \textnormal{\bf Det}_{\omega}(I+T):= \textnormal{Det}_{\omega,T}(1).
\end{equation}
\end{defn}
\begin{rem}
We can think of the functional  $\textnormal{\bf Det}_{\omega}$ as a regularised determinant for the Dixmier trace $\textnormal{\bf Tr}_{\omega}$.
\end{rem}

\subsection{Connes equivalence theorem}\label{Connessection}  A remarkable result due to Connes computes the Dixmier class for the classical pseudo-differential operators on a closed manifold.

We record that for  a compact orientable manifold without boundary $M$ of dimension $n,$  a  pseudo-differential operator $A$ on $M$ can be defined by using the notion of a local symbol, this means that for any local chart $U$, the operator $A$ has the form $$Au(x)=\int\limits_{T^{*}_xU}e^{2\pi ix\cdot \xi}\sigma^A(x,\xi)\widehat{u}(\xi)\, d\xi.$$ The pseudo-differential operator $A$ is called classical, if $\sigma^A$ admits an asymptotic expansion 
$\sigma^A(x,\xi)\sim \sum_{j=0}^{\infty}\sigma^A_{m-j}(x,\xi)$ in such a way that each function $\sigma_{m-j}(x,\xi)$ is homogeneous in $\xi$ of order $m-j$ for $\xi\neq 0$. The set of classical pseudo-differential operators of order $m$ is denoted by $\Psi^m_{cl}(M)$, and we denote by $\Psi^{m}_{+e}(M)$ the class of positive elliptic classical pseudo-differential operators of order $m\in \mathbb{R}.$
For $A\in\Psi_{cl}^m(M)$, and for $x\in M$, $\int_{|\xi|=1}\sigma_{-n}(x,\xi)\, d\xi$ defines a local density which can be glued over $M$. In this case, the non-commutative residue of $A$ is defined by the expression
\begin{equation}
\textnormal{res}\,(A)=\frac{1}{n(2\pi)^n}\int\limits_{M}\int\limits_{| \xi|=1}\sigma^A_{-n}(x,\xi)\, d\xi\,dx.
\end{equation}
Classical pseudo-differential operators with order $-n,$ where $n=\dim(M)$ are in the Dixmier class $\mathcal{L}^{(1,\infty)}(L^2(M)),$ and moreover:

\begin{thm}[Connes, \cite{Connes94}]\label{equivalence} If $A\in\Psi^{-n}_{cl}(M)$ is a classical pseudo-differential operator, then 
\begin{equation}
  \textnormal{res}(A)=\textnormal{\bf{Tr}}_{\omega}(A),  
\end{equation} for any functional $\omega$ satisfying the conditions in Dixmier's Theorem \ref{Dixm:Theorem}.
\end{thm}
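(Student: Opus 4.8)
The plan is to reduce the identity to a statement about the first-term spectral asymptotics of a self-adjoint pseudo-differential operator, and then to read off both sides from Weyl's law. Since both $\textnormal{res}$ and $\textnormal{\bf Tr}_\omega$ are linear, and since $A\in\Psi^{-n}_{cl}(M)$ forces $A^*\in\Psi^{-n}_{cl}(M)$, I would first write $A=\textnormal{Re}(A)+i\,\textnormal{Im}(A)$ with $\textnormal{Re}(A)=(A+A^*)/2$ and $\textnormal{Im}(A)=(A-A^*)/2i$, both self-adjoint classical operators of order $-n$, exactly as in Remark \ref{Deco:remark}. It then suffices to establish $\textnormal{res}(B)=\textnormal{\bf Tr}_\omega(B)$ for a self-adjoint $B\in\Psi^{-n}_{cl}(M)$, whose real principal symbol I denote by $b_{-n}(x,\xi)$, homogeneous of degree $-n$.

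The key analytic input would be Hörmander's theorem on spectral asymptotics \cite{Hormander1985III}. For self-adjoint $B$ the nonzero eigenvalues $\lambda_j(B)$ accumulate at $0$, and the distribution functions of the positive and negative eigenvalues are governed by the principal symbol:
\[
\#\{j:\lambda_j(B)>\epsilon\}\sim\frac{1}{(2\pi)^n}\,\textnormal{vol}\,\{(x,\xi)\in T^*M: b_{-n}(x,\xi)>\epsilon\},\quad\epsilon\to 0^+,
\]
and symmetrically for $\{b_{-n}<-\epsilon\}$; the lower-order homogeneous components of the symbol enter only through the remainder and do not affect this leading term. Writing $\xi=r\omega$ with $r=|\xi|$ and $\omega\in S^{n-1}$, homogeneity gives $b_{-n}(x,r\omega)=r^{-n}b_{-n}(x,\omega)$, so the radial integral $\int_0^R r^{n-1}\,dr=R^n/n$ evaluates the phase-space volume to $\tfrac{1}{n\epsilon}\int_M\int_{|\xi|=1}(b_{-n})_+\,d\xi\,dx$. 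Together with the $(2\pi)^{-n}$ normalisation this reproduces precisely the constant $\tfrac{1}{n(2\pi)^n}$ appearing in the residue formula \eqref{res}.

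To conclude, set $c_\pm=\tfrac{1}{n(2\pi)^n}\int_M\int_{|\xi|=1}(b_{-n})_\pm\,d\xi\,dx$. The asymptotics above read $\#\{j:\lambda_j(B)>\epsilon\}\sim c_+/\epsilon$ and $\#\{j:\lambda_j(B)<-\epsilon\}\sim c_-/\epsilon$, equivalently the positive (resp.\ negative) singular values satisfy $s_j\sim c_+/j$ (resp.\ $c_-/j$). Splitting $B=B_+-B_-$ into its spectral positive and negative parts, both positive and lying in $\mathcal{L}^{(1,\infty)}(L^2(M))$, the elementary harmonic-series estimate $\sum_{j\le N}(c/j)\sim c\log N$ gives $\textnormal{\bf Tr}_{\textnormal{Dix}}(B_\pm)=c_\pm$; since these limits exist, Dixmier's Theorem \ref{Dixm:Theorem} yields $\textnormal{\bf Tr}_\omega(B_\pm)=c_\pm$ independently of $\omega$. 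Hence
\[
\textnormal{\bf Tr}_\omega(B)=c_+-c_-=\frac{1}{n(2\pi)^n}\int_M\int_{|\xi|=1}b_{-n}\,d\xi\,dx=\textnormal{res}(B),
\]
and linearity returns the statement for $A$, with \cite{Connes94} as the underlying reference.

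The hard part will be the sharp first-term Weyl asymptotics with the correct universal constant, and the accompanying verification that only the top-order component $\sigma_{-n}$ contributes to the leading term. This is exactly where the analysis of the spectral function in \cite{Hormander1985III} is needed, and the delicate points are threefold: the operator has negative order, so one works with the distribution of eigenvalues near $0$ rather than a counting function at $+\infty$; the principal symbol may change sign, which is why the positive and negative spectra must be treated separately; and the remainder in the Weyl law must be shown to be genuinely $o(1/\epsilon)$, so that it is invisible to the logarithmic Dixmier average. Once these asymptotics are secured, the passage to the Dixmier trace is the routine Tauberian (harmonic-series) step, and the independence of $\omega$ is automatic from the existence of the limit.
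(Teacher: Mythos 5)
The paper does not actually prove this statement: it records it as Connes' theorem, citing \cite{Connes94}, and uses it as a black box (e.g.\ to pass from Corollary \ref{WRGroup:3:3} to Corollary \ref{WRGroup:2}). So there is no internal proof to compare against; what can be assessed is whether your reconstruction is a sound proof of the cited result. In structure it is, and it follows one of the two classical routes: the reduction to self-adjoint $B$ is legitimate (the principal symbol of $A^*$ is $\overline{a_{-n}}$, so $\textnormal{res}$ splits into real and imaginary parts exactly as $\textnormal{\bf Tr}_{\omega}$ does by linearity), the radial-integration bookkeeping correctly reproduces the constant $\tfrac{1}{n(2\pi)^n}$ of \eqref{res}, the passage from counting-function asymptotics to $\textnormal{\bf Tr}_{\textnormal{Dix}}(B_{\pm})=c_{\pm}$ is the standard inversion-plus-harmonic-series step, and invoking Theorem \ref{Dixm:Theorem} for $\omega$-independence is exactly right. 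The genuine weak point is the provenance of your key analytic input. The asymptotics $\#\{j:\lambda_j(B)>\epsilon\}\sim\frac{1}{(2\pi)^n}\,\textnormal{vol}\{b_{-n}>\epsilon\}$ for a \emph{non-elliptic, sign-indefinite} self-adjoint operator of order $-n$ is not in H\"ormander \cite{Hormander1985III}: the spectral asymptotics developed there concern positive elliptic operators. What you need is the Birman--Solomyak theorem on eigenvalue asymptotics of compact pseudo-differential operators of negative order with sign-changing principal symbol, a substantial variational result in its own right. As written, your proof delegates its entire difficulty to a statement that your cited source does not contain; with the correct reference, the argument closes.

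You should also know the cheaper reduction, essentially Connes' own, which avoids the sign-indefinite Weyl law altogether. Both $\textnormal{res}$ and $\textnormal{\bf Tr}_{\omega}$ are linear and vanish on classical operators of order $\leq -n-1$ (such operators have vanishing $(-n)$-homogeneous component, and they are trace class, so Theorem \ref{Dixm:Theorem} kills their Dixmier trace). Given self-adjoint $B$ of order $-n$, pick a positive elliptic classical $P$ of order $-n$ and $c>\sup_{|\xi|=1}|b_{-n}|$; then $cP+B$ has positive elliptic principal symbol, so one can write $cP+B=R^*R+S$ with $R$ classical of order $-n/2$ having principal symbol $(c|\xi|^{-n}+b_{-n})^{1/2}$ and $S$ of order $\leq -n-1$. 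Since $R^*R$ and $P$ are \emph{positive elliptic}, only the classical elliptic Weyl law plus the Tauberian step is needed, and linearity finishes the proof. This is the route that matches the level of spectral asymptotics actually available in the standard references, and it is the natural companion to the way the present paper reduces its own computations (via Lemma \ref{beautifulproof}) to principal terms.
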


\section{Dixmier traces and Wodzicki residues on compact Lie groups}\label{SectioDixmier}

In this section we investigate the Dixmier traceability of pseudo-differential operators on compact Lie groups. We will apply our analysis to the case of subelliptic pseudo-differential classes $\Psi^{-Q,\mathcal{L}}_{\rho,0}(G\times \widehat{G}).$ In particular, if we replace the sub-Laplacian $\mathcal{L}$ by the Laplace-Beltrami operator our results can be applied to the standard H\"ormander classes $\Psi^{-n}_{\rho,0}(G;\textnormal{loc})=\Psi^{-n}_{\rho,0}(G\times \widehat{G}).$\\

For our further analysis we require the following lemma (see Lemma 8.10 in \cite{CR20}).

\begin{lem}\label{beautifulproof}
For  $0\leqslant\delta< \rho\leqslant 1,$   let us consider a continuous linear operator $P:C^\infty(G)\rightarrow\mathscr{D}'(G)$ with global symbol  $\sigma_P\in {S}^{m,\mathcal{L}}_{\rho,\delta}( G\times \widehat{G})$, of order $m<-Q.$ Then $\textnormal{\bf{Tr}}_{w}(P)=0.$
\end{lem}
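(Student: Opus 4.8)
The plan is to reduce the statement to the single fact, recorded in the last assertion of Dixmier's Theorem \ref{Dixm:Theorem}, that $\textnormal{\bf{Tr}}_{\omega}(A)=0$ whenever $A\in S_1(\mathcal{H})$. Taking $\mathcal{H}=L^2(G)$, it therefore suffices to show that every operator $P=\Op(\sigma_P)$ with $\sigma_P\in {S}^{m,\mathcal{L}}_{\rho,\delta}(G\times\widehat{G})$ and $m<-Q$ is of trace class on $L^2(G)$; the vanishing of its Dixmier trace is then immediate.

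To prove $P\in S_1(L^2(G))$ I would factor $P$ through the Hilbert--Schmidt ideal, using $S_2\cdot S_2\subseteq S_1$. Writing $\mathcal{M}=(1+\mathcal{L})^{\frac12}$ and choosing a real number $\sigma$ with
\[
\tfrac{Q}{2}<\sigma<-m-\tfrac{Q}{2},
\]
which is possible \emph{precisely} because $m<-Q$, I decompose $P=(P\mathcal{M}^{\sigma})\,\mathcal{M}^{-\sigma}$ and show that both factors are Hilbert--Schmidt. For the invariant factor $\mathcal{M}^{-\sigma}$, whose matrix symbol is the diagonal $\widehat{\mathcal{M}}(\xi)^{-\sigma}$, the Plancherel identity gives
\[
\n{\mathcal{M}^{-\sigma}}_{S_2}^2=\sumxi d_\xi\,\Tr\!\big(\widehat{\mathcal{M}}(\xi)^{-2\sigma}\big)=\Tr\!\big((1+\mathcal{L})^{-\sigma}\big),
\]
which is finite since $\sigma>\tfrac{Q}{2}$.

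For the factor $T:=P\mathcal{M}^{\sigma}$, the composition calculus of Theorem \ref{calculus}(ii) yields $\sigma_T\in {S}^{m+\sigma,\mathcal{L}}_{\rho,\delta}(G\times\widehat{G})$. Applying the order-zero left symbol inequality \eqref{InIC} with $\alpha=\beta=0$, namely $\Vert\widehat{\mathcal{M}}(\xi)^{-(m+\sigma)}\sigma_T(x,\xi)\Vert_{\textnormal{op}}\leq C$, together with the elementary bound $\Tr(R^*DR)\leq\Vert R\Vert_{\textnormal{op}}^2\,\Tr(D)$ for positive diagonal $D$, I would estimate
\[
\n{T}_{S_2}^2=\sumxi d_\xi\int_G\Vert\sigma_T(x,\xi)\Vert_{\HS}^2\,dx\leq C'\,\Tr\!\big((1+\mathcal{L})^{m+\sigma}\big),
\]
which converges because $m+\sigma<-\tfrac{Q}{2}$. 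Hence both factors lie in $S_2(L^2(G))$, so $P\in S_1(L^2(G))$ and $\textnormal{\bf{Tr}}_{\omega}(P)=0$.

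The main obstacle is the spectral input that pins down the role of the Hausdorff dimension: the trace $\Tr((1+\mathcal{L})^{-s})$ is finite exactly when $s>\tfrac{Q}{2}$, which encodes the Weyl law for the sub-Laplacian $\mathcal{L}$ and is what forces the threshold $m<-Q$. I would draw this, as well as the two auxiliary facts that $\mathcal{M}^{\sigma}\in {S}^{\sigma,\mathcal{L}}_{1,0}(G\times\widehat{G})$ and that the subelliptic calculus is closed under composition, from \cite{CR20}. The only other point requiring care is the Hilbert--Schmidt norm formula $\n{T}_{S_2}^2=\sumxi d_\xi\int_G\Vert\sigma_T(x,\xi)\Vert_{\HS}^2\,dx$, which is the matrix-symbol Plancherel identity of \cite{Ruz} and holds independently of the symbol class.
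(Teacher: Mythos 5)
Your proposal is correct, and it follows essentially the same route as the result the paper relies on: the paper does not prove Lemma \ref{beautifulproof} itself but quotes it from Lemma 8.10 of \cite{CR20}, whose content is precisely that subelliptic order $m<-Q$ forces $P\in S_1(L^2(G))$ (cf.\ the Schatten-class statements such as Corollary 8.14 of \cite{CR20} invoked elsewhere in the paper), after which the vanishing of $\textnormal{\bf{Tr}}_{\omega}$ on $S_1(\mathcal{H})$ from Theorem \ref{Dixm:Theorem} finishes the argument. Your Hilbert--Schmidt factorization $P=(P\mathcal{M}^{\sigma})\,\mathcal{M}^{-\sigma}$ with $\frac{Q}{2}<\sigma<-m-\frac{Q}{2}$ is a correct, self-contained way to supply the trace-class step, resting on the same spectral input $\Tr\left((1+\mathcal{L})^{-s}\right)<\infty$ for $s>\frac{Q}{2}$ that encodes the role of the Hausdorff dimension $Q$.
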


\begin{rem}[Dixmier traces of Fourier multipliers]\label{remark:FM} Let $G$ be a compact Lie group and let $A:C^{\infty}(G)\rightarrow C^{\infty}(G)$ be a Fourier multiplier. In terms of the quantisation process in  \cite{Ruz}, there is a  matrix-symbol $\sigma_A:\widehat{G}\rightarrow \cup_{[\xi]\in \widehat{G}}\mathbb{C}^{d_\xi\times d_\xi}$ such that
\begin{equation}\label{vector-valuedq}
    Af(x)=\sum_{[\xi]\in \widehat{G}}d_{\xi}\textnormal{\bf{Tr}}[\xi(x)\sigma_A(\xi)\widehat{f}(\xi)   ],\,\,\,f\in C^{\infty}(G).
\end{equation} It was proved in \cite{cdc20}
 that $A\in \mathcal{L}^{(1,\infty)}_\omega(L^2(G)),$ (see Definition \ref{def:admisible}) if and only if,
 \begin{equation}
    \Vert \sigma_A(\xi)\Vert_{\mathcal{L}^{(1,\infty)}(\widehat{G})}:=\lim_{N\rightarrow\infty}\frac{1}{\log N}\sum_{[\xi]:\langle \xi\rangle\leq N}d_{\xi}\textnormal{\bf{Tr}}(|\sigma_A(\xi)|)<\infty,
\end{equation} and that  $\textnormal{\bf{Tr}}_\omega(A)= \Vert \sigma_A(\xi)\Vert_{\mathcal{L}^{(1,\infty)}(\widehat{G})}.$  In view of Theorem \ref{dixmierdet}, for  $A\in \Psi^{-n}_{+e}(G),$ $n=\dim(G),$ we have
\begin{equation}\label{dixmierdet2}
     \textnormal{Det}_{\omega,A}(\lambda)=\exp\left(\lambda\Vert \sigma_A(\xi)\Vert_{\mathcal{L}^{(1,\infty)}(\widehat{G})}\right),
\end{equation} for $\lambda\in \mathbb{C}$ with  $|\lambda|$ small enough.
In particular, $$\textnormal{Det}_{\omega,A}'(0):=\frac{d}{d\lambda}\textnormal{Det}_{\omega,A}(\lambda)|_{\lambda=0}=\Vert \sigma_A(\xi)\Vert_{\mathcal{L}^{(1,\infty)}(\widehat{G})}.$$
\end{rem}

Now, we will compute the Dixmier trace of positive operators. 
\begin{lem}\label{thmmLieGroup} Let   $A\in \Psi^{-Q,\mathcal{L}}_{\rho,0}(G\times  \widehat{G})$ be a positive  $\mathcal{L}$-elliptic pseudo-differential operator  and let $0<\rho\leq 1$. Assume that the symbol of $A$ admits an asymptotic expansion
\begin{equation*}
    \sigma_A(x,[\xi])\sim \sum_{k=-\infty}^{-Q}\sigma_{k}(x,[\xi]),\,
\end{equation*} in components with decreasing order, which means that, for any $N\in \mathbb{N},$
\begin{equation}\label{asymp:exp}
     \sigma_A(x,[\xi])- \sum_{k=-N-Q}^{-Q}\sigma_{k}(x,[\xi])\in S^{-(N+1)\rho-Q}_{\rho,0}(G\times \widehat{G}),
\end{equation}with the principal symbol $\sigma_{-Q}(x,[\xi])\geq 0,$ being positive.
Then $A\in \mathcal{L}^{(1,\infty)}(L^2(G)),$
\begin{equation}\label{Dix:form:inte}
    \textnormal{\bf{Tr}}_{\omega}(A)=\int\limits_{G}\Vert \sigma_{-Q}(x,[\xi]) \Vert_{\mathcal{L}^{(1,\infty)}(\widehat{G})}dx,
\end{equation}
and we have
\begin{equation}\label{dixmierdet22}
     \textnormal{Det}_{\omega,A}(\lambda)= \exp\left(\textnormal{\bf{Tr}}_{\omega}(A)\lambda\right)=\exp\left(\int\limits_{G}\Vert \sigma_{-Q}(x,[\xi]) \Vert_{\mathcal{L}^{(1,\infty)}(\widehat{G})}dx\cdot \lambda\right),
\end{equation} for $\lambda\in \mathbb{C}$ with  $|\lambda|$ small enough,
and $$\textnormal{Det}_{\omega,A}'(0):=\frac{d}{d\lambda}\textnormal{Det}_{\omega,A}(\lambda)|_{\lambda=0}=\textnormal{\bf{Tr}}_\omega(A).$$
\end{lem}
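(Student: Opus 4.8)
The plan is to reduce the computation to the principal symbol and then to the already-understood case of Fourier multipliers, exploiting the positivity of $A$ through the Connes--Moscovici formula \eqref{dixmier2}. First I would record the two spectral facts that let us enter the framework. Since $A\ge0$ is $\mathcal{L}$-elliptic of order $-Q$ (Definition \ref{IesTParametrix}) and $Q$ is the Hausdorff dimension attached to $\mathcal{L}$, the subelliptic Weyl law for $\mathcal{L}$ gives $s_j(A)\asymp\langle\xi_j\rangle^{-Q}\asymp j^{-1}$, so that $\sum_{j\le N}s_j(A)=O(\log N)$ and hence $A\in\mathcal{L}^{(1,\infty)}(L^2(G))$. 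For $p>1$, the positive operator $A^p$ belongs to $\Psi^{-pQ,\mathcal{L}}_{\rho,0}(G\times\widehat{G})$ with principal symbol $\sigma_{-Q}(x,[\xi])^p$ --- here I use that complex powers of positive $\mathcal{L}$-elliptic operators stay in the subelliptic calculus, as furnished by the parametrix of Theorem \ref{calculus} and the functional calculus --- and, having order $<-Q$, it satisfies $A^p\in S_1(L^2(G))$. Thus $A$ meets the hypotheses of Definition \ref{def:admisible} and of Lemma \ref{lemmadix}.

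Next, since $A\ge0$, I would invoke \eqref{dixmier2} (Remark \ref{MainRemark}) to write $\textnormal{\bf{Tr}}_\omega(A)=\lim_{p\to1^+}(p-1)\,\textnormal{\bf{Tr}}(A^p)$, the very existence of the limit being equivalent to $A\in\mathcal{L}^{(1,\infty)}_\omega(L^2(G))$. Writing $A^p=\textnormal{Op}(\sigma_{-Q}^{\,p})+R_p$ with $R_p$ of order $\le-pQ-\rho$, and using the symbol trace formula $\textnormal{\bf{Tr}}(T)=\int_G\sum_{[\xi]\in\widehat{G}}d_\xi\,\textnormal{Tr}(\sigma_T(x,[\xi]))\,dx$, I obtain
\[
(p-1)\,\textnormal{\bf{Tr}}(A^p)=(p-1)\int\limits_G\sum_{[\xi]\in\widehat{G}}d_\xi\,\textnormal{Tr}\!\big(\sigma_{-Q}(x,[\xi])^p\big)\,dx+(p-1)\,\textnormal{\bf{Tr}}(R_p).
\]
By the Weyl law the trace of the order $\le-pQ-\rho$ operator $R_p$ stays bounded as $p\to1^+$ (of magnitude $((p-1)Q+\rho)^{-1}$), so the last term tends to $0$; this is the quantitative counterpart of Lemma \ref{beautifulproof}, and it isolates the contribution of the principal symbol.

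I would then treat the surviving term by Fubini and the Fourier-multiplier result. For each fixed $x$, the family $[\xi]\mapsto\sigma_{-Q}(x,[\xi])\ge0$ is the symbol of a positive Fourier multiplier of order $-Q$, so by Remark \ref{remark:FM} together with \eqref{dixmier2} on $\widehat{G}$,
\[
\lim_{p\to1^+}(p-1)\sum_{[\xi]\in\widehat{G}}d_\xi\,\textnormal{Tr}\!\big(\sigma_{-Q}(x,[\xi])^p\big)=\Vert\sigma_{-Q}(x,[\xi])\Vert_{\mathcal{L}^{(1,\infty)}(\widehat{G})}.
\]
The remaining step is to interchange $\lim_{p\to1^+}$ with $\int_G$, which I would justify by dominated convergence: the uniform-in-$x$ estimate $\Vert\sigma_{-Q}(x,[\xi])\Vert_{\textnormal{op}}\le C\langle\xi\rangle^{-Q}$ and the Weyl law give $(p-1)\sum_{[\xi]}d_\xi\textnormal{Tr}(\sigma_{-Q}(x,[\xi])^p)\le C'$ uniformly in $x\in G$ and $p$ near $1$, while $G$ has finite measure. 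This produces \eqref{Dix:form:inte}. Finally, \eqref{dixmierdet22} and $\textnormal{Det}_{\omega,A}'(0)=\textnormal{\bf{Tr}}_\omega(A)$ follow immediately from Lemma \ref{lemmadix} applied to $T=A$, upon substituting \eqref{Dix:form:inte}.

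I expect the main obstacle to be the two uniformity issues underlying the argument above: that the complex powers $A^p$ genuinely lie in $\Psi^{-pQ,\mathcal{L}}_{\rho,0}$ with principal symbol $\sigma_{-Q}^{\,p}$ and with remainder $R_p$ whose trace is controlled uniformly as $p\to1^+$, and the uniform-in-$x$ bound on the partial sums needed to move the limit inside the integral. Once these are secured, the remaining manipulations are formal.
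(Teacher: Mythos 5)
Your proposal follows essentially the same route as the paper's proof: positivity plus the Connes--Moscovici formula \eqref{dixmier2}, the subelliptic functional calculus to write $A^p=\textnormal{Op}(\sigma_{-Q}^{\,p})+R_p$, the trace formula of \cite{dr13a:nuclp}, reduction at fixed $x$ to the Fourier multiplier result of Remark \ref{remark:FM}, dominated convergence to pull the limit inside $\int_G$, and finally Lemma \ref{lemmadix} for the determinant statements. Two of your justifications are sound as stated: the $\mathcal{L}^{(1,\infty)}$ membership via factoring $A$ against $\mathcal{M}^{-Q}$ and the Weyl law, and the domination $(p-1)\sum_{[\xi]}d_\xi\textnormal{Tr}\big(\sigma_{-Q}(x,[\xi])^p\big)\le C'$ uniformly in $x$ and $p$ near $1$; indeed this last formulation is the correct shape of the domination (the paper's own version, asserting $|k_{x,p}(e_G)|\le C$ uniformly in $p\in(1,2)$, is stated too strongly, since $k_{x,p}(e_G)$ diverges like $(p-1)^{-1}$ as $p\to1^{+}$, and what is really needed is exactly the bound you wrote).

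The genuine gap is the remainder step. You claim $(p-1)\textnormal{\bf{Tr}}(R_p)\to0$ because ``by the Weyl law'' $\textnormal{\bf{Tr}}(R_p)=O\big(((p-1)Q+\rho)^{-1}\big)$. The Weyl law only controls $\textnormal{\bf{Tr}}({\mathcal{M}}^{-pQ-\rho})$; to convert that into a bound on $\textnormal{\bf{Tr}}(R_p)$ you must know that $\Vert R_p{\mathcal{M}}_{(Q+\varepsilon)p}\Vert_{\mathscr{B}(L^2(G))}$ stays bounded as $p\to1^{+}$, i.e.\ that the symbol seminorms of $R_p$ in its class are uniform in $p$. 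This is not automatic: $R_p$ is produced by the functional calculus, and its seminorms could a priori blow up as $p\to1^{+}$ precisely at the rate that kills the factor $(p-1)$. This uniformity is where the paper does most of its work: it fixes $0<\varepsilon<\rho/2$, proves $\sup_{1<p<2}\Vert R_p{\mathcal{M}}_{(Q+\varepsilon)p}\Vert_{\mathscr{B}(L^2(G))}<\infty$ by a Sobolev embedding argument in the $x$-variable combined with the Calder\'on--Vaillancourt theorem (Theorem \ref{calculus} (iii)), and then concludes
\begin{equation*}
\lim_{p\rightarrow1^{+}}(p-1)|\textnormal{\bf{Tr}}(R_p)|\lesssim\lim_{p\rightarrow1^{+}}(p-1)\textnormal{\bf{Tr}}({\mathcal{M}}^{-(Q+\varepsilon)p})=\textnormal{\bf{Tr}}_\omega({\mathcal{M}}^{-(Q+\varepsilon)})=0
\end{equation*}
by Lemma \ref{beautifulproof}. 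You do flag this uniformity (together with the uniform validity of the expansion of $A^p$) as the ``main obstacle,'' but it is not a routine verification to be secured afterwards: it is the technical core of the proof, and without it your remainder estimate remains an assertion rather than an argument.
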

\begin{proof} For any $x\in G,$ observe that 
 \begin{equation*}
    \Vert \sigma_{-Q}(x,[\xi])\Vert_{\mathcal{L}^{(1,\infty)}(\widehat{G})}:=\lim_{N\rightarrow\infty}\frac{1}{\log N}\sum_{[\xi]:\langle \xi\rangle\leq N}d_{\xi}\textnormal{\bf{Tr}}(|\sigma_{-Q}(x,[\xi])|).
\end{equation*}The estimate $$\textnormal{\bf{Tr}}(|\sigma_{-Q}(x,[\xi])|\widehat{\mathcal{M}}(\xi)^{Q}\widehat{\mathcal{M}}(\xi)^{-Q})\leq\Vert |\sigma_{-Q}(x,[\xi])|\widehat{\mathcal{M}}(\xi)^{Q}\Vert_{\textnormal{op}} \textnormal{\bf{Tr}}(|\sigma_{-Q}(x,[\xi])|)$$ implies that
\begin{align*}
    \Vert \sigma_{-Q}(x,[\xi]) \Vert_{\mathcal{L}^{(1,\infty)}(\widehat{G})} &= \Vert |\sigma_{-Q}(x,[\xi])|\widehat{\mathcal{M}}(\xi)^{Q}\widehat{\mathcal{M}}(\xi)^{-Q} \Vert_{\mathcal{L}^{(1,\infty)}(\widehat{G})}\\
    &\leq\left(\sup_{[\xi]\in \widehat{G}}\Vert\,| \sigma_{-Q}(x,[\xi])|\,\widehat{\mathcal{M}}(\xi)^{Q}\Vert_{\textnormal{op}} \right)\Vert \widehat{\mathcal{M}}(\xi)^{-Q} \Vert_{\mathcal{L}^{(1,\infty)}(\widehat{G})}\\&\lesssim 1.
\end{align*}
Indeed, the functional calculus gives us that $|\sigma_{-Q}(x,[\xi])|\in S^{-Q}_{\rho,0}(G\times \widehat{G}), $ and then,  that $$\sup_{[\xi]\in  \widehat{G}}\Vert\,| \sigma_{-Q}(x,[\xi])|\,\widehat{\mathcal{M}}(\xi)^{Q}\Vert_{\textnormal{op}}\leq C <\infty,$$ uniformly in $x\in G.$ While, the estimate $\Vert \widehat{\mathcal{M}}(\cdot)^{-Q} \Vert_{\mathcal{L}^{(1,\infty)}(\widehat{G})}<\infty$ was proved in Lemma 8.9 of \cite{CR20}. Consequently, the compactness of the group $G,$ implies that  $ \Vert \sigma_{-Q}(x,[\xi]) \Vert_{\mathcal{L}^{(1,\infty)}(\widehat{G})}\in L^1(G).$ So, by the positivity of $A,$ if we prove \eqref{Dix:form:inte},  we deduce that $A\in \mathcal{L}^{(1,\infty)}_\omega(L^2(G))$ in view of Remark \ref{MainRemark}.\\

In view of the asymptotic expansion \eqref{asymp:exp}, we can decompose the operator $A$ into the sum $A=A_0+P,$ where $A_0:=\textnormal{Op}(\sigma_{-Q}),$ and $P$ a pseudo-differential operator in the class $\Psi^{-Q-\rho,\mathcal{L}}_{\rho,\delta}(G\times \widehat{G}).$ Before continuing with the proof, let us remark that in view of the linearity of the functional $\textnormal{\bf{Tr}}_\omega,$ we have
\begin{equation}\label{eq:dix}
    \textnormal{\bf{Tr}}_\omega(A)=\textnormal{\bf{Tr}}_\omega(A_0)+\textnormal{\bf{Tr}}_\omega(P)=\textnormal{\bf{Tr}}_\omega(A_0).
\end{equation}Note that we have used that $\textnormal{\bf{Tr}}_\omega(P)=0$ in view of  Lemma \ref{beautifulproof}. So, \eqref{eq:dix} is the reason of why the symbol $\sigma_{-Q}$ appears in \eqref{dixmierdet22}. Let is compute the Dixmier trace of $A.$

For every $z\in G,$ let us consider the Fourier multiplier $A_z$ associated to the symbol $\sigma(z,\cdot):=\sigma_{-Q}(z,\cdot).$ Observe that any Fourier multiplier $A_z$ is positive because  $\sigma (z,[\xi])\geqslant     0,$ for every $[\xi].$ Also, the  $\mathcal{L}$-ellipticity of $A$ implies the $\mathcal{L}$-ellipticity of $A_{z}$ for every $z\in G.$ Because $A_{z}$ is left-invariant and of subelliptic order $-Q,$ for all $z\in G,$ from Lemma 8.9 of \cite{CR20} we have that $A_{z}\in \mathcal{L}^{(1,\infty)}(L^2(G)),$ and from Theorem 1.1 of \cite{cdc20} (see also Remark  \ref{remark:FM}), we have that the Dixmier trace of any $A_{z}\geq 0,$ is given by
\begin{equation}\label{Dix:Multi:th}
    \textnormal{\bf{Tr}}_\omega(A_z)=\lim_{p\rightarrow 1^{+}}(p-1) \textnormal{\bf{Tr}}(A_z^p)=\Vert \sigma_{-Q}(z,\cdot) \Vert_{\mathcal{L}^{(1,\infty)}(\widehat{G})}.
\end{equation}

In view of the asymptotic expansion \eqref{asymp:exp},  the functional calculus in \cite{CR20} implies that
    \begin{equation}
        A^p=\textnormal{Op}[(x,[\xi])\mapsto\sigma(x,[\xi])^{p}]+R_{p},\quad A_{z}^p=\textnormal{Op}[[\xi]\mapsto\sigma_{-Q}(z,\xi)^{p}],
    \end{equation}where, for any $1<p<\infty,$ $R_{p}$ is a subelliptic pseudo-differential operator of order $-Qp-\rho.$ Moreover, since the order of $A^p$ is $-Qp,$ for all $1<p<\infty,$ in view of Corollary 8.14 of \cite{CR20}, $A^p$ is trace class and $\textnormal{\bf{Tr}}(A^p)=\int\limits_{G}\sum_{[\xi]\in \widehat{G}}d_\xi \textnormal{Tr}[\sigma_{A^p}(x,[\xi])]dx,$ in view of the trace formula in \cite{dr13a:nuclp}.  
    Because $A\geq 0$ we have that
    \begin{align*}
     \textnormal{\bf{Tr}}_\omega(A)&=\lim_{p\rightarrow 1^{+}}(p-1) \textnormal{\bf{Tr}}(A^p)=\lim_{p\rightarrow 1^{+}}(p-1)\int\limits_{G}\sum_{[\xi]\in \widehat{G}}d_\xi \textnormal{Tr}[\sigma_{A^p}(x,[\xi])]dx \\
     &=\lim_{p\rightarrow 1^{+}}(p-1)\int\limits_{G}\sum_{[\xi]\in \widehat{G}}d_\xi \textnormal{Tr}[\sigma_{-Q}(x,[\xi])^p+\sigma_{R_p}(x,[\xi])]dx\\
      &=\lim_{p\rightarrow 1^{+}}(p-1)\left(\int\limits_{G}\sum_{[\xi]\in \widehat{G}}d_\xi \textnormal{Tr}[\sigma_{-Q}(x,[\xi])^p]dx+\int\limits_{G}\sum_{[\xi]\in \widehat{G}}d_\xi \textnormal{Tr}[\sigma_{R_p}(x,[\xi])]dx\right)\\
     &=\lim_{p\rightarrow 1^{+}}(p-1) \left( \textnormal{\bf{Tr}}[\textnormal{Op}(\sigma_{-Q}^{p})] +\textnormal{\bf{Tr}}[R_{p}]\right).
    \end{align*}
  Now, we are going to guarantee the existence of this limit. Firstly, using again Corollary 8.14 of \cite{CR20},  and that the order of $\textnormal{Op}(\sigma_{-Q}^{p})$ is $-Qp,$ for all $1<p<\infty,$ we have that $\textnormal{\bf{Tr}}(\textnormal{Op}(\sigma_{-Q}^{p}))=\int\limits_{G}\sum_{[\xi]\in \widehat{G}}d_\xi \textnormal{Tr}[\sigma_{-Q}^{p}(x,[\xi])]dx,$ in view of the trace formula in \cite{dr13a:nuclp}. Moreover, using  \eqref{Dix:Multi:th} we have that
    \begin{align*}
       \lim_{p\rightarrow 1^{+}}(p-1)  \textnormal{\bf{Tr}}[\textnormal{Op}(\sigma_{-Q}^{p})]
       &=\lim_{p\rightarrow 1^{+}}(p-1)\int\limits_{G}\sum_{[\xi]\in \widehat{G}}d_\xi \textnormal{Tr}[\sigma_{-Q}(x,[\xi])^p]dx\\
        &= \int\limits_{G}\lim_{p\rightarrow 1^{+}}(p-1)\sum_{[\xi]\in \widehat{G}}d_\xi \textnormal{Tr}[\sigma_{-Q}(x,[\xi])^p]dx\\
        &=\int\limits_{G}\lim_{p\rightarrow 1^{+}}(p-1) \textnormal{\bf{Tr}}(A_x^p)dx=\int\limits_{G} \textnormal{\bf{Tr}}_\omega(A_x)dx\\
       &= \int\limits_{G}\Vert \sigma_{-Q}(x,\cdot) \Vert_{\mathcal{L}^{(1,\infty)}(\widehat{G})}dx.
    \end{align*}
We observe that we can interchange the limit with the integral  in the previous argument  in view of the dominated convergence theorem. Indeed, the operator  $\textnormal{Op}(\sigma_{-Q}(x,[\xi])^p)$ of subelliptic order  $m=-Qp,$  has a right-convolution kernel $k_{x,p}$ that  behaves like  (view  \cite[Section 4]{CR20})  $$  
        |k_{x,p}(y)|\lesssim_{\sigma_{A_0}} (1+|y|)^{-\frac{Q+m}{\rho}}=(1+|y|)^{\frac{(p-1)Q}{\rho}}\lesssim 1,$$ for all $p\in (1,2).$ So,  $k_{x,p}$ is locally integrable at the identity $e_G$ for all $p\in (1,2),$ in view of the compactness of $G$ and the continuity of the geodesic norm $y\mapsto |y|.$ Note that the  Fourier inversion formula gives
    \begin{equation}
        k_{x,p}(e_G)=\sum_{[\xi]\in \widehat{G}}d_\xi \textnormal{Tr}[\sigma_{-Q}(x,[\xi])^p].
    \end{equation} So, we have that $|k_{x,p}(e_G)|\leq C,$ with $C>0,$ independent of $p\in(1,2),$  and that $\int_G|k_{x,p}(e_G)|dx\leq C$ for all $1<p\leq 2,$ in view of the compactness of $G.$  The dominated convergence theorem gives 
  \begin{equation}
      \lim_{p\rightarrow 1^{+}}(p-1)\int\limits_{G}k_{x,p}(e_G)dx=\int\limits_{G}\lim_{p\rightarrow 1^{+}}(p-1) k_{x,p}(e_G)dx
  \end{equation}ensuring that
  $$\lim_{p\rightarrow 1^{+}}(p-1)\int\limits_{G}\sum_{[\xi]\in \widehat{G}}d_\xi \textnormal{Tr}[\sigma_{-Q}(x,[\xi])^p]dx
        = \int\limits_{G}\lim_{p\rightarrow 1^{+}}(p-1)\sum_{[\xi]\in \widehat{G}}d_\xi \textnormal{Tr}[\sigma_{-Q}(x,[\xi])^p]dx$$ and the identity
  \begin{equation}
      \lim_{p\rightarrow 1^{+}}(p-1)  \textnormal{\bf{Tr}}[\textnormal{Op}(\sigma_{-Q}^{p})]=\int\limits_{G}\Vert \sigma_{-Q}(x,\cdot) \Vert_{\mathcal{L}^{(1,\infty)}(\widehat{G})}dx.
  \end{equation}  
 We claim that 
$$  \lim_{p\rightarrow 1^{+}}(p-1)\textnormal{\bf{Tr}}(R_p)=0. $$
For the proof, we fix $p\in (1,2),$  let $0<\varepsilon <\frac{\rho}{2}$
 and observe that
\begin{align*}
    |\textnormal{\bf{Tr}}(R_p)|=|\textnormal{\bf{Tr}}(R_p{\mathcal{M}}_{(Q+\varepsilon)p}{\mathcal{M}}_{-(Q+\varepsilon)p})|\leq C\Vert R_p{\mathcal{M}}_{(Q+\varepsilon)p} \Vert_{\mathscr{B}(L^2(G))}|\textnormal{\bf{Tr}}({\mathcal{M}}_{-(Q+\varepsilon)p})|.
\end{align*}Since  
\begin{equation}\label{about:order}
    R_p{\mathcal{M}}_{(Q+\varepsilon)p}\in \Psi^{-\rho+\varepsilon p,\mathcal{L}}_{\rho,0}(G\times \widehat{G}),
\end{equation} 
 the inequalities
$\varepsilon<p\varepsilon<2\varepsilon<\rho,$ and the Calder\'on-vaillancourt theorem (see (iii) in Theorem \ref{calculus}) imply that  $$\forall 1<p<2,\,\,\Vert R_p{\mathcal{M}}_{(Q+\varepsilon)p} \Vert_{\mathscr{B}(L^2(G))}<\infty.$$ 
For any $x\in G,$ the Sobolev embedding theorem implies that 
\begin{align*}
| \textnormal{Op}[(x,[\xi])\mapsto & \sigma_{R_p}(x,[\xi])\widehat{\mathcal{M}}_{(Q+\varepsilon)p}(\xi)] f(x)|\\
\leq     \sup_{ z\in G} |\textnormal{Op}[[\xi]\mapsto &\sigma_{R_p}(z,[\xi])\widehat{\mathcal{M}}_{(Q+\varepsilon)p}(\xi)] f(x)|\\
    &\leq \sum_{|\alpha|\leq {[n/2]+1}}\Vert \textnormal{Op}[[\xi]\mapsto X_{z}^{\alpha}\sigma_{R_p}(z,[\xi])\widehat{\mathcal{M}}_{(Q+\varepsilon)p}(\xi)]f(x)\Vert_{L^2( G_z)}.
\end{align*}
The notation $L^2( G_z)$ indicates that the $L^2$-norm is taken in the $z$-variables. Consequently,
\begin{align*}
    &\Vert R_p{\mathcal{M}}_{(Q+\varepsilon)p} \Vert_{\mathscr{B}(L^2)}\\
    &=\sup_{\Vert f\Vert_{L^2}=1}\Vert \textnormal{Op}[(x,[\xi])\mapsto \sigma_{R_p}(x,[\xi])\widehat{\mathcal{M}}_{(Q+\varepsilon)p}(\xi)] f(x)\Vert_{L^2}\\
    &\leq \sup_{\Vert f\Vert_{L^2}=1} \Vert \sup_{ z\in G} |\textnormal{Op}[[\xi]\mapsto \sigma_{R_p}(z,[\xi])\widehat{\mathcal{M}}_{(Q+\varepsilon)p}(\xi)] f(x)|\Vert_{L^2}\\
    &\leq \sup_{\Vert f\Vert_{L^2}=1}\sum_{|\alpha|\leq {[n/2]+1}}\Vert \textnormal{Op}[[\xi]\mapsto X_{z}^{\alpha}\sigma_{R_p}(z,[\xi])\widehat{\mathcal{M}}_{(Q+\varepsilon)p}(\xi)]f(x)\Vert_{L^2(G_x\times G_z)}\\
    &\leq \sup_{\Vert f\Vert_{L^2}=1}\sum_{|\alpha|\leq {[n/2]+1}}\sup_{z\in G}\Vert \textnormal{Op}[[\xi]\mapsto X_{z}^{\alpha}\sigma_{R_p}(z,[\xi])\widehat{\mathcal{M}}_{(Q+\varepsilon)p}(\xi)]\Vert_{\mathscr{B}(L^2)}\Vert  f\Vert_{L^2}\\
    &= \sum_{|\alpha|\leq {[n/2]+1}}\sup_{z\in G}\sup_{[\xi]\in \widehat{G}}\Vert X_{z}^{\alpha}\sigma_{R_p}(z,[\xi])\widehat{\mathcal{M}}_{(Q+\varepsilon)p}(\xi)\Vert_{\textnormal{op}}\\
    &\lesssim_{n}\sup_{|\alpha|\leq {[n/2]+1}} \sup_{ (z,[\xi])\in G\times \widehat{G}}\Vert X_{z}^{\alpha}\sigma_{R_p}(z,[\xi])\widehat{\mathcal{M}}_{(Q+\varepsilon)p}(\xi)\widehat{\mathcal{M}}^{\rho-\varepsilon p}(\xi)\Vert_{\textnormal{op}} \Vert\widehat{\mathcal{M}}^{-\rho+\varepsilon p}(\xi)\Vert_{\textnormal{op}}\\
    &\lesssim 1, 
\end{align*}where we have used that $$ \sup_{ (z,[\xi])\in G\times \widehat{G}}\Vert X_{z}^{\alpha} \sigma_{R_p}(z,[\xi])\widehat{\mathcal{M}}_{(Q+\varepsilon)p}(\xi)\widehat{\mathcal{M}}^{\rho-\varepsilon p}(\xi)\Vert_{\textnormal{op}}=O(1),\textnormal{ } \Vert\widehat{\mathcal{M}}^{-\rho+\varepsilon p}(\xi)\Vert_{\textnormal{op}}\leq 1,$$ in view of \eqref{about:order} and the fact that $-\rho+\varepsilon p<0$. The analysis above, and the positivity of the operator ${\mathcal{M}}_{-(Q+\varepsilon)p},$  imply that
\begin{align*}
  |\lim_{p\rightarrow 1^{+}}(p-1)\textnormal{\bf{Tr}}(R_p)| &=   \lim_{p\rightarrow 1^{+}}(p-1) |\textnormal{\bf{Tr}}(R_p)|\\
  &\leq \lim_{p\rightarrow 1^{+}}(p-1)C\Vert R_p{\mathcal{M}}_{(Q+\varepsilon)p} \Vert_{\mathscr{B}(L^2(G))}|\textnormal{\bf{Tr}}({\mathcal{M}}_{-(Q+\varepsilon)p})|\\
  &\lesssim \lim_{p\rightarrow 1^{+}}(p-1)|\textnormal{\bf{Tr}}({\mathcal{M}}_{-(Q+\varepsilon)p})|\\
  & =\lim_{p\rightarrow 1^{+}}(p-1)\textnormal{\bf{Tr}}({\mathcal{M}}^{-(Q+\varepsilon)p})\\
  &=\textnormal{\bf{Tr}}_\omega({\mathcal{M}}^{-(Q+\varepsilon)}).
\end{align*}
Because of Lemma \ref{beautifulproof}, we have that $\textnormal{\bf{Tr}}_\omega({\mathcal{M}}^{-(Q+\varepsilon)})=0,$ and then   $$\lim_{p\rightarrow 1^{+}}(p-1)\textnormal{\bf{Tr}}(R_p)=0.$$  Consequently,
\begin{align*}
     \textnormal{\bf{Tr}}_\omega(A)=\lim_{p\rightarrow 1^{+}}(p-1) \left( \textnormal{\bf{Tr}}[\textnormal{Op}(\sigma_{-Q}^{p})] +\textnormal{\bf{Tr}}[R_{p}]\right)=\int\limits_{G}\Vert \sigma_{-Q}(x,\cdot) \Vert_{\mathcal{L}^{(1,\infty)}(\widehat{G})}dx.
\end{align*}Thus, we end the proof by applying Lemma \ref{lemmadix}. 
\end{proof}
Now, we will use the functional calculus to remove the positivity assumptions in Lemma \ref{thmmLieGroup}.

\begin{thm}\label{thmmLieGroup:2} Let   $A\in \Psi^{-Q,\mathcal{L}}_{\rho,0}(G\times  \widehat{G})$ be an $\mathcal{L}$-elliptic pseudo-differential operator  and let $0<\rho\leq 1$. Assume that the symbol of $A$ admits an asymptotic expansion
\begin{equation*}
    \sigma_A(x,[\xi])\sim \sum_{k=-\infty}^{-Q}\sigma_{k}(x,[\xi]),\,
\end{equation*}in components with decreasing order, which means that, for any $N\in \mathbb{N},$
\begin{equation*}
     \sigma_A(x,[\xi])- \sum_{k=-N-Q}^{-Q}\sigma_{k}(x,[\xi])\in S^{-(N+1)\rho-Q}_{\rho,0}(G\times \widehat{G}).
\end{equation*}
Then $|A|\in \mathcal{L}^{(1,\infty)}(L^2(G)),$
\begin{equation*}
    \textnormal{\bf{Tr}}_{\omega}(|A|)=\int\limits_{G}\Vert \sigma_{-Q}(x,[\xi]) \Vert_{\mathcal{L}^{(1,\infty)}(\widehat{G})}dx,
\end{equation*}
and we have
\begin{equation}
     \textnormal{Det}_{\omega,|A|}(\lambda)= \exp\left(\textnormal{\bf{Tr}}_{\omega}(|A|)\lambda\right)=\exp\left(\int\limits_{G}\Vert \sigma_{-Q}(x,[\xi]) \Vert_{\mathcal{L}^{(1,\infty)}(\widehat{G})}dx\cdot \lambda\right),
\end{equation} for $\lambda\in \mathbb{C}$ with  $|\lambda|$ small enough,
and $$\textnormal{Det}_{\omega,|A|}'(0):=\frac{d}{d\lambda}\textnormal{Det}_{\omega,|A|}(\lambda)|_{\lambda=0}=\textnormal{\bf{Tr}}_\omega(|A|).$$
\end{thm}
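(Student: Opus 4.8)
The plan is to deduce the general (not necessarily positive) case from the positive case already established in Lemma \ref{thmmLieGroup}, by applying that lemma not to $A$ but to the positive operator $|A|:=(A^*A)^{\frac{1}{2}}$. First I would use the symbolic calculus of Theorem \ref{calculus} to locate $A^*A$ in the subelliptic classes. By the adjoint expansion in Theorem \ref{calculus} (i), the operator $A^*$ lies in $\Psi^{-Q,\mathcal{L}}_{\rho,0}(G\times\widehat{G})$ with principal symbol $\sigma_{-Q}(x,[\xi])^*$; by the composition expansion in Theorem \ref{calculus} (ii), the product $A^*A$ then belongs to $\Psi^{-2Q,\mathcal{L}}_{\rho,0}(G\times\widehat{G})$ and has principal symbol
\[
\sigma_{-Q}(x,[\xi])^*\sigma_{-Q}(x,[\xi])=|\sigma_{-Q}(x,[\xi])|^2,
\]
the remainder lying in $S^{-2Q-\rho,\mathcal{L}}_{\rho,0}(G\times\widehat{G})$. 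Since $A$ is $\mathcal{L}$-elliptic of order $-Q$, the symbol $\sigma_{-Q}(x,[\xi])$ is invertible for $\langle\xi\rangle\geq N$ and obeys the bound of Definition \ref{IesTParametrix}, so that $A^*A$ is a positive, self-adjoint, $\mathcal{L}$-elliptic operator of order $-2Q$.

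Next I would take the square root through the functional calculus of \cite{CR20} for positive $\mathcal{L}$-elliptic operators, concluding that $|A|=(A^*A)^{\frac{1}{2}}\in\Psi^{-Q,\mathcal{L}}_{\rho,0}(G\times\widehat{G})$ is again positive and $\mathcal{L}$-elliptic, with principal symbol $(\sigma_{-Q}^*\sigma_{-Q})^{\frac{1}{2}}=|\sigma_{-Q}(x,[\xi])|\geq 0$ and a full asymptotic expansion in components of decreasing order exactly of the form \eqref{asymp:exp}. The only subtlety is that $A^*A$, having negative order, is compact and may carry a finite-dimensional kernel equal to $\ker A$; on $(\ker A)^{\perp}$ it is positive-definite, so its square root is unambiguous there, and replacing $A^*A$ by $A^*A+P_0$, with $P_0$ the finite-rank smoothing projection onto $\ker A$, alters neither the symbol class nor the principal symbol. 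As $P_0$ is trace class and $\textnormal{\bf{Tr}}_{\omega}$ vanishes on $S_1(L^2(G))$ by Theorem \ref{Dixm:Theorem}, this finite-rank adjustment leaves the Dixmier trace untouched.

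Once $|A|$ is recognised as a positive $\mathcal{L}$-elliptic operator in $\Psi^{-Q,\mathcal{L}}_{\rho,0}(G\times\widehat{G})$ with nonnegative principal symbol $|\sigma_{-Q}|$, Lemma \ref{thmmLieGroup} applies verbatim to $|A|$ and gives $|A|\in\mathcal{L}^{(1,\infty)}(L^2(G))$ together with
\[
\textnormal{\bf{Tr}}_{\omega}(|A|)=\int\limits_{G}\Vert\,|\sigma_{-Q}(x,[\xi])|\,\Vert_{\mathcal{L}^{(1,\infty)}(\widehat{G})}\,dx.
\]
It then remains to note that the norm $\Vert\cdot\Vert_{\mathcal{L}^{(1,\infty)}(\widehat{G})}$ is defined through $\textnormal{Tr}(|\cdot|)$, so that $\big|\,|\sigma_{-Q}|\,\big|=|\sigma_{-Q}|$ forces $\Vert\,|\sigma_{-Q}(x,[\xi])|\,\Vert_{\mathcal{L}^{(1,\infty)}(\widehat{G})}=\Vert\sigma_{-Q}(x,[\xi])\Vert_{\mathcal{L}^{(1,\infty)}(\widehat{G})}$, which yields the stated trace formula. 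Finally, since $|A|$ is a positive $\mathcal{L}$-elliptic operator of order $-Q$, the powers $|A|^{p}$ have order $-Qp<-Q$ and are trace class for $1<p<\infty$ (as in the proof of Lemma \ref{thmmLieGroup}), so $|A|\in\mathcal{L}^{(1,\infty)}_{adm}(L^2(G))$ and the determinant identity follows from Lemma \ref{lemmadix} just as in \eqref{dixmierdet22}. The main obstacle is precisely the functional-calculus step: one must verify that the operator square root preserves membership in the subelliptic H\"ormander class $\Psi^{\cdot,\mathcal{L}}_{\rho,0}$ and reproduces the principal symbol $|\sigma_{-Q}|$, which rests on the smoothness and ellipticity of $(\sigma_{-Q}^*\sigma_{-Q})^{\frac{1}{2}}$ away from the finitely many exceptional $[\xi]$ and on the complex-power/parametrix constructions of \cite{CR20}.
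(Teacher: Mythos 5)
Your proposal is correct and follows essentially the same route as the paper's own proof: identify the principal symbol of $A^*A$ as $|\sigma_{-Q}|^2$ via the subelliptic calculus of Theorem \ref{calculus}, pass to $|A|=\sqrt{A^*A}$ with principal symbol $|\sigma_{-Q}|$ using the functional calculus of \cite{CR20}, apply the positive case (Lemma \ref{thmmLieGroup}) together with the identity $\Vert\,|\sigma_{-Q}|\,\Vert_{\mathcal{L}^{(1,\infty)}(\widehat{G})}=\Vert\sigma_{-Q}\Vert_{\mathcal{L}^{(1,\infty)}(\widehat{G})}$, and conclude with Lemma \ref{lemmadix}. Your additional treatment of a possible finite-dimensional kernel of $A$ (correcting by a finite-rank projection, which is invisible to $\textnormal{\bf{Tr}}_{\omega}$) is a careful refinement of a point the paper leaves implicit, but it does not change the argument's structure.
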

\begin{proof} From the subelliptic calculus (see (i) in  Theorem \ref{calculus}), we have that the matrix-valued symbol $\sigma_{A^*A}$ of $A^*A,$ admits the asymptotic expansion 
\begin{equation*}
    \sigma_{A^*A}(x,[\xi])\sim \sum_{k=-\infty}^{-2Q}\tilde{\sigma}_{k}(x,[\xi]),\,
\end{equation*}in components with decreasing order, which means that, for any $N\in \mathbb{N},$
\begin{equation*}
     \sigma_A(x,[\xi])- \sum_{k=-N-2Q}^{-2Q}{\tilde\sigma}_{k}(x,[\xi])\in S^{-(N+1)\rho-2Q}_{\rho,0}(G\times \widehat{G}),
\end{equation*}with the principal term given by
\begin{equation*}
   {\tilde\sigma}_{-2Q}(x,[\xi])={\sigma}_{-Q}(x,[\xi])^*{\sigma}_{-Q}(x,[\xi])=|{\sigma}_{-Q}(x,[\xi])|^2, 
\end{equation*}and consequently positive for any $(x,[\xi]).$ The functional calculus in \cite{CR20}, implies that the symbol of $|A|:=\sqrt{A^*A},$ admits an asymptotic expansion of the type
\begin{equation*}
    \sigma_{|A|}(x,[\xi])\sim \sum_{k=-\infty}^{-Q}{\sigma'}_{k}(x,[\xi]),\,
\end{equation*}that is, for any $N\in \mathbb{N},$
\begin{equation*}
     \sigma_{|A|}(x,[\xi])- \sum_{k=-N-Q}^{-Q}{\sigma'}_{k}(x,[\xi])\in S^{-(N+1)\rho-Q}_{\rho,0}(G\times \widehat{G}),
\end{equation*}with the principal term given by
\begin{equation*}
   {\sigma'}_{-Q}(x,[\xi])=\sqrt{{\sigma}_{-Q}(x,[\xi])^*{\sigma}_{-Q}(x,[\xi])}=|{\sigma}_{-Q}(x,[\xi])|, 
\end{equation*}which is then  positive for any $(x,[\xi]).$ Applying Lemma \ref{thmmLieGroup} to $|A|,$ and observing that 
\begin{equation*}
     \Vert \sigma'_{-Q}(x,[\xi]) \Vert_{\mathcal{L}^{(1,\infty)}(\widehat{G})}   =\Vert \sigma_{-Q}(x,[\xi]) \Vert_{\mathcal{L}^{(1,\infty)}(\widehat{G})},
\end{equation*}we conclude that
\begin{equation*}
    \textnormal{\bf{Tr}}_{\omega}(|A|)=\int\limits_{G}\Vert \sigma_{-Q}(x,[\xi]) \Vert_{\mathcal{L}^{(1,\infty)}(\widehat{G})}dx.
\end{equation*} We end the proof by applying Lemma \ref{lemmadix}.
\end{proof}
In the case of a positive subelliptic operator we can compute its Dixmier trace as follows.

\begin{cor}\label{thmmLieGroup:2:2} Let   $A\in \Psi^{-Q,\mathcal{L}}_{\rho,0}(G\times  \widehat{G})$ be a positive $\mathcal{L}$-elliptic pseudo-differential operator  and let $0<\rho\leq 1$. Assume that the symbol of $A$ admits an asymptotic expansion
\begin{equation*}
    \sigma_A(x,[\xi])\sim \sum_{k=-\infty}^{-Q}\sigma_{k}(x,[\xi]),\,
\end{equation*}in components with decreasing order, which means that, for any $N\in \mathbb{N},$
\begin{equation}\label{asymp:exp:2:2}
     \sigma_A(x,[\xi])- \sum_{k=-N-Q}^{-Q}\sigma_{k}(x,[\xi])\in S^{-(N+1)\rho-Q}_{\rho,0}(G\times \widehat{G}).
\end{equation}
Then $A\in \mathcal{L}^{(1,\infty)}(L^2(G))$ and its Dixmier trace can be computed as
\begin{equation}
    \textnormal{\bf{Tr}}_{\omega}(A)=\int\limits_{G}\Vert \sigma_{-Q}(x,[\xi]) \Vert_{\mathcal{L}^{(1,\infty)}(\widehat{G})}dx.
\end{equation}
Furthermore, we have
\begin{equation}\label{dixmierdet22:2:2}
     \textnormal{Det}_{\omega,A}(\lambda)= \exp\left(\textnormal{\bf{Tr}}_{\omega}(A)\lambda\right)=\exp\left(\int\limits_{G}\Vert \sigma_{-Q}(x,[\xi]) \Vert_{\mathcal{L}^{(1,\infty)}(\widehat{G})}dx\cdot \lambda\right),
\end{equation} for $\lambda\in \mathbb{C}$ with  $|\lambda|$ small enough,
$$\textnormal{Det}_{\omega,A}'(0):=\frac{d}{d\lambda}\textnormal{Det}_{\omega,A}(\lambda)|_{\lambda=0}=\textnormal{\bf{Tr}}_\omega(A),$$
and the determinant formula
 $$\textnormal{\bf Det}_{\omega}(I+A)=\exp\left(\int\limits_{G}\Vert \sigma_{-Q}(x,[\xi]) \Vert_{\mathcal{L}^{(1,\infty)}(\widehat{G})}dx\right)$$ holds true.
\end{cor}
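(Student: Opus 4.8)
The plan is to deduce the statement from Theorem \ref{thmmLieGroup:2} by exploiting the positivity of $A$. Since $A\geq 0$ is self-adjoint with non-negative spectrum, the functional calculus gives $|A|=\sqrt{A^{*}A}=\sqrt{A^{2}}=A$. The hypotheses on $A$ in the corollary are exactly those of Theorem \ref{thmmLieGroup:2}, namely $A\in\Psi^{-Q,\mathcal{L}}_{\rho,0}(G\times\widehat{G})$, $\mathcal{L}$-elliptic, with the decreasing asymptotic expansion \eqref{asymp:exp:2:2}; crucially, no positivity of the principal symbol need be imposed, since that theorem already treats an arbitrary elliptic operator through its absolute value. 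Applying it to $A$ yields $|A|\in\mathcal{L}^{(1,\infty)}(L^{2}(G))$ together with
\begin{equation*}
\textnormal{\bf{Tr}}_{\omega}(|A|)=\int\limits_{G}\Vert \sigma_{-Q}(x,[\xi]) \Vert_{\mathcal{L}^{(1,\infty)}(\widehat{G})}dx,
\end{equation*}
and the identification $|A|=A$ turns this into the asserted Dixmier trace formula; in particular $A\in\mathcal{L}^{(1,\infty)}(L^{2}(G))$. I note that the symbol $\mathcal{L}^{(1,\infty)}$-norm involves $\textnormal{Tr}(|\sigma_{-Q}|)$, so the right-hand side is unaffected by whether $\sigma_{-Q}$ is itself non-negative, which is precisely what makes the passage through $|A|$ harmless.

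The determinant statements then follow from the machinery of Lemma \ref{lemmadix}. First I would check that $A$ belongs to the admissible class $\mathcal{L}^{(1,\infty)}_{adm}(L^{2}(G))$ of Definition \ref{def:admisible}: the operator $A$ is positive, it lies in $\mathcal{L}^{(1,\infty)}(L^{2}(G))$ by the previous paragraph, and $A^{p}\in S_{1}(L^{2}(G))$ for every $1<p<\infty$ because $A^{p}$ has subelliptic order $-Qp<-Q$ and is therefore trace class by Corollary 8.14 of \cite{CR20}, exactly the trace-class step already used in Lemma \ref{thmmLieGroup}. With admissibility in hand, Lemma \ref{lemmadix} gives $\textnormal{Det}_{\omega,A}(\lambda)=\exp(\textnormal{\bf{Tr}}_{\omega}(A)\lambda)$ for $|\lambda|$ small, whence $\textnormal{Det}_{\omega,A}'(0)=\textnormal{\bf{Tr}}_{\omega}(A)$, and substituting the trace formula produces \eqref{dixmierdet22:2:2}. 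For the final identity I would simply evaluate at $\lambda=1$: by the definition of the regularised determinant $\textnormal{\bf Det}_{\omega}(I+A)=\textnormal{Det}_{\omega,A}(1)=\exp(\textnormal{\bf{Tr}}_{\omega}(A))$, and inserting the integral expression for $\textnormal{\bf{Tr}}_{\omega}(A)$ completes the proof.

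The genuinely analytic content—the existence of $\lim_{p\rightarrow 1^{+}}(p-1)\textnormal{\bf{Tr}}(A^{p})$, the dominated-convergence interchange of limit and integral, and the vanishing of the lower-order remainder $R_p$—has already been carried out in Lemma \ref{thmmLieGroup} and absorbed into Theorem \ref{thmmLieGroup:2}. Consequently I expect no substantial obstacle here: the corollary is a specialization, and the only points requiring attention are the elementary identity $A=|A|$, which rests solely on positivity and the functional calculus, and the verification that $A$ is admissible so that Lemma \ref{lemmadix} applies.
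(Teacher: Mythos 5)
Your proof is correct and follows essentially the same route as the paper, whose entire proof of this corollary is the observation that positivity gives $A=|A|$ so that Theorem \ref{thmmLieGroup:2} applies directly. Your extra steps --- verifying that $A$ lies in $\mathcal{L}^{(1,\infty)}_{adm}(L^2(G))$ via the trace-class property of $A^p$ and invoking Lemma \ref{lemmadix} before evaluating at $\lambda=1$ --- merely spell out details that the paper leaves implicit (they are already contained in the proofs of Lemma \ref{thmmLieGroup} and Theorem \ref{thmmLieGroup:2}), and are a welcome clarification for the final $\textnormal{\bf Det}_{\omega}(I+A)$ identity.
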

\begin{proof}
Since $A$ is positive, we can apply Theorem \ref{thmmLieGroup:2} to $A=|A|$ and the proof can be concluded.
\end{proof}
\begin{cor}\label{Cor:Lidskii} Under the hypothesis of Lemma \ref{thmmLieGroup}, we have
\begin{equation}\label{formulaLidskii}
    \textnormal{\bf{Tr}}_{\omega}(A)=\int\limits_{G}\Vert \sigma_{-Q}(x,[\xi]) \Vert_{\mathcal{L}^{(1,\infty)}(\widehat{G})}dx=\lim_{N\rightarrow \infty} \frac{1}{\log(N)}\sum_{j=1}^{N}s_{j}(A),
\end{equation}where $s_{j}(A)$ denotes the sequence of singular values of $A$ with multiplicities taken into account.
\end{cor}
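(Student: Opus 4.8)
The plan is to observe that the first equality in \eqref{formulaLidskii} is nothing but the conclusion of Lemma \ref{thmmLieGroup}, so the only genuinely new content is the identification of $\textnormal{\bf{Tr}}_{\omega}(A)$ with the Lidskii-type limit $\lim_{N\rightarrow\infty}\frac{1}{\log N}\sum_{j=1}^{N}s_{j}(A)$. I would extract the crucial fact from the proof of Lemma \ref{thmmLieGroup} itself: there, for the positive operator $A$, we established the existence of the limit $\lim_{p\rightarrow 1^{+}}(p-1)\textnormal{\bf{Tr}}(A^{p})$ and evaluated it as $\int_{G}\Vert\sigma_{-Q}(x,[\xi])\Vert_{\mathcal{L}^{(1,\infty)}(\widehat{G})}\,dx$.

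First I would invoke Remark \ref{MainRemark}, which records Connes' result (Proposition 4 of \cite{Connes94}): for $A\geq 0$, the existence of $\lim_{p\rightarrow 1^{+}}(p-1)\textnormal{\bf{Tr}}(A^{p})$ is equivalent to $A\in\mathcal{L}^{(1,\infty)}_{\omega}(L^{2}(G))$. Since the proof of Lemma \ref{thmmLieGroup} produced exactly this limit, we obtain $A\in\mathcal{L}^{(1,\infty)}_{\omega}(L^{2}(G))$. By Definition \ref{def:admisible}, this membership says precisely that the limit $\lim_{N\rightarrow\infty}\frac{1}{\log N}\sum_{j=1}^{N}s_{j}(A)$ exists, so the right-hand side of \eqref{formulaLidskii} is well defined.

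Finally I would appeal to Dixmier's Theorem \ref{Dixm:Theorem}: whenever this limit exists, formula \eqref{TrD} gives $\textnormal{\bf{Tr}}_{\omega}(A)=\textnormal{\bf{Tr}}_{\textnormal{Dix}}(A)=\lim_{N\rightarrow\infty}\frac{1}{\log N}\sum_{j=1}^{N}s_{j}(A)$, independently of the choice of $\omega$. Chaining this with the first equality coming from Lemma \ref{thmmLieGroup} yields \eqref{formulaLidskii}. I do not expect any serious obstacle in this corollary: all the analytic labour---verifying that $A\in\mathcal{L}^{(1,\infty)}(L^{2}(G))$, controlling the remainder $R_{p}$ via the subelliptic calculus, and computing the Connes--Moscovici limit---was already carried out in Lemma \ref{thmmLieGroup}. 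The corollary is the bookkeeping step that converts the existence of the regularised limit into the classical Lidskii form through Dixmier's theorem and the definition of the admissible class $\mathcal{L}^{(1,\infty)}_{\omega}$.
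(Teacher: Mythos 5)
Your proposal is correct and follows essentially the same route as the paper's own proof: both extract from the proof of Lemma \ref{thmmLieGroup} that $\lim_{p\rightarrow 1^{+}}(p-1)\textnormal{\bf{Tr}}(A^{p})$ exists and equals $\textnormal{\bf{Tr}}_{\omega}(A)$, invoke Connes' equivalence (Remark \ref{MainRemark}, i.e.\ \cite[Page 313]{Connes94}) to conclude that $\textnormal{\bf{Tr}}_{\textnormal{Dix}}(A)$ exists, and then apply Dixmier's Theorem \ref{Dixm:Theorem} to identify $\textnormal{\bf{Tr}}_{\omega}(A)$ with the Lidskii-type limit. The only difference is cosmetic: you route the argument explicitly through Definition \ref{def:admisible}, while the paper states the same equivalence directly.
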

\begin{proof}
It follows from the Dixmier Theorem \ref{Dixm:Theorem} that $ \textnormal{\bf Tr}_\omega(A)= \textnormal{\bf Tr}_{\textnormal{Dix}}(A),$ provided that the limit $\textnormal{\bf Tr}_{\textnormal{Dix}}(A)$ exists. However, the existence of the limit $\lim_{p\rightarrow 1^{+}}(p-1)\textnormal{\bf Tr}(A^p),$ is equivalent to the existence of $\textnormal{\bf Tr}_{\textnormal{Dix}}(A),$ \cite[Page 313]{Connes94} and in the case of their existence they have the same value, that is $\textnormal{\bf Tr}_\omega(A)$.   The identity in \eqref{formulaLidskii} now follows from the proof of   Lemma \ref{thmmLieGroup}, where we have shown that $\textnormal{\bf Tr}_\omega(A)=\lim_{p\rightarrow 1^{+}}(p-1)\textnormal{\bf Tr}(A^p).$
\end{proof}

If in Corollary \ref{thmmLieGroup:2:2} we replace the sub-Laplacian $\mathcal{L}$ by the Laplace-Beltrami operator on $G,$ then $Q=n=\dim(G)$ and we obtain the following result for the standard  H\"ormander classes on $G,$ in view of the equivalence of classes  $\Psi^{-n}_{\rho,0}(G\times \widehat{G})=\Psi^{-n}_{\rho,0}(G;\textnormal{loc})$ for all $0<\rho\leq 1.$ 

\begin{cor}\label{WRGroup:3} Let   $A\in \Psi^{-n}_{\rho,0}(G, \textnormal{loc})$ with $0<\rho\leq 1,$ be a positive elliptic  pseudo-differential  operator on $G$. Assume that the symbol of $A$ admits an asymptotic expansion
\begin{equation}
    \sigma_A(x,[\xi])\sim \sum_{k=-\infty}^{-n}\sigma_{k}(x,[\xi]),\,
\end{equation}in components with decreasing order, which means that, for any $N\in \mathbb{N},$
\begin{equation}\label{asymp:exp:3}
     \sigma_A(x,[\xi])- \sum_{k=-N-n}^{-n}\sigma_{k}(x,[\xi])\in S^{-(N+1)\rho-n}_{\rho,0}(G\times \widehat{G}).
\end{equation}
Then $A\in \mathcal{L}^{(1,\infty)}(L^2(G))$ and 
\begin{equation}
    \textnormal{\bf Tr}_\omega(A)=\int\limits_{G}\Vert \sigma_{-n}(g,[\xi]) \Vert_{\mathcal{L}^{(1,\infty)}(\widehat{G})}dg.
\end{equation}
\end{cor}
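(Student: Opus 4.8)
The plan is to obtain Corollary \ref{WRGroup:3} as a direct specialisation of Corollary \ref{thmmLieGroup:2:2}, taking the sub-Laplacian $\mathcal{L}$ to be the Laplace-Beltrami operator $\mathcal{L}_G=-\sum_{j=1}^nX_j^2$ associated with an orthonormal basis $X_1,\dots,X_n$ of $\mathfrak{g}$. Under this substitution the only things I need to check are that the Hausdorff dimension collapses to $Q=n$ and that the subelliptic H\"ormander classes reduce to the standard ones; after that the statement is immediate.

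First I would compute the Hausdorff dimension. Choosing the full orthonormal basis $X=\{X_1,\dots,X_n\}$ as the H\"ormander system, these vector fields already span $T_xG$ at every point, so the H\"ormander condition holds with step $\kappa=1$ and the filtration is trivial, $H_x^1G=T_xG$ for all $x\in G$. Hence $\dim(H^1G)=n$, the summation in \eqref{Hausdorff-dimension} is empty, and $Q=\dim(H^1G)=n$.

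Next I would identify the symbol classes. With $\mathcal{L}=\mathcal{L}_G$, the symbol of $\mathcal{M}=(1+\mathcal{L}_G)^{1/2}$ has eigenvalues equal to the weights $\langle\xi\rangle=(1+\lambda_{[\xi]})^{1/2}$ of \eqref{weight}, so the subelliptic estimates \eqref{InIC}--\eqref{InIIC} become the standard H\"ormander estimates \eqref{HormanderSymbolMatrix}, giving $S^{m,\mathcal{L}}_{\rho,0}(G\times\widehat{G})=S^{m}_{\rho,0}(G\times\widehat{G})$ for every $m$ and every $0<\rho\le 1$; likewise $\mathcal{L}_G$-ellipticity in the sense of Definition \ref{IesTParametrix} coincides with ordinary ellipticity. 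Combining this with the equivalence $\Psi^{-n}_{\rho,0}(G\times\widehat{G})=\Psi^{-n}_{\rho,0}(G;\textnormal{loc})$ recorded above, I see that the positive elliptic operator $A$ of the statement is a positive $\mathcal{L}_G$-elliptic element of $\Psi^{-Q,\mathcal{L}}_{\rho,0}(G\times\widehat{G})$ with $Q=n$, whose expansion \eqref{asymp:exp:3} is exactly of the form \eqref{asymp:exp:2:2}. Corollary \ref{thmmLieGroup:2:2} then applies verbatim and delivers both $A\in\mathcal{L}^{(1,\infty)}(L^2(G))$ and the trace formula.

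I do not expect any analytic obstacle in this argument: all the real work (the $L^2$-boundedness estimates, the dominated-convergence step, and the reduction via the Connes-Moscovici formula) has already been carried out in Lemma \ref{thmmLieGroup} and Theorem \ref{thmmLieGroup:2}. The only point requiring care is the purely bookkeeping verification that, under $\mathcal{L}\mapsto\mathcal{L}_G$, the geometric parameter $Q$ degenerates to $n$ and the two scales of symbol classes coincide, so that the hypotheses of Corollary \ref{thmmLieGroup:2:2} are literally satisfied.
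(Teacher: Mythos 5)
Your proposal is correct and is essentially the paper's own argument: the paper obtains Corollary \ref{WRGroup:3} precisely by replacing the sub-Laplacian $\mathcal{L}$ with the Laplace--Beltrami operator $\mathcal{L}_G$ in Corollary \ref{thmmLieGroup:2:2}, noting $Q=n=\dim(G)$ and invoking the equivalence $\Psi^{-n}_{\rho,0}(G\times\widehat{G})=\Psi^{-n}_{\rho,0}(G;\textnormal{loc})$. Your write-up merely makes explicit the bookkeeping the paper leaves implicit (the step-one Hausdorff dimension computation and the coincidence of the subelliptic and standard symbol classes via $\widehat{\mathcal{M}}(\xi)=\langle\xi\rangle I_{d_\xi}$), which is a faithful filling-in of the same route.
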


In view of the Connes equivalence Theorem \ref{equivalence}  and the equivalence of classes $\Psi^{-n}_{1,0}(G\times \widehat{G})=\Psi^{-n}_{1,0}(G;\textnormal{loc})$ proved in \cite{RuzhanskyTurunenWirth2014} we have:
\begin{cor}\label{WRGroup} Let   $A\in \Psi^{-n}_{cl}(G, \textnormal{loc})$ be a positive and  elliptic  pseudo-differential  operator on $G$. Assume that the symbol of $A$ admits an asymptotic expansion
\begin{equation}
    \sigma_A(x,[\xi])\sim \sum_{k=-\infty}^{-n}\sigma_{k}(x,[\xi]),\,
\end{equation}in components with decreasing order, which means that, for any $N\in \mathbb{N},$
\begin{equation}\label{asymp:exp:4}
     \sigma_A(x,[\xi])- \sum_{k=-N-n}^{-n}\sigma_{k}(x,[\xi])\in S^{-(N+1)-n}_{1,0}(G\times \widehat{G}).
\end{equation}
Then the Wodzicki residue of $A$ can be computed according to the formula
\begin{equation}
    \textnormal{res}(A)=\int\limits_{G}\Vert \sigma_{-n}(x,[\xi]) \Vert_{\mathcal{L}^{(1,\infty)}(\widehat{G})}dx.
\end{equation}
\end{cor}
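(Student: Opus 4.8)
The plan is to obtain this statement as a direct consequence of the two previously established results that sit on either side of the Connes equivalence: the Dixmier-trace formula for positive elliptic operators in the standard H\"ormander calculus (Corollary \ref{WRGroup:3}) and the identity $\textnormal{res}(A)=\textnormal{\bf{Tr}}_{\omega}(A)$ for classical operators of critical order (Theorem \ref{equivalence}). Since both the Wodzicki residue and the right-hand side of the desired formula are tied to the Dixmier trace of the \emph{same} operator $A$, the proof reduces to checking that $A$ simultaneously satisfies the hypotheses of both results and then invoking transitivity through $\textnormal{\bf{Tr}}_{\omega}(A)$.

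First I would verify the membership required by Corollary \ref{WRGroup:3}. A classical operator $A\in\Psi^{-n}_{cl}(G,\textnormal{loc})$ of order $-n$ lies in the H\"ormander class $\Psi^{-n}_{1,0}(G,\textnormal{loc})$, and by the equivalence of classes $\Psi^{-n}_{1,0}(G,\textnormal{loc})=\Psi^{-n}_{1,0}(G\times\widehat{G})$ proved in \cite{RuzhanskyTurunenWirth2014}, $A$ is described by a global matrix-valued symbol $\sigma_A\in S^{-n}_{1,0}(G\times\widehat{G})$. The asymptotic expansion \eqref{asymp:exp:4} is precisely the instance $\rho=1$ of the hypothesis \eqref{asymp:exp:3}, while positivity and ellipticity are assumed; here one uses that replacing the sub-Laplacian by the Laplace-Beltrami operator forces the Hausdorff dimension to equal $Q=n=\dim(G)$, as recorded before Corollary \ref{WRGroup:3}. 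Corollary \ref{WRGroup:3} then applies and yields $A\in\mathcal{L}^{(1,\infty)}(L^2(G))$ together with
\begin{equation*}
    \textnormal{\bf{Tr}}_{\omega}(A)=\int\limits_{G}\Vert \sigma_{-n}(x,[\xi]) \Vert_{\mathcal{L}^{(1,\infty)}(\widehat{G})}\,dx.
\end{equation*}

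On the other side, since $A$ is classical of order $-n=-\dim(G)$, the Connes equivalence Theorem \ref{equivalence} gives $\textnormal{res}(A)=\textnormal{\bf{Tr}}_{\omega}(A)$ for any admissible Dixmier functional $\omega$. Combining the two identities produces the claimed formula. Because this is essentially a composition of two theorems, no substantial difficulty arises; the one point that genuinely merits attention—rather than being routine—is that the component $\sigma_{-n}$ appearing in the formula of Corollary \ref{WRGroup:3} is the order $-n$ term of the \emph{global} symbol expansion, and one should confirm it is exactly the object whose norm $\Vert\cdot\Vert_{\mathcal{L}^{(1,\infty)}(\widehat{G})}$ is being evaluated. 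Since both corollaries refer to the same operator and the same global expansion, this identification is immediate, and in particular no matching between the local homogeneous components $\sigma^A_{-n}$ of the classical calculus and the global components is needed: the residue side is handled entirely by Theorem \ref{equivalence} and the symbol side entirely by Corollary \ref{WRGroup:3}, with $\textnormal{\bf{Tr}}_{\omega}(A)$ serving as the bridge.
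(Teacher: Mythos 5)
Your proposal is correct and follows essentially the same route as the paper: the corollary is stated there as an immediate consequence of Corollary \ref{WRGroup:3} (the Dixmier-trace formula with $Q=n$ for the Laplace--Beltrami case) combined with Connes's equivalence Theorem \ref{equivalence}, with $\textnormal{\bf{Tr}}_{\omega}(A)$ as the bridge. Your verification of the hypotheses (classical $\subset$ $\Psi^{-n}_{1,0}$, the class equivalence from \cite{RuzhanskyTurunenWirth2014}, and \eqref{asymp:exp:4} being the $\rho=1$ instance of \eqref{asymp:exp:3}) simply fills in the details the paper leaves implicit.
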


Now, we present the main theorem of this section.

\begin{thm}\label{Residue:Thm} Let   $A\in \Psi^{-Q,\mathcal{L}}_{\rho,0}(G\times  \widehat{G})$ be an $\mathcal{L}$-elliptic pseudo-differential operator  and let $0<\rho\leq 1$. Assume that the symbol of $A$ admits an asymptotic expansion
\begin{equation*}
    \sigma_A(x,[\xi])\sim \sum_{k=-\infty}^{-Q}\sigma_{k}(x,[\xi]),\,
\end{equation*}in components with decreasing order, which means that, for any $N\in \mathbb{N},$
\begin{equation*}
     \sigma_A(x,[\xi])- \sum_{k=-N-Q}^{-Q}\sigma_{k}(x,[\xi])\in S^{-(N+1)\rho-Q}_{\rho,0}(G\times \widehat{G}).
\end{equation*}
Then $A\in \mathcal{L}^{(1,\infty)}(L^2(G)),$ and its Dixmier trace is given by
\begin{eqnarray*}
     \textnormal{\bf{Tr}}_{\omega}(A)=\int\limits_{G}\left(\Vert \textnormal{Re}(\sigma_{-Q}(x,[\xi]))^{+} \Vert_{\mathcal{L}^{(1,\infty)}(\widehat{G})}-\Vert \textnormal{Re}(\sigma_{-Q}(x,[\xi]))^{-} \Vert_{\mathcal{L}^{(1,\infty)}(\widehat{G})}
     \right)dx\\
    +i\int\limits_{G}\left(\Vert \textnormal{Im}(\sigma_{-Q}(x,[\xi]))^{+} \Vert_{\mathcal{L}^{(1,\infty)}(\widehat{G})}-\Vert \textnormal{Im}(\sigma_{-Q}(x,[\xi]))^{-} \Vert_{\mathcal{L}^{(1,\infty)}(\widehat{G})}
    \right)dx.
\end{eqnarray*}
\end{thm}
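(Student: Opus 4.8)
The plan is to reduce the general, possibly non-self-adjoint, case to the positive case already handled in Corollary \ref{thmmLieGroup:2:2}, via the Connes decomposition recorded in Remark \ref{Deco:remark}. First I would write $A=\textnormal{Re}(A)+i\,\textnormal{Im}(A)$ with $\textnormal{Re}(A)=\tfrac12(A+A^{*})$ and $\textnormal{Im}(A)=\tfrac{1}{2i}(A-A^{*})$ self-adjoint. By part (i) of Theorem \ref{calculus}, taking adjoints preserves the class, so both operators lie in $\Psi^{-Q,\mathcal{L}}_{\rho,0}(G\times\widehat{G})$ and inherit asymptotic expansions of the same type as $A$; since the $\alpha=0$ term of the expansion of $\sigma_{A^{*}}$ is $\sigma_{A}^{*}$, the principal symbols are the Hermitian matrices $\textnormal{Re}(\sigma_{-Q})=\tfrac12(\sigma_{-Q}+\sigma_{-Q}^{*})$ and $\textnormal{Im}(\sigma_{-Q})=\tfrac{1}{2i}(\sigma_{-Q}-\sigma_{-Q}^{*})$. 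Linearity of $\textnormal{\bf{Tr}}_{\omega}$ together with Remark \ref{Deco:remark} then reduces the statement to computing the four Dixmier traces $\textnormal{\bf{Tr}}_{\omega}(\textnormal{Re}(A)^{\pm})$ and $\textnormal{\bf{Tr}}_{\omega}(\textnormal{Im}(A)^{\pm})$.

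The core step is to show that for a self-adjoint $B\in\{\textnormal{Re}(A),\textnormal{Im}(A)\}$ with Hermitian principal symbol $\tau$, the positive and negative parts $B^{\pm}=\tfrac12(|B|\pm B)$ are positive operators in $\Psi^{-Q,\mathcal{L}}_{\rho,0}(G\times\widehat{G})$ with principal symbols $\tau^{\pm}$, so that Corollary \ref{thmmLieGroup:2:2} applies and yields $\textnormal{\bf{Tr}}_{\omega}(B^{\pm})=\int_{G}\Vert\tau(x,[\xi])^{\pm}\Vert_{\mathcal{L}^{(1,\infty)}(\widehat{G})}\,dx$. This reduces to identifying $|B|=\sqrt{B^{2}}$ as an operator of order $-Q$ with principal symbol $|\tau|=\sqrt{\tau^{2}}$, exactly as $|A|=\sqrt{A^{*}A}$ was treated in the proof of Theorem \ref{thmmLieGroup:2}. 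The required integrability $\Vert\tau^{\pm}\Vert_{\mathcal{L}^{(1,\infty)}(\widehat{G})}\in L^{1}(G)$ is then immediate from the argument in Lemma \ref{thmmLieGroup}, because $\Vert\tau^{\pm}\widehat{\mathcal{M}}(\xi)^{Q}\Vert_{\textnormal{op}}\le\Vert\tau\,\widehat{\mathcal{M}}(\xi)^{Q}\Vert_{\textnormal{op}}$ is uniformly bounded in $(x,[\xi])$.

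The hard part will be this functional-calculus step for $\sqrt{B^{2}}$. In Theorem \ref{thmmLieGroup:2} the square root was harmless because $A^{*}A$ is $\mathcal{L}$-elliptic, its principal symbol $|\sigma_{-Q}|^{2}$ being positive \emph{definite}, so $\sqrt{\cdot}$ acts on a symbol bounded away from zero; here $\textnormal{Re}(A)$ and $\textnormal{Im}(A)$ need \emph{not} be $\mathcal{L}$-elliptic, the Hermitian matrix $\tau$ may be singular, $B^{2}$ then has a degenerate principal symbol $\tau^{2}$, and $t\mapsto\sqrt{t}$ (equivalently $t\mapsto t^{+}$) is not smooth at the origin. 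To circumvent this I would freeze the spatial variable as in Lemma \ref{thmmLieGroup}: for fixed $z\in G$ the multiplier $B_{z}$ has Hermitian symbol $\tau(z,\cdot)$, and its positive and negative parts are the pointwise matrix parts $\tau(z,[\xi])^{\pm}$, defined by the spectral decomposition of each finite matrix with neither smoothness nor invertibility required. The multiplier formula of \cite{cdc20} (Remark \ref{remark:FM}) gives $\textnormal{\bf{Tr}}_{\omega}((B_{z})^{\pm})=\Vert\tau(z,\cdot)^{\pm}\Vert_{\mathcal{L}^{(1,\infty)}(\widehat{G})}$, after which the passage to $\textnormal{\bf{Tr}}_{\omega}(B^{\pm})=\lim_{p\to1^{+}}(p-1)\textnormal{\bf{Tr}}((B^{\pm})^{p})$ is carried out verbatim as in Lemma \ref{thmmLieGroup}, through the right-convolution kernel at $e_{G}$ and the dominated-convergence interchange of limit and integral, any discrepancy of order strictly below $-Q$ between $B^{\pm}$ and $\textnormal{Op}(\tau^{\pm})$ being annihilated by $\textnormal{\bf{Tr}}_{\omega}$ in view of Lemma \ref{beautifulproof}. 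Substituting the four resulting identities into the decomposition of Remark \ref{Deco:remark} yields the asserted formula and completes the proof.
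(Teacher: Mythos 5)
Your overall route is the same as the paper's: the Connes decomposition of Remark \ref{Deco:remark} into $\textnormal{Re}(A)^{\pm}$, $\textnormal{Im}(A)^{\pm}$, linearity of $\textnormal{\bf{Tr}}_{\omega}$, identification of the principal symbols of the four positive pieces as $\textnormal{Re}(\sigma_{-Q})^{\pm}$, $\textnormal{Im}(\sigma_{-Q})^{\pm}$, and application of the positive case to each piece. (The membership $A\in\mathcal{L}^{(1,\infty)}(L^2(G))$, which the paper gets from the polar decomposition $A=|A|U$, the ideal property, and Theorem \ref{thmmLieGroup:2}, you leave implicit; it would follow once the four pieces are controlled.) Where the two arguments part ways is the key step: the paper identifies the symbols of $\textnormal{Re}(A)^{\pm}$ and $\textnormal{Im}(A)^{\pm}$, modulo lower-order terms, by a direct appeal to the subelliptic functional calculus of \cite{CR20}, and then applies Corollary \ref{thmmLieGroup:2:2} to each piece; you decline to invoke that calculus — rightly observing that $\textnormal{Re}(A)$ and $\textnormal{Im}(A)$ need not be $\mathcal{L}$-elliptic and that $t\mapsto t^{\pm}$ is not smooth at $0$ — and try to substitute a frozen-variable argument.

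The substitute is circular, and this is a genuine gap. To run Lemma \ref{thmmLieGroup} ``verbatim'' on $B^{\pm}=\frac{1}{2}\left(|B|\pm B\right)$ you need exactly the statement in question: that $B^{\pm}$ lies in $\Psi^{-Q,\mathcal{L}}_{\rho,0}(G\times\widehat{G})$ with an asymptotic expansion whose principal symbol is $\tau^{\pm}$; this is what produces $(B^{\pm})^{p}=\textnormal{Op}\left((\tau^{\pm})^{p}\right)+R_{p}$ with a remainder of subelliptic order $-Qp-\rho$, and it is also what Corollary \ref{thmmLieGroup:2:2} demands (together with $\mathcal{L}$-ellipticity, which $B^{\pm}$ generally fails, e.g.\ when $\tau^{+}$ vanishes on a set of positive measure). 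Likewise, annihilating ``any discrepancy of order strictly below $-Q$ between $B^{\pm}$ and $\textnormal{Op}(\tau^{\pm})$'' via Lemma \ref{beautifulproof} presupposes that this discrepancy is a pseudo-differential operator with symbol in $S^{m,\mathcal{L}}_{\rho,0}(G\times\widehat{G})$ for some $m<-Q$ — again the unproven functional-calculus claim; it is not even clear that $\textnormal{Op}(\tau^{\pm})$ itself belongs to the calculus, since the matrix operations $\tau\mapsto\tau^{\pm}$ destroy the $x$- and $\xi$-derivative estimates defining $S^{-Q,\mathcal{L}}_{\rho,0}(G\times\widehat{G})$. Your frozen-variable identity $\textnormal{\bf{Tr}}_{\omega}\left((B_{z})^{\pm}\right)=\Vert\tau(z,\cdot)^{\pm}\Vert_{\mathcal{L}^{(1,\infty)}(\widehat{G})}$ is correct, because for Fourier multipliers quantisation commutes with the pointwise matrix functional calculus; but it concerns the multipliers $(B_{z})^{\pm}$, not the operator $B^{\pm}$. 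Taking positive parts does not commute with the quantisation of $x$-dependent symbols, so nothing in your argument connects $\textnormal{\bf{Tr}}_{\omega}(B^{\pm})$ with $\int_{G}\textnormal{\bf{Tr}}_{\omega}\left((B_{z})^{\pm}\right)dz$. To complete the proof along these lines you must either prove, or cite as the paper does, a symbolic identification of $\textnormal{Re}(A)^{\pm}$ and $\textnormal{Im}(A)^{\pm}$ valid without ellipticity, or replace this step by a different mechanism altogether.
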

\begin{proof}In view of the polar decomposition $A=|A|U$ of $A,$ where $U$ is a unitary operator on $L^2(G)$ (see e.g. \cite{goh:trace}),  that $\mathcal{L}^{(1,\infty)}(L^2(G))$ is an ideal of operators in the $C^*$-algebra $\mathscr{B}(L^2(G)),$ that $|A|\in \mathcal{L}^{(1,\infty)}(L^2(G)) $ in view of Theorem \ref{thmmLieGroup:2}, we deduce that $A\in \mathcal{L}^{(1,\infty)}(L^2(G)).$ Now we will use the linearity properties of the Dixmier trace in order to compute the Dixmier trace of $A.$

We use the decomposition of $A$ into its real and imaginary part as in Remark \ref{Deco:remark},
\begin{equation*}
    \textnormal{Re}(A):=\frac{A+A^*}{2},\,\, \textnormal{Im}(A):=\frac{A-A^*}{2i},
\end{equation*}and the decomposition of $\textnormal{Re}(A)$ and $\textnormal{Im}(A)$ into their positive and negative parts,
\begin{eqnarray*}
  \textnormal{Re}(A)^{+}:=\frac{\textnormal{Re}(A)+|\textnormal{Re}(A)|}{2},\,\, \textnormal{Re}(A)^{-}:=\frac{|\textnormal{Re}(A)|-\textnormal{Re}(A)}{2},
\end{eqnarray*}
and 
\begin{eqnarray*}
  \textnormal{Im}(A)^{+}:=\frac{\textnormal{Im}(A)+|\textnormal{Im}(A)|}{2},\,\, \textnormal{Im}(A)^{-}:=\frac{|\textnormal{Im}(A)|-\textnormal{Im}(A)}{2}.
\end{eqnarray*}Now, the operator $A$ can be written as
\begin{align*}
    A&= \textnormal{Re}(A)+i\textnormal{Im}(A)\\
    &=\left(\textnormal{Re}(A)^{+}-\textnormal{Re}(A)^{-}\right)+i\left(\textnormal{Im}(A)^{+}-\textnormal{Im}(A)^{-}\right).
\end{align*}So, by the linearity of the Dixmier trace $\textnormal{Tr}_\omega$ we have
\begin{align*}
    \textnormal{Tr}_\omega (A)&= \textnormal{Tr}_\omega(\textnormal{Re}(A))+i\textnormal{Tr}_\omega(\textnormal{Im}(A))\\
    &=\left(\textnormal{Tr}_\omega(\textnormal{Re}(A)^{+})-\textnormal{Tr}_\omega(\textnormal{Re}(A)^{-})\right)+i\left(\textnormal{Tr}_\omega(\textnormal{Im}(A)^{+})-\textnormal{Tr}_\omega(\textnormal{Im}(A)^{-})\right).
\end{align*}Now, we will exploit the subelliptic functional calculus in \cite{CR20} in order to compute the symbols of the positive operators $\textnormal{Re}(A)^{+},\textnormal{Re}(A)^{-},\textnormal{Im}(A)^{+},\textnormal{Im}(A)^{-}.$ Indeed, we have
\begin{align*}
    \sigma_{\textnormal{Re}(A)}(x,[\xi])= \sigma_{\frac{A+A^*}{2}}(x,[\xi])
    =\textnormal{Re}(\sigma_{-Q}(x,[\xi]))+\textnormal{lower order terms},
\end{align*}
\begin{align*}
    \sigma_{\textnormal{Im}(A)}(x,[\xi])= \sigma_{\frac{A-A^*}{2i}}(x,[\xi])
    =\textnormal{Im}(\sigma_{-Q}(x,[\xi]))+\textnormal{lower order terms},
\end{align*}
\begin{align*}
    \sigma_{\textnormal{Re}(A)^{+}}(x,[\xi])= \sigma_{\frac{\textnormal{Re}(A)+|\textnormal{Re}(A)|}{2}}(x,[\xi])
    =\textnormal{Re}(\sigma_{-Q}(x,[\xi]))^{+}+\textnormal{lower order terms},
\end{align*}
\begin{align*}
    \sigma_{\textnormal{Re}(A)^{-}}(x,[\xi])= \sigma_{\frac{|\textnormal{Re}(A)|-\textnormal{Re}(A)}{2}}(x,[\xi])
    =\textnormal{Re}(\sigma_{-Q}(x,[\xi]))^{-}+\textnormal{lower order terms},
\end{align*}and 
\begin{align*}
    \sigma_{\textnormal{Im}(A)^{+}}(x,[\xi])= \sigma_{\frac{\textnormal{Im}(A)+|\textnormal{Im}(A)|}{2}}(x,[\xi])
    =\textnormal{Im}(\sigma_{-Q}(x,[\xi]))^{+}+\textnormal{lower order terms},
\end{align*}
\begin{align*}
    \sigma_{\textnormal{Im}(A)^{-}}(x,[\xi])= \sigma_{\frac{|\textnormal{Im}(A)|-\textnormal{Im}(A)}{2}}(x,[\xi])
    =\textnormal{Im}(\sigma_{-Q}(x,[\xi]))^{-}+\textnormal{lower order terms}.
\end{align*}
Now, by applying Corollary \ref{thmmLieGroup:2:2}, we can eliminate the lower terms when computing the Dixmier trace, and  we have the following  formulae

     \begin{equation*}
         \textnormal{\bf{Tr}}_{\omega}(\textnormal{Re}(A)^{+})=\int\limits_{G}\Vert \textnormal{Re}(\sigma_{-Q}(x,[\xi]))^{+} \Vert_{\mathcal{L}^{(1,\infty)}(\widehat{G})}dx
    \end{equation*}
    \begin{equation*}
         \textnormal{\bf{Tr}}_{\omega}(\textnormal{Re}(A)^{-})=\int\limits_{G}\Vert \textnormal{Re}(\sigma_{-Q}(x,[\xi]))^{-} \Vert_{\mathcal{L}^{(1,\infty)}(\widehat{G})}dx
    \end{equation*}
    \begin{equation*}
         \textnormal{\bf{Tr}}_{\omega}(\textnormal{Im}(A)^{+})=\int\limits_{G}\Vert \textnormal{Im}(\sigma_{-Q}(x,[\xi]))^{+} \Vert_{\mathcal{L}^{(1,\infty)}(\widehat{G})}dx
    \end{equation*}
    \begin{equation*}
         \textnormal{\bf{Tr}}_{\omega}(\textnormal{Im}(A)^{-})=\int\limits_{G}\Vert \textnormal{Im}(\sigma_{-Q}(x,[\xi]))^{-} \Vert_{\mathcal{L}^{(1,\infty)}(\widehat{G})}dx.
    \end{equation*}In view of the linearity of the Dixmier trace $ \textnormal{\bf{Tr}}_{\omega}$ and the previous trace formula we end the proof.
\end{proof}

Now, let us replace the  sub-Laplacian $\mathcal{L}$ by the Laplace-Beltrami operator on $G$ in Theorem  \ref{Residue:Thm}. As mentioned above, we obtain $Q=n=\dim(G)$ and we obtain the following result for the standard  H\"ormander classes on $G,$ in view of the equivalence of classes  $\Psi^{-n}_{\rho,0}(G\times \widehat{G})=\Psi^{-n}_{\rho,0}(G;\textnormal{loc})$ for all $0<\rho\leq 1,$ see \cite{RuzhanskyTurunenWirth2014}. 

\begin{cor}\label{WRGroup:3:3} Let   $A\in \Psi^{-n}_{\rho,0}(G, \textnormal{loc})$ with $0<\rho\leq 1,$ be an elliptic  pseudo-differential  operator on $G$. Assume that the symbol of $A$ admits an asymptotic expansion
\begin{equation}
    \sigma_A(x,[\xi])\sim \sum_{k=-\infty}^{-n}\sigma_{k}(x,[\xi]),\,
\end{equation}in components with decreasing order, which means that, for any $N\in \mathbb{N},$
\begin{equation}
     \sigma_A(x,[\xi])- \sum_{k=-N-n}^{-n}\sigma_{k}(x,[\xi])\in S^{-(N+1)\rho-n}_{\rho,0}(G\times \widehat{G}).
\end{equation}
Then $A\in \mathcal{L}^{(1,\infty)}(L^2(G))$ and its Dixmier trace is given by
\begin{eqnarray*}
     \textnormal{\bf{Tr}}_{\omega}(A)=\int\limits_{G}\left(\Vert \textnormal{Re}(\sigma_{-Q}(x,[\xi]))^{+} \Vert_{\mathcal{L}^{(1,\infty)}(\widehat{G})}-\Vert \textnormal{Re}(\sigma_{-Q}(x,[\xi]))^{-} \Vert_{\mathcal{L}^{(1,\infty)}(\widehat{G})}
     \right)dx\\
    +i\int\limits_{G}\left(\Vert \textnormal{Im}(\sigma_{-Q}(x,[\xi]))^{+} \Vert_{\mathcal{L}^{(1,\infty)}(\widehat{G})}-\Vert \textnormal{Im}(\sigma_{-Q}(x,[\xi]))^{-} \Vert_{\mathcal{L}^{(1,\infty)}(\widehat{G})}
    \right)dx.
\end{eqnarray*}
\end{cor}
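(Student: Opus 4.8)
The plan is to deduce the statement directly from Theorem \ref{Residue:Thm}, by specialising the sub-Riemannian data to the Riemannian one. Concretely, I would take the Hörmander system $X=\{X_1,\dots,X_n\}$ of Subsection \ref{Sub:Hor:Cl} to be the orthonormal basis of $\mathfrak{g}$ fixed in Remark \ref{LaplaceDefi}, so that the sub-Laplacian $\mathcal{L}=-(X_1^2+\cdots+X_n^2)$ coincides with the Casimir element $\mathcal{L}_G$. With this choice the Hörmander condition already holds at step $\kappa=1$, since the bracket-generating filtration collapses and $H^1_xG=T_xG$ for every $x\in G$; consequently the Hausdorff-dimension formula \eqref{Hausdorff-dimension} degenerates to $Q=\dim(H^1G)=\dim(G)=n$.

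The second step is to identify the two symbol calculi. Because $\mathcal{L}=\mathcal{L}_G$, the eigenvalues $\nu_{ii}(\xi)^2$ of $\widehat{\mathcal{L}}(\xi)$ are all equal to $\lambda_{[\xi]}$, so that $\widehat{\mathcal{M}}(\xi)^s=(1+\lambda_{[\xi]})^{s/2}I_{d_\xi}=\langle\xi\rangle^s I_{d_\xi}$, with the weight of \eqref{weight}. Feeding this into the subelliptic seminorms \eqref{InIC}--\eqref{InIIC} recovers exactly the matrix-symbol inequalities \eqref{HormanderSymbolMatrix}, whence $S^{m,\mathcal{L}}_{\rho,\delta}(G\times\widehat{G})=S^m_{\rho,\delta}(G\times\widehat{G})$ for all admissible $\rho,\delta$; in the same manner the $\mathcal{L}$-ellipticity condition \eqref{Iesparametrix} of Definition \ref{IesTParametrix} reduces to the standard ellipticity estimate. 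Invoking the local/global equivalence $\Psi^{-n}_{\rho,0}(G\times\widehat{G})=\Psi^{-n}_{\rho,0}(G;\textnormal{loc})$ proved in \cite{RuzhanskyTurunenWirth2014} for $0<\rho\leq 1$, every elliptic $A\in\Psi^{-n}_{\rho,0}(G,\textnormal{loc})$ satisfying the stated asymptotic expansion is an $\mathcal{L}$-elliptic operator in $\Psi^{-Q,\mathcal{L}}_{\rho,0}(G\times\widehat{G})$ with $Q=n$ and with the same component symbols $\sigma_k$.

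At this point all hypotheses of Theorem \ref{Residue:Thm} are verified, and applying that theorem yields $A\in\mathcal{L}^{(1,\infty)}(L^2(G))$ together with the asserted real/imaginary decomposition of $\textnormal{\bf{Tr}}_{\omega}(A)$ in terms of the positive and negative parts of $\textnormal{Re}(\sigma_{-n})$ and $\textnormal{Im}(\sigma_{-n})$. Since $Q=n$, the symbol written $\sigma_{-Q}$ in the conclusion is precisely the component $\sigma_{-n}$ of order $-n$, so no further rewriting is needed.

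The step I expect to require the most care is the identification $S^{m,\mathcal{L}}_{\rho,\delta}(G\times\widehat{G})=S^m_{\rho,\delta}(G\times\widehat{G})$: one must check that the differential operators $\partial_X^{(\beta)}$ built from the strongly admissible collection in Lemma \ref{Taylorseries} generate the same symbol seminorms as the left-invariant derivatives $X_x^\beta$ appearing in \eqref{HormanderSymbolMatrix}, and that the collapse of the filtration genuinely forces $Q=n$ rather than merely $Q\geq n$. Both are essentially bookkeeping once $\kappa=1$ is established, but they are the only places where the reduction could fail.
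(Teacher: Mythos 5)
Your proposal is correct and follows essentially the same route as the paper: the paper's entire proof consists of remarking that taking $\mathcal{L}=\mathcal{L}_G$ (the Laplace--Beltrami/Casimir operator) gives $Q=n$, identifying the subelliptic classes with the standard global H\"ormander classes, invoking the equivalence $\Psi^{-n}_{\rho,0}(G\times\widehat{G})=\Psi^{-n}_{\rho,0}(G;\textnormal{loc})$ from \cite{RuzhanskyTurunenWirth2014}, and then applying Theorem \ref{Residue:Thm}. Your write-up merely makes explicit the bookkeeping (step $\kappa=1$, $\widehat{\mathcal{M}}(\xi)^{s}=\langle\xi\rangle^{s}I_{d_\xi}$, coincidence of seminorms and of the ellipticity notions) that the paper leaves implicit.
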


In view of the Connes equivalence Theorem \ref{equivalence}  and the equivalence of classes $\Psi^{-n}_{1,0}(G\times \widehat{G})=\Psi^{-n}_{1,0}(G;\textnormal{loc})$ we have:
\begin{cor}\label{WRGroup:2} Let   $A\in \Psi^{-n}_{cl}(G, \textnormal{loc})$ be an  elliptic  pseudo-differential  operator on $G$. Assume that the symbol of $A$ admits an asymptotic expansion
\begin{equation}
    \sigma_A(x,[\xi])\sim \sum_{k=-\infty}^{-n}\sigma_{k}(x,[\xi]),\,
\end{equation}in components with decreasing order, which means that, for any $N\in \mathbb{N},$
\begin{equation}
     \sigma_A(x,[\xi])- \sum_{k=-N-n}^{-n}\sigma_{k}(x,[\xi])\in S^{-(N+1)-n}_{1,0}(G\times \widehat{G}).
\end{equation}
Then the Wodzicki residue of $A$ can be computed according to the formula
\begin{eqnarray*}
     \textnormal{res}(A)=\int\limits_{G}\left(\Vert \textnormal{Re}(\sigma_{-Q}(x,[\xi]))^{+} \Vert_{\mathcal{L}^{(1,\infty)}(\widehat{G})}-\Vert \textnormal{Re}(\sigma_{-Q}(x,[\xi]))^{-} \Vert_{\mathcal{L}^{(1,\infty)}(\widehat{G})}
     \right)dx\\
    +i\int\limits_{G}\left(\Vert \textnormal{Im}(\sigma_{-Q}(x,[\xi]))^{+} \Vert_{\mathcal{L}^{(1,\infty)}(\widehat{G})}-\Vert \textnormal{Im}(\sigma_{-Q}(x,[\xi]))^{-} \Vert_{\mathcal{L}^{(1,\infty)}(\widehat{G})}
    \right)dx.
\end{eqnarray*}
\end{cor}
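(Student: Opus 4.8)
The plan is to obtain this statement as the combination of two already-established facts: the Dixmier trace formula of Theorem~\ref{Residue:Thm} (equivalently its Laplace--Beltrami specialisation, Corollary~\ref{WRGroup:3:3}) and the Connes equivalence Theorem~\ref{equivalence}. The key observation is that a classical elliptic operator of order $-n$ on $G$ is simultaneously an operator to which the global-symbolic computation of $\textnormal{\bf{Tr}}_\omega$ applies and a classical operator for which Connes' identification $\textnormal{res}(A)=\textnormal{\bf{Tr}}_\omega(A)$ holds; the asserted residue formula is then just the substitution of the former into the latter.

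First I would use the equivalence of classes $\Psi^{-n}_{1,0}(G;\textnormal{loc})=\Psi^{-n}_{1,0}(G\times\widehat{G})$ from \cite{RuzhanskyTurunenWirth2014} to present $A$ through its global matrix-valued symbol $\sigma_A\in S^{-n}_{1,0}(G\times\widehat{G})$. Choosing the sub-Laplacian $\mathcal{L}$ in Theorem~\ref{Residue:Thm} to be the Laplace--Beltrami operator $\mathcal{L}_G$ forces the Hausdorff dimension to be $Q=n$, and under this choice the $\mathcal{L}$-ellipticity of Definition~\ref{IesTParametrix} reduces to the usual ellipticity hypothesised here, while the assumed expansion $\sigma_A\sim\sum_{k\leq-n}\sigma_k$ with remainder in $S^{-(N+1)-n}_{1,0}(G\times\widehat{G})$ is exactly the hypothesis of Theorem~\ref{Residue:Thm} for $\rho=1$. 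Applying that theorem then yields $A\in\mathcal{L}^{(1,\infty)}(L^2(G))$ together with the four-term real/imaginary expression for $\textnormal{\bf{Tr}}_\omega(A)$ (with $\sigma_{-Q}=\sigma_{-n}$); the decomposition into positive and negative self-adjoint parts recorded in Remark~\ref{Deco:remark} is precisely what allows the positivity assumption to be dropped. Finally, since $A$ is classical of order $-n=-\dim(G)$, Theorem~\ref{equivalence} gives $\textnormal{res}(A)=\textnormal{\bf{Tr}}_\omega(A)$ for every admissible $\omega$, and substituting the expression just obtained produces the claimed formula.

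The argument therefore involves no new estimates, and the only point deserving care --- the main (mild) obstacle --- is the bookkeeping that guarantees that \emph{the same} operator $A$ meets the hypotheses of both cited theorems. In particular one must note that the Dixmier trace, not a direct comparison of local and global principal symbols, is what bridges the two descriptions: $\textnormal{res}(A)$ is defined through the local homogeneous component $\sigma^A_{-n}(x,\xi)$, whereas the right-hand side is expressed through the global component $\sigma_{-n}(x,[\xi])$, and the two are linked only via the common value $\textnormal{\bf{Tr}}_\omega(A)$. Once the class identification of \cite{RuzhanskyTurunenWirth2014} is invoked so that $A$ is seen to lie in $\Psi^{-n}_{1,0}(G\times\widehat{G})$, this compatibility is automatic and the proof is completed by assembling Corollary~\ref{WRGroup:3:3} (or Theorem~\ref{Residue:Thm}) with Theorem~\ref{equivalence}.
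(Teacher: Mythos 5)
Your proposal is correct and follows exactly the paper's own (implicit) argument: the paper obtains Corollary~\ref{WRGroup:2} precisely by specialising Theorem~\ref{Residue:Thm} to the Laplace--Beltrami operator (so $Q=n$, giving Corollary~\ref{WRGroup:3:3}), invoking the class equivalence $\Psi^{-n}_{1,0}(G\times\widehat{G})=\Psi^{-n}_{1,0}(G;\textnormal{loc})$ of \cite{RuzhanskyTurunenWirth2014}, and then substituting the resulting Dixmier trace formula into Connes' identity $\textnormal{res}(A)=\textnormal{\bf{Tr}}_{\omega}(A)$ from Theorem~\ref{equivalence}. Your added bookkeeping remark, including the observation that $\sigma_{-Q}$ in the statement is to be read as $\sigma_{-n}$, is consistent with the paper's reasoning.
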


\section{Final remarks on  compact manifolds}\label{fremarks}
We now illustrate some connections between the determinant of operators in the setting of the  invariance  notion recalled in Section \ref{sec3} and  introduced in \cite{dr14a:fsymbsch},   and the problem of computing the  Dixmier trace of elliptic operators. We denote by $\Psi^{m}_{+e}(M)$ the class of positive elliptic classical pseudo-differential operators of order $m\in \mathbb{R}.$\\

If $A\in \Psi^{-{n}}_{+e}(M)$, then $A $ is not necessarily an operator of trace class. Indeed, the best information that we know about the Schwartz kernel $K_A$ of $A,$ is its logarithmic behaviour over the diagonal (see H\"ormander \cite{Hormander1985III}).  So, the determinant function $\textnormal{Det}(I+\lambda \cdot )$ is not a well defined functional on the class of operators $\Psi^{-{n}}_{+e}(M)$.  However, when studying the  regularised version 
\begin{equation}
    \textnormal{Det}_{\omega,A}(\lambda):=\lim_{p\rightarrow 1^{+}} \textnormal{Det}(I+\lambda A^p)^{p-1},
\end{equation} of $`\textnormal{Det}(I+\lambda A )$', we can observe that it  can be computed in terms of the global symbol $\sigma_{A,E}\equiv (\sigma_{A,E}(\ell))_{\ell\in \mathbb{N}}$ of $A,$ and of its Dixmier trace (and also, in view of the Connes equivalence in Theorem \ref{equivalence}, of its non-commutative residue).

\begin{rem}\label{remarkdix}
Let  $M$ be  a compact manifold of dimension $n$ without boundary endowed with a volume form $dx$, and let $E$ be  a positive elliptic classical pseudo-differential operator of order $\nu>0.$ Let us denote the eigenvalues of $E$ by $\{\lambda_j\}_{j\geq 0}.$
If $A:C^{\infty}(M)\rightarrow C^{\infty}(M) $ is invariant with respect to the subspaces $H_{j}=\textnormal{Ker}(E-\lambda_jI),$ in terms of the symbol $\sigma_{A,E}(\ell)$ of $A,$ let us define the functional  
\begin{equation}\label{eq1}
\Vert \sigma_{A,E}\Vert_{\mathcal{L}^{(1,\infty)}(\mathbb{N},E)}:=\frac{1}{n}\lim_{N\rightarrow\infty}\frac{ 1  }{\log N}\sum_{ l:(1+\lambda_{l})^{\frac{1}{\nu}}\leq N}\textnormal{\bf{Tr}}(|\sigma_{A,E}(l)|).
\end{equation}It was proved in \cite{cdc20}, that $A\in \mathcal{L}^{(1,\infty)}(L^2(M)) ,$ if and only if, $$\Vert \sigma_{A,E}\Vert_{\mathcal{L}^{(1,\infty)}(\mathbb{N},E)}<\infty.$$ In particular, if $A$ is positive (with respect to the inner product on $L^2(M)$), $$\textnormal{\bf{Tr}}_{\omega}(A)=\Vert \sigma_{A,E}\Vert_{\mathcal{L}^{(1,\infty)}(\mathbb{N},E)}.$$
\end{rem} In view of Lemma \ref{lemmadix} and Remark \ref{remarkdix} we have the following theorem.
\begin{thm}\label{thmmanifold} Let   $A\in \Psi^{-n}_{+e}(M)$ be a positive and elliptic operator. Then $A\in \mathcal{L}^{(1,\infty)}(L^2(M)),$ and 
\begin{equation}\label{dixmierdet}
     \textnormal{Det}_{\omega,A}(\lambda)= \exp\left(\textnormal{\bf{Tr}}_{\omega}(A)\lambda\right)=\exp\left(\lambda \Vert \sigma_{A,E}\Vert_{\mathcal{L}^{(1,\infty)}(\mathbb{N},E)}\right),
\end{equation} for $\lambda\in \mathbb{C}$ with  $|\lambda|$ small enough and any  $E\in \Psi^{-n}_{+e}(M)$ commuting with $A$.
In particular, $$\textnormal{Det}_{\omega,A}'(0):=\frac{d}{d\lambda}\textnormal{Det}_{\omega,A}(\lambda)|_{\lambda=0}=\textnormal{\bf{Tr}}_\omega(A)=\Vert \sigma_{A,E}\Vert_{\mathcal{L}^{(1,\infty)}(\mathbb{N},E)},$$
and 
\begin{equation}
    \textnormal{\bf{Det}}_\omega(I+A)=\exp(\Vert \sigma_{A,E}\Vert_{\mathcal{L}^{(1,\infty)}(\mathbb{N},E)}).
\end{equation}
\end{thm}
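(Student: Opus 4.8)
The plan is to obtain the statement directly from Lemma \ref{lemmadix} and Remark \ref{remarkdix}, once I have checked that $A$ meets the hypotheses of the former. First I would record that $A$ is a positive compact operator lying in the Dixmier ideal: since $A\in\Psi^{-n}_{+e}(M)$ is a classical pseudo-differential operator of order $-n=-\dim M$, it maps $L^2(M)$ into $H^{n}(M)$ and is therefore compact, and it belongs to $\mathcal{L}^{(1,\infty)}(L^2(M))$ by the discussion preceding Connes' Theorem \ref{equivalence}; its positivity is part of the hypothesis. This already gives the membership $A\in \mathcal{L}^{(1,\infty)}(L^2(M))$ asserted in the theorem.

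The key analytic input, and the one step I expect to require genuine pseudo-differential machinery, is that $A^{p}\in S_{1}(L^2(M))$ for every $1<p<\infty$. Because $A$ is positive and elliptic, its complex powers are well defined through the Seeley calculus and $A^{p}$ is again a classical pseudo-differential operator, now of order $-np$. Since $-np<-n$ for $p>1$, the operator $A^{p}$ has order strictly below $-\dim M$ and is thus of trace class. Consequently $A$ is a positive element of $\mathcal{L}^{(1,\infty)}_{adm}(L^2(M))$, and Lemma \ref{lemmadix} applies verbatim with $T=A$: it yields the analyticity of $\textnormal{Det}_{p,\omega,A}(\lambda)$ near the origin, the identity $\textnormal{Det}_{\omega,A}(\lambda)=\exp(\textnormal{\bf{Tr}}_{\omega}(A)\lambda)$ for $|\lambda|$ small, and $\textnormal{Det}_{\omega,A}'(0)=\textnormal{\bf{Tr}}_{\omega}(A)$.

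It then remains to express $\textnormal{\bf{Tr}}_{\omega}(A)$ symbolically. I would fix any $E\in\Psi^{-n}_{+e}(M)$ commuting with $A$; since $[A,E]=0$, the operator $A$ preserves each eigenspace $H_{j}=\textnormal{Ker}(E-\lambda_{j}I)$, so $A$ is invariant relative to the decomposition $\{H_{j}\}$ in the sense of Section \ref{sec3} and carries a well-defined matrix symbol $\sigma_{A,E}$. Remark \ref{remarkdix}, applied to the positive operator $A$, gives $\textnormal{\bf{Tr}}_{\omega}(A)=\Vert\sigma_{A,E}\Vert_{\mathcal{L}^{(1,\infty)}(\mathbb{N},E)}$; in particular the right-hand side is independent of the admissible choice of $E$, exactly as $\textnormal{\bf{Tr}}_{\omega}(A)$ is. Substituting this into the output of Lemma \ref{lemmadix} produces \eqref{dixmierdet} and the stated value of $\textnormal{Det}_{\omega,A}'(0)$.

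Finally, the regularised determinant follows by evaluating the analytic extension $\textnormal{Det}_{\omega,A}(z)=\exp(\textnormal{\bf{Tr}}_{\omega}(A)z)$ at $z=1$, so that $\textnormal{\bf{Det}}_{\omega}(I+A)=\textnormal{Det}_{\omega,A}(1)=\exp(\textnormal{\bf{Tr}}_{\omega}(A))=\exp(\Vert\sigma_{A,E}\Vert_{\mathcal{L}^{(1,\infty)}(\mathbb{N},E)})$. Thus the only nontrivial verification is the trace-class property of the complex powers $A^{p}$ for $p>1$; everything else is a direct citation of Lemma \ref{lemmadix}, Remark \ref{remarkdix}, and Connes' Theorem \ref{equivalence}.
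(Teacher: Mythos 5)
Your proposal is correct and follows essentially the same route as the paper: verify the hypotheses of Lemma \ref{lemmadix} (membership in $\mathcal{L}^{(1,\infty)}(L^2(M))$ via Connes, and $A^{p}\in\Psi^{-np}_{+e}(M)\subset S_{1}(L^2(M))$ for $p>1$ via the functional calculus for positive elliptic operators), then apply Lemma \ref{lemmadix} for the determinant identity and Remark \ref{remarkdix} for the symbolic expression of $\textnormal{\bf{Tr}}_{\omega}(A)$. The only difference is that you spell out details the paper leaves implicit, namely that $[A,E]=0$ forces $A$ to preserve the eigenspaces $H_{j}=\textnormal{Ker}(E-\lambda_{j}I)$ so that $\sigma_{A,E}$ is well defined, and that the formula for $\textnormal{\bf{Det}}_{\omega}(I+A)$ follows by evaluating the analytic extension at $z=1$.
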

\begin{proof}
Let us prove that $A$ satisfies the hypothesis in Lemma \ref{lemmadix}. Indeed, the fact that $A\in \Psi^{-n}_{+}(M),$ implies that $A\in \mathcal{L}^{(1,\infty)}(L^2(M))$ (see  \cite{Connes94}). The functional calculus for elliptic operators implies that $A^{p}\in \Psi^{-np}_{+}(M),$  and consequently, for $1<p<\infty ,$ $\textnormal{\bf{Tr}}_{\omega}(A^{p})=0.$ Indeed, the condition $A^{p}\in \Psi^{-np}_{+}(M),$ implies that $A^p\in S_{1}(L^2(M))$ which implies that its Dixmier functional vanishes  (see  \cite{Connes94}). In view of Lemma  \ref{lemmadix} we have proved the left hand side of \eqref{dixmierdet}. The right-hand side of  \eqref{dixmierdet} is a consequence of Remark \ref{remarkdix}. 
\end{proof}

\end{document}